\documentclass[11pt]{article}
\synctex=1
\usepackage[latin1]{inputenc}
\usepackage{latexsym}
\usepackage{amsmath,amssymb,amsfonts,amsthm}
\usepackage[pdftex]{graphicx}
\usepackage[pdftex,colorlinks=true,linkcolor=blue,citecolor=blue,urlcolor=blue]{hyperref}
\usepackage{geometry}
\geometry{a4paper, margin=2cm}
\usepackage{xcolor}
\usepackage{xspace}
\usepackage{upgreek}
\newcommand{\blackboard}[1]{\mathbf{#1}} 
\newcommand{\R}{\blackboard{R}}
\newcommand{\N}{\blackboard{N}}
\renewcommand{\P}{\blackboard{P}}
\newcommand{\E}{\blackboard{E}}
\renewcommand{\L}{\blackboard{L}}

\newcommand{\calig}[1]{\mathcal{#1}}
\newcommand{\A}{\calig{A}}

\DeclareMathOperator{\var}{var}

\renewcommand{\d}[1]{\textup{d} #1}

\newcommand{\delimleft}[2]{\ifcase #1\or
    \bigl#2\or %
    \Bigl#2\or %
    \biggl#2\or %
    \Biggl#2\or %
    \left#2\fi}
\newcommand{\delimright}[2]{\ifcase #1\or
    \bigr#2\or %
    \Bigr#2\or %
    \biggr#2\or %
    \Biggr#2\or %
    \right#2\fi}
\newcommand{\pa}[2][5]{
    \delimleft{#1}{(} #2 \delimright{#1}{)}}
\newcommand{\br}[2][5]{
    \delimleft{#1}{[} #2 \delimright{#1}{]}}
\newcommand{\ac}[2][1]{
    \delimleft{#1}{\{} #2 \delimright{#1}{\}}}

\newcommand{\psca}[2][1]{
    {\delimleft{#1}{\langle} #2%
    \delimright{#1}{\rangle}}}
\newcommand{\abs}[2][1]{
    {\delimleft{#1}{\lvert} #2%
    \delimright{#1}{\rvert}}}
\newcommand{\norm}[2][1]{
    {\delimleft{#1}{\lVert} #2%
    \delimright{#1}{\rVert}}}
\newcommand{\normLp}[3][1]{
    \norm[#1]{#3}_{\scriptscriptstyle #2}}


\newcommand{\ind}[2][1]{
    \boldsymbol{1}_{\ac[#1]{#2}}}

\newcommand{\esp}[2][5]{
    \E \br[#1]{#2}}
\newcommand{\esps}[3][5]{
    \E_{#2} \br[#1]{#3}}
\newcommand{\espc}[3][5]{
    \E \br[#1]{#2%
    \delimleft{#1}{.} \vphantom{#2}\vphantom{#3}%
    \delimright{#1}{\vert} #3}}


\newcommand{\ie}{\textit{i.e. }}
\renewcommand{\le}{\leqslant}
\renewcommand{\ge}{\geqslant}
\newcommand{\ds}{\displaystyle}

\usepackage{subcaption}

\numberwithin{table}{section}
\usepackage{booktabs}

\newcommand{\iid}{\emph{i.i.d.}\xspace}

\DeclareMathOperator*{\argmin}{argmin}

\newtheorem{Theorem}{Theorem}[section]
\newtheorem{Definition}[Theorem]{Definition}
\newtheorem{Proposition}[Theorem]{Proposition}

\newtheorem{Lemma}[Theorem]{Lemma}

\newtheorem{Remark}[Theorem]{Remark}

\DeclareMathOperator{\Cost}{Cost}
\DeclareMathOperator{\cost}{\upkappa}
\DeclareMathOperator{\varf}{\upnu}
\DeclareMathOperator{\perf}{\upphi}
\DeclareMathOperator{\bias}{\upmu}
\DeclareMathOperator{\mT}{\mathbf{T}}
\DeclareMathOperator{\W}{\mathbf{W}}
\DeclareMathOperator{\w}{\mathbf{w}}
\DeclareMathOperator{\Hr}{\calig{H}}

\newcommand{\e}[1]{\ensuremath{\cdot {\scriptstyle 10}^{\scriptscriptstyle #1}}}
\newcommand{\MLMC}{MLMC\xspace}
\newcommand{\MLRR}{ML2R\xspace}
\newcommand{\espY}[2][5]{\esps[#1]{Y\!\!}{#2}}

\author{Vincent Lemaire\footnote{Laboratoire de Probabilit\'es et Mod\`eles Al\'eatoires, UMR 7599, UPMC Paris 6 (Sorbonne Universit\'e), 
E-mail: \url{vincent.lemaire@upmc.fr}}, Gilles Pag\`es\footnote{Laboratoire de Probabilit\'es et Mod\`eles Al\'eatoires, UMR 7599, UPMC Paris 6 (Sorbonne Universit\'e), E-mail: \url{gilles.pages@upmc.fr}}
}

\begin{document}
\title{Multilevel Richardson-Romberg Extrapolation}
\maketitle

\begin{abstract}
    We propose and analyze a Multilevel Richardson-Romberg (\MLRR) estimator which combines the higher order bias cancellation of the Multistep Richardson-Romberg method introduced in~\cite{PAG0} and the variance control resulting from Multilevel Monte Carlo (\MLMC) paradigm (see~\cite{GILES, Hein}). Thus, in standard frameworks like discretization schemes of diffusion processes, the root mean squared error (RMSE) $\varepsilon > 0$ can be achieved with our \MLRR estimator with a global complexity of $\varepsilon^{-2} \log(1/\varepsilon)$ instead of $\varepsilon^{-2} (\log(1/\varepsilon))^2$ with the standard \MLMC method, at least when the weak error $\esp{Y_h}-\esp{Y_0}$ of the biased implemented estimator $Y_h$  can be expanded at any order in $h$ and $\normLp{2}{Y_h - Y_0} = O(h^{\frac{1}{2}})$. The \MLRR estimator is then halfway between a regular \MLMC and a virtual unbiased Monte Carlo. When the strong error $\normLp{2}{Y_h - Y_0} = O(h^{\frac{\beta}{2}})$, $\beta < 1$, the gain of \MLRR over \MLMC becomes even more striking. We carry out numerical simulations to compare these estimators in two settings: vanilla and path-dependent option pricing by Monte Carlo simulation and the less classical Nested Monte Carlo simulation.
\end{abstract}

\noindent {\em Keywords:} Multilevel Monte Carlo estimator; Richardson-Romberg extrapolation; Multistep; Euler scheme; Nested Monte Carlo method; Option pricing.

\medskip
\noindent {\em MSC 2010}: primary 65C05, secondary 65C30, 62P05.

\section{Introduction}
The aim of this paper is to combine the multilevel Monte Carlo estimator introduced by S. Heinrich in~\cite{Hein} and developed by M. Giles in~\cite{GILES}  (see also~\cite{KEBAIER} for the statistical Romberg approach) and the (consistent) Multistep Richardson-Romberg extrapolation (see~\cite{PAG0}) in order to minimize the simulation cost of a  quantity of interest $I_0 = \esp{Y_0}$ when the random variable $Y_0$ cannot be simulated at a reasonable cost (typically a functional of a generic multidimensional diffusion process or a conditional expectation). Both methods rely on the existence of a family of random variables $Y_h$, $h>0$, which strongly approximate $Y_0$ as $h$ goes to 0 whose bias $\esp{Y_h} - \esp{Y_0}$ can be expanded as a polynomial function of $h$ (or $h^{\alpha}$, $\alpha>0$). 

However, the two methods suffer from opposite but significant drawbacks: the multilevel Monte Carlo estimator does not fully take advantage of the existence of such an expansion beyond the first order whereas the Multistep Richardson-Romberg extrapolation induces an increase of the variance of the resulting estimator. Let us be more precise. 

Consider a probability space $(\Omega, \A, \P)$ and suppose that we have a family $(Y_h)_{h\ge 0}$ of real valued random variables in $\L^2(\P)$, associated to $Y_0$ supposed to be non degenerate, and satisfying $\ds \lim_{h \to 0} \normLp{2}{Y_h - Y_0} = 0$ where $h$ takes values in $\Hr = \ac{\mathbf h /n, \, n \ge 1}$ for a fixed $\mathbf{h} \in (0,+\infty)$.
Usually, the random variable $Y_h$ appears as a functional of a time discretization scheme of step $h$ or from an inner approximation in a nested Monte Carlo simulation. The parameter $h$ is called the {\em bias parameter} in what follows. Furthermore, we assume that, for every admissible $h \!\in \Hr$, the random variable $Y_h$ can be simulated at a reasonable computational cost whereas $Y_0$ can not. 

We aim at computing an as good as possible approximation of $I_0 = \esp{Y_0}$ by a Monte Carlo type simulation. The starting point is of course to fix a parameter $h \in \Hr$ to consider a standard Monte Carlo estimator based on $Y_h$ to compute $I_0$. So, let $(Y^{(k)}_h)_{k \ge 1}$ be a sequence of independent copies of $Y_h$ and the estimator $I^{(h)}_N = \frac{1}{N} \sum_{k=1}^N Y^{(k)}_h$. By the strong law of numbers and the central limit theorem we have a control of the renormalized \emph{statistical error} $\sqrt{N} (I^{(h)}_N - \esp{Y_h})$ which behaves as a centered Gaussian with variance $\var(Y_h)$. On the other hand, there is a \emph{bias error} due to the approximation of $I_0$ by $I_h = \esp{Y_h}$. This bias error is also known as the \emph{weak error} when $Y_{h}$ is a functional of the time discretization scheme of a stochastic differential equation with step $h$. In many applications, the bias error can be expanded as 
\begin{equation}\label{eq:R0}
	\esp{Y_h} - \esp{Y_0} = c_1 h^{\alpha} + \dots + c_{_R} h^{\alpha R} + o(h^{\alpha R})
\end{equation}
where $\alpha$ is a positive real parameter (usually $\alpha = \frac 12, 1$ or $2$). In this paper, we fully take into account this error expansion and provide a very efficient estimator which can be viewed as a coupling between an \MLMC estimator and a Multistep Richardson-Romberg extrapolation. Multilevel methods require the additional strong convergence rate assumption $\normLp{2}{Y_h - Y_0}^2 = \mathcal{O}(h^{\beta})$ involving a parameter $\beta \in (0,1)$. About the need of this assumption, we refer to~\cite{Hutz} for some counterexamples. However, the recent paper~\cite{Belo2} shows how      this strong error assumption can be relaxed in some situations.

We first present a brief description of the original \MLMC estimator as described in~\cite{GILES}. The main idea is to use the following telescopic summation with depth $L \ge 2$,  
\begin{equation*}
	\esp{Y_{h_L}} = \esp{Y_h} + \sum_{j = 2}^L \esp{Y_{h_j} - Y_{h_{j-1}}}
\end{equation*}
where $(h_j)_{j=1,\dots,L}$ is a geometrically decreasing sequence of different bias parameters $h_j = M^{-(j-1)} h$, $M\!\ge2$. For each level $j \in \ac{1,\dots,L}$, the computation of $\esp{Y_{h_j}-Y_{h_{j-1}}}$ is performed by a standard Monte Carlo procedure. The key point is that, at each level $j$, we consider a random sample of $(Y_{h_{j-1}}, Y_{h_j})$ of size  $N_j = \lceil N q_j \rceil$, where $q=(q_1,\dots,q_{_L}) \in \calig{S}_+(L) = \ac{q \!\in (0,1)^L, \sum_{j=1}^L q_j = 1}$ ($L$-dimensional simplex),  with  in mind that  the marginals $Y_{h_{j-1}}$ and $Y_{h_j}$ are highly correlated since $\lim_{h \to 0}\normLp{2}{Y_h-Y_0} = 0$ (see Section~\ref{sec:correlation} for details).
More precisely, we consider $L$ copies of the biased family denoted $Y^{(j)} = (Y^{(j)}_h)_{h \in \Hr}$, $j \in \ac{1,\dots,L}$ attached to \emph{independent} random copies $Y^{(j)}_0$ of $Y_0$. The \MLMC estimator then writes 
\begin{equation}\label{MLR-estimator}
	I^N_{h,L,q} = \frac{1}{N_1} \sum_{k=1}^{N_1} Y_{h}^{(1),k} + \sum_{j=2}^L \frac{1}{N_j} \sum_{k=1}^{N_j} \pa{Y_{h_j}^{(j),k} - Y_{h_{j-1}}^{(j),k}}
\end{equation}
where $(Y^{(j),k})_{k\ge 1}$, $j=1, \dots, L$ are independent sequences of independent copies of $Y^{(j)}$ and $N_1,\dots,N_L$ are positive integers. The analysis of the computational complexity and the study of the bias--variance structure of this estimator will appear as a particular case of a generalized multilevel framework that we will introduce and analyze in Section~\ref{sec:MLRR}. This framework, following the original \MLMC, highly relies on the combination of a strong rate of approximation of $Y_0$ by $Y_h$ and a first order control of weak error $\esp{Y_h}-\esp{Y_0}$. This \MLMC estimator has been extensively applied to various fields of numerical probability (jump diffusions~\cite{DER1, DER2}, American options~\cite{Belo}, computational statistics and more general numerical analysis problems (high dimensional parabolic SPDEs, see~\cite{BLS}, etc). For more references, we refer to the web page \url{http://people.maths.ox.ac.uk/gilesm/mlmc\_community.html} and the references therein.

On the other hand, the Multistep Richardson-Romberg extrapolation takes advantage of the full expansion~\eqref{eq:R0}. Let us first recall the one-step Richardson-Romberg Monte Carlo estimator. We still consider  a  biased family denoted $Y = (Y_h)_{h \in \Hr}$ attached to the random variable $Y_0$. The one-step Richardson-Romberg Monte Carlo estimator then writes  
\begin{equation*}
	I^N_{h, \frac{h}{2}} = \frac{1}{N} \sum_{k=1}^N \pa{2 Y_{\frac{h}{2}}^{k} - Y_h^{k}}
\end{equation*}
where $(Y^{k})_{k\ge 1}$ is a sequence of independent copies of $Y$. It is clear that this linear combination of Monte Carlo estimators satisfies the following bias error expansion (of order 2 in $h$) 
\begin{equation*}
	\esp{2 Y_{\frac{h}{2}} - Y_h} - \esp{Y_0} = -\frac{c_2}{2} h^2 + o(h^2).
\end{equation*}
Moreover, the asymptotic variance of this estimator satisfies $ \var(I^N_{h,\frac{h}{2}})=\var(2Y_{\frac{h}{2}}-Y_h)/N\approx\var(Y_0)/N$ which is the same as the crude Monte Carlo estimator. Then, it is natural  to design  a linear estimator with bias error   in $h^3$ by linearly combining  $Y_h$, $Y_{\frac{h}{2}}$ and $Y_{\frac{h}{3}}$  and so on. Such an  extension called Multistep Richardson-Romberg extrapolation for Monte Carlo estimator has been introduced and extensively investigated in~\cite{PAG0} in the framework of discretization of diffusion processes. More details are given in Section~\ref{sec:MSRR}. 

The aim of this paper is to show that an appropriate combination of the \MLMC estimator and the Multistep Richardson-Romberg extrapolation outperforms the standard \MLMC. More precisely, we will see in Section \ref{sec:MLRR} that an implementation of the Multilevel Richardson Romberg estimator (\MLRR) turns out to be a {\em weighted version of the \MLMC} and writes  
\begin{equation}\label{intro:MLRR-estimator}
	I^N_{h,R,q} = \frac{1}{N_1} \sum_{k=1}^{N_1} Y_{h}^{(1),k} + \sum_{j=2}^R \frac{\W_j}{N_j} \sum_{k=1}^{N_j} \pa{Y_{h_j}^{(j),k} - Y_{h_{j-1}}^{(j),k}}
\end{equation}
where $R \ge 2$ is the {\em depth level} -- similar to $L$ in~\eqref{MLR-estimator} -- and $(Y^{(j),k})_{k\ge 1}$ are like in~\eqref{MLR-estimator}. We denote by $n_j = M^{j-1}$ the $j$--th \emph{refiner coefficient} of the initial bias parameter $h \in \Hr$ where  the integer $M \ge 2$ is called  the \emph{root} of the refiners. A strong feature of our approach comes from the fact that the weights $(\W_k)_{k = 2, \dots, R}$ are explicit and only depend on $\alpha$ (given by~\eqref{eq:R0}), $M$ and $R$. In practice these \MLRR weights read $\W_j = \sum_{i = j}^R \w_i$ where $\w_i$ are given further on by~\eqref{eq:weights}. These derivative weights $(\w_i)_{i \in \{1,\dots,R\}}$ have been introduced in~\cite{PAG0} to kill the successive bias terms that appear in the expansion~\eqref{eq:R0}.

To compare the two methods \MLMC and \MLRR, we consider the following optimization problem: minimizing the global simulation cost (of one estimator) subject to the constraint that the resulting $\L^2$--error or root mean squared error (RMSE) must be lower than a prescribed $\varepsilon > 0$. We solve the problem step by step for both estimators (in fact for a more general unifying class of estimators). In the first stage, we minimize the \emph{effort} of the estimator  (product of its variance by its complexity) to optimally dispatch the $N_j$ across all levels (it can be viewed as a stratification procedure). Doing so, we are able to specify the initial bias parameter $h$ and the depth level $R$ as functions of $\varepsilon$ and the structural  parameters ($\alpha$, $\beta$, $V_1$, $\var(Y_0)$). A light preprocessing makes possible to optimize the choice of the root $M$ of the refiners. Basically (see Theorem~\ref{prop:asympReps}), the numerical cost of the \MLRR estimator implemented with these optimal parameters (depending on $\varepsilon$) and denoted $\Cost(\text{\MLRR})$ satisfies 
\begin{equation*}
	\Cost(\text{\MLRR}) \lesssim K(\alpha, \beta, M)  v(\beta, \varepsilon)
\end{equation*}
where $f \lesssim g$ means that $\limsup_{\varepsilon \to 0} f(\varepsilon) / g(\varepsilon) \le 1$, $K(\alpha, \beta, M)$ is an explicit bound (see~\eqref{eq:K_MLRR} in Theorem~\ref{prop:asympReps}) and
$$v(\beta, \varepsilon) = \ds \begin{cases}
	\varepsilon^{-2} & \text{if $\beta > 1$}, \\
	\varepsilon^{-2} \log(1/\varepsilon) & \text{if $\beta = 1$}, \\
	\varepsilon^{-2} e^{\frac{1-\beta}{\sqrt{\alpha}} \sqrt{2 \log(1/\varepsilon) \log(M)}} & \text{if $\beta < 1$}.
\end{cases}$$
Note that $e^{\frac{1-\beta}{\sqrt{\alpha}} \sqrt{2 \log(1/\varepsilon) \log(M)}} = o(\varepsilon^{-\eta})$ for all $\eta > 0$.
As first established in~\cite{GILES}, we prove likewise that the optimal numerical cost of the \MLMC estimator denoted $\Cost(\text{\MLMC})$ satisfies a similar result with $v(\beta,\varepsilon) = \varepsilon^{-2}$ if $\beta >1 $, $v(\beta, \varepsilon) = \varepsilon^{-2} \log(1/\varepsilon)^{2}$ if $\beta = 1$ and $v(\beta, \varepsilon) = \varepsilon^{-2 - \frac{1-\beta}{\alpha}}$ if $\beta < 1$. In the case $\beta = 1$, the gain of $\log(1/\varepsilon)$ may look as a minor improvement but, beyond the fact that it is halfway to a virtual unbiased simulation, this improvement is  obtained with respect to an already   tremendously efficient method to speed up crude Monte Carlo simulation. In fact,  as emphasized in our numerical experiments (see Section~\ref{sec:NumEx}), this may lead to a significant reduction factor for  CPU time, $e.g.$  when $\alpha<1$: pricing a Black-Scholes Lookback Call option  with  a prescribed quadratic error $\varepsilon = 2^{-8}$, yields  a reduction factor of $3.5$ in favor of \MLRR. When $\beta < 1$, the  above  theoretical reduction factor asymptotically  goes to $+\infty$ as $\varepsilon$ goes to 0 in a very steep way.   Thus, the reduction in CPU time factor reaches, {\em mutatis mutandis},  $22$ for a Black-Scholes Up\&Out Barrier  call option for which $\beta = \frac 12$ (still using a regular  Euler scheme {\em without Brownian bridge}). When compared on the basis of the resulting empirical RMSE, these factors become even larger (approximately $48$ and $61$ respectively). In fact, it confirms  that $\beta<1$ is the setting where  our \MLRR  estimator is the most powerful compared  to regular \MLMC. Additional simulations are available on the web page \url{https://simulations.lpma-paris.fr/multilevel/}.

The paper is organized as follows: in Section~\ref{sec:prelim}, we propose a general parametrized  framework to formalize the optimization of  a {\em biased} Monte Carlo simulation based on the $\L^2$--error minimization.
The crude Monte Carlo estimator and the Multistep Richardson-Romberg estimator appear as the first two examples, allowing us to make precise few notations as well as our main assumptions. In Section~\ref{sec:MLRR}, we first introduce the extended family of multilevel estimators attached to design matrices~$\mT$. Among them, we  specify  in more details our proposal: the new \MLRR estimator, but also the standard \MLMC estimator. 
Two typical fields of application are presented in Section~\ref{sec:appli}: the  time discretization of stochastic processes (Euler scheme)  and the nested Monte Carlo method, for which a weak expansion of the error at any order is established in the regular case.  In Section~\ref{sec:NumEx}, we present and comment on numerical experiments carried out in the above two fields.

\medskip
\noindent {\sc Notations:} 
$\bullet$  Let $\N^* = \{1,2, \ldots\}$ denote the set of positive integers and $\N=\N^*\cup\{0\}$.
	
\noindent $\bullet$  If $\underline n=(n_1,\ldots,n_{_R})\!\in (\N^*)^R$, $|\underline n|= n_1+\cdots+n_{_R}$ and $\displaystyle \underline n!=\prod_{1\le i\le R} n_i$.

\noindent $\bullet$  Let $(e_1,\ldots,e_{_R})$ denote the canonical basis of $\R^R$ (viewed as a vector space of column vectors). Thus $e_i = (\delta_{ij})_{1\le j\le R}$ where $\delta_{ij}$ stands for the classical Kronecker symbol.
	
\noindent $\bullet$  $\psca{.,.}$ denotes the canonical inner product on $\R^R$.
	
\noindent $\bullet$  For every $x\!\in \R_+ = [0, +\infty)$, $\lceil x \rceil$ denotes the unique $n \in \N^*$ satisfying $n-1 < x \le n$.

%
\noindent $\bullet$  If $(a_n)_{n \in \N}$ and $(b_n)_{n \in \N}$ are two sequences of real numbers, $a_n \sim b_n$ if $a_n = \varepsilon_n b_n$ with $\lim_n \varepsilon_n = 1$, $a_n = O(b_n)$ if $(\varepsilon_n)_{n \in \N}$ is bounded and $a_n = o(b_n)$ if $\lim_n \varepsilon_n = 0$.

\noindent $\bullet$ $\var\pa{X}$ and $\sigma(X)$ denote the variance and the standard deviation of a random variable $X$ respectively.

\section{Preliminaries}\label{sec:prelim}
\subsection{Mixing variance and complexity (effort)}\label{sec:perf}
We first introduce some notations and recall basic facts on (possibly biased) {\em linear} estimators. We consider a family of linear statistical estimator $(I^N_{\uppi})_{N \ge 1}$ of $I_0 \in \R$ where $\uppi$ lies in a parameter set $\Uppi \subset \R^d$. By linear, we mean, on the one hand, that, for every integer $N\ge 1$, 
\[
\esp{I^N_{\uppi}}= \esp{I^1_{\uppi}} 
\]
and, on the other hand, that the numerical cost $\Cost(I^N_{\uppi})$  induced by  the simulation of $I^N_{\uppi}$ is given by 
\[
	\Cost(I^N_{\uppi}) = N \cost(\uppi)
\]
where $\cost(\uppi)= \Cost(I^1_{\uppi}) $ is the cost of a single simulation or {\em unitary complexity}. 
 
 We also assume that our estimator is of {\em Monte Carlo type} in the sense that its variance is {\em inverse linear} in the size $N$ of the simulation:
 \[
	 \var(I^N_{\uppi})= \frac{\varf(\uppi)}{N}
\]
where $\varf(\uppi) = \var(I^1_{\uppi})$ denotes the variance of one simulation. For example, in a crude  biased Monte Carlo $\uppi = h \in \Hr$, in a Multilevel Monte Carlo $\uppi = (h, R, q)\! \in \Hr \times \N^* \times \calig{S}_+$ and in the Multistep Monte Carlo $\uppi = (h, R) \in \Hr \times \N^*$. 
 
We are looking for the ``best'' estimator in this family $\ac{(I^N_{\uppi})_{N \ge 1}, \uppi \in \Uppi}$ \emph{i.e.} the estimator minimizing the computational cost for a given error $\varepsilon > 0$. In the sequel, we will often consider $N$ as a continuous variable lying in $\R_+$. A natural choice for measuring the random error $I^N_{\uppi}-I_0$ is to consider the $\L^2$--error or root mean squared error (RMSE) $\sqrt{\esp[1]{(I^N_{\uppi} - I_0)^2}} = \normLp{2}{I^N_{\uppi} - I_0}$. Our aim is to minimize the cost of the simulation for a given target error, say $\varepsilon>0$. This generic problem reads
\begin{equation} \label{probleme_generique}
	\big(\uppi(\varepsilon), N(\varepsilon)\big) = \ds \argmin_{\|I^N_{\uppi} - I_0\|_2 \le \varepsilon} \Cost(I^N_{\uppi}).
\end{equation}
In order to solve  this minimization problem, we introduce the notion of \emph{effort} $\perf \pa[1]{\uppi}$ of a linear Monte Carlo type estimator $I^N_{\uppi}$.

\begin{Definition} The \emph{effort} of the estimator $I^N_{\uppi}$ is defined for every $\uppi\!\in \Uppi$ by 
\begin{equation} \label{def_perf_index}
	\perf \pa[1]{\uppi} = \varf(\uppi) \cost(\uppi).
\end{equation}
\end{Definition} 
By definition of a linear estimator $I^N_{\uppi}$, we have that 
$$
\perf \pa[1]{\uppi} = \varf(\uppi) \cost(\uppi) = \var(I^N_{\uppi}) \Cost(I^N_{\uppi}) = \var(I^1_{\uppi}) \Cost(I^1_{\uppi})
$$ 
for every integer $N\ge 1$, so that we obtain the fundamental relation 
\begin{equation}
	\Cost(I^N_{\uppi}) = N \frac{\perf \pa[1]{\uppi}} {\upnu(\uppi)}.
\end{equation}
\begin{itemize}
	\item
		If the estimators $(I^N_{\uppi})_{N \ge 1}$ are {\em unbiased} \ie  $\esp[1]{I^N_{\uppi}} = I_0$ for every $\uppi \!\in \Uppi$, then $\esp{(I^N_{\uppi} - I_0)^2} =\|I^N_{\uppi} - I_0\|_2^2= \var \pa[1]{I^N_{\uppi}} = \frac{1}{N} \varf(\uppi)$. The solution of the generic problem~\eqref{probleme_generique} then reads 
\begin{equation} \label{sol_sans_biais}
	\uppi(\varepsilon) = \uppi^*=  \argmin_{\uppi \in \Uppi} \perf \pa[1]{\uppi}, \quad N(\varepsilon) = \frac{\upnu(\uppi^*)}{\varepsilon^2}
	=\frac{\upphi(\uppi^*)}{\cost(\uppi^*)\varepsilon^2}.
\end{equation}
Consequently, the most performing estimator $I^N_{\uppi}$ is characterized as a minimizer of the effort $\perf\pa[1]{\uppi}$ as defined above (and the parameter $\uppi$ does not depend on $\varepsilon$).

	\item
		When the estimators $(I^N_{\uppi})_{N \ge 1}$, $\uppi\!\in \Uppi$, are {\em biased}, the mean squared error writes 
\begin{equation*}
	\esp[1]{(I^N_{\uppi} - I_0)^2} = \bias^2(\uppi) + \frac{\varf(\uppi)}{N}
\end{equation*}
where 
$$
\bias(\uppi) = \esp[1]{I^N_{\uppi}} - I_0 = \esp[1]{I^1_{\uppi}} - I_0
$$ 
denotes the bias  (which does not depend on $N$). Using that $\varf(\uppi) = N \pa[1]{\normLp{2}{I^N_{\uppi}-I_0}^2 - \bias(\uppi)^2}$, the solution of the generic problem~\eqref{probleme_generique} reads 
\begin{equation} \label{sol_avec_biais}
	\uppi(\varepsilon) = \argmin_{\uppi \in \Uppi,\, \,|\!\bias(\uppi)| < \varepsilon} \pa{\frac{\perf \pa[1]{\uppi}}{\varepsilon^2 - \bias^2(\uppi)}}, 
	\quad N(\varepsilon) = \frac{\upnu(\uppi(\varepsilon))}{\varepsilon^2 - \bias^2(\uppi(\varepsilon))}
	= \frac{\upphi(\uppi(\varepsilon))}{\cost(\uppi(\varepsilon))(\varepsilon^2-\bias^2(\uppi(\varepsilon)))}.
\end{equation}
\end{itemize}

\subsection{Assumptions on weak and strong approximation errors} \label{framework}
We come back to the framework described in the introduction: let $(Y_h)_{h \in \Hr}$ be a family of real valued random variables associated to a random variable $Y_0 \in \L^2$. The index set $\Hr$  is a  set of {\em bias} parameters (in fact {\em representative} of a bias) defined by $\Hr = \ac{\mathbf h/n,\,n \ge 1}$ for a fixed $\mathbf h \in (0, +\infty)$.
All random variables $Y_h$ are defined on the same  probability space $(\Omega,\A, \P)$. The family satisfies  two assumptions which formalize the strong and weak rates of approximation of $Y_0$ by $Y_h$ when $h\to 0$ in $\Hr$. These assumptions are the basement of multilevel simulation methods (see~\cite{GILES, Hein}):
\begin{description}
	\item[Bias error expansion (weak error rate):]
\begin{equation} \tag{$WE_{\alpha,\bar R}$} \label{weak_error}
	\hskip -1cm\exists \,\alpha > 0, \bar R \ge 1, \quad \esp{Y_h} = \esp{Y_0} + \sum_{k=1}^{\bar R} c_k h^{\alpha k} + h^{ \alpha\bar R} \eta_{_{\bar R}}(h),\quad \lim_{h\to 0}\eta_{_{\bar R}}(h)=0,
\end{equation}
where $c_k$, $k=1,\dots,\bar R$,  are real coefficients and $\eta_{_{\bar R}}$ is a real valued function defined on $\Hr$.

	\item[Strong approximation error assumption:]
\begin{equation} \tag{$SE_\beta$} \label{strong_error}
	\exists\, \beta > 0, \; V_1 \in \R_+, \quad \normLp{2}{Y_h - Y_0}^2 = \esp{\abs{Y_h - Y_0}^2} \le V_1 h^\beta.
\end{equation}
\end{description}
Note that the parameters $\alpha$, $\beta$ and $\bar R$ are structural parameters which only depend on the family $(Y_h)_{h \in \Hr}$.  When $(Y_h)_{h\in \Hr}$ satisfies~\eqref{weak_error} for every integer $\bar R$, we will say that $(WE_{\alpha,\infty})$ is fulfilled.  Such a family is said to be {\em admissible} (at level $\bar R$ with parameters $\beta$ and $\alpha$). 

Note that consistency of weak error $(WE_{\alpha,\bar R})$ (when $c_1 \neq 0$) and  strong  error $(SE_\beta)$ implies that $\beta\le 2\alpha$. 

 In the sequel we will consider that the depth parameter $R \!\in \ac{2,\dots, \bar R}$ so that  it always satisfies~\eqref{weak_error}. 
 This parameter $R$ corresponds to the depth level $L$ used in the multilevel literature.

In what follows, we will use the following ratio 
\begin{equation}\label{eq:theta}
	\theta = \sqrt{\frac{V_1}{\var(Y_0)}}
\end{equation}
which relates the quadratic rate of convergence of $Y_h$ to $Y_0$ and the variance of $Y_0$. 

\begin{Remark}[Alternative strong approximation error assumptions]\label{rq:strong_assumption} Throughout the paper, whenever  $(WE_{\alpha, 1})$ holds (with $c_1\neq 0$), \eqref{strong_error} can be replaced  by one of the following assumptions
 \begin{equation} \tag{$V\!ar_\beta$} \label{var_error}
	\exists\, \beta > 0, \; V_1 \in \R_+, \quad \var\pa{Y_h - Y_0} \le V_1 h^{\beta}, \;h\!\in \Hr.
\end{equation}
\begin{equation} \tag{$V\!ar'_\beta$}\label{var_error'}
	\exists\, \beta > 0, \; V_1 \in \R_+, \quad \var\pa{Y_h - Y_{h'}} \le V_1 \abs{h-h'}^{\beta}, \;h,\,h'\!\in \Hr,
\end{equation}
The aim being is to reduce the numerical value of $V_1$ in view of practical implementation for~\eqref{var_error} and  the estimate of the variance of refined levels for~\eqref{var_error'}. 
Note that, still if $(WE_{\alpha, 1})$ holds,  $(\eqref{var_error}   \hbox{ and }   \alpha\le 2\beta)$ $\Longleftrightarrow$ \eqref{strong_error} (the first one with a lower $V_1$ if $\alpha=2\beta$) and  \eqref{var_error'} $\Rightarrow$ \eqref{var_error}  by letting $h'\to 0$ (the converse being false).
 
    The theoretical results obtained using one of these two assumptions (combined with~\eqref{weak_error}) may
  induce slight modifications in the exposition of the results of  this paper (in particular Theorem~\ref{thm:phaseI}). These variants are briefly discussed in  Remark~\ref{rq:strong_assum2} and practical guidelines for the estimation  of $V_1$ are discussed in Section~\ref{sec:practi} (Practitioner's corner).
\end{Remark}

All estimators considered in this work are based on independent copies $(Y^{(j)}_h)_{h\in \Hr }$,   (attached to  random variables $Y^{(j)}_0$) of $(Y_h)_{h\in \Hr}$,  $j=1,\dots,R$. All random variables are supposed to be defined on the same probability space $(\Omega,{\cal A}, \P)$. Note that, since the above properties~\eqref{strong_error} and~\eqref{weak_error}, $\bar R \ge 1$, only depend on the distribution of $(Y_h)_{h\in \Hr}$, all these copies will also satisfy these two properties.

We associate to the family $(Y_h)_{h\in \Hr}$  and a given bias parameter $h\!\in \Hr$, the $\R^R$-valued random vector
$$
Y_{h, \underline n} = (Y_h, Y_{\frac{h}{n_2}}, \dots, Y_{\frac{h}{n_{_R}}})
$$ 
where the $R$-tuple  of integers  $\underline n: =(n_1,n_2,\dots,n_{_R})\!\in \N^R$, called {\em refiners} in the sequel,  satisfy
\begin{equation*}
	 n_1 = 1 < n_2 < \cdots < n_{_R}.
\end{equation*}
One defines likewise $Y^{(j)}_{h,\underline n}$ for the (independent) copies of $Y_{h, \underline n}$. 
 
 \medskip
 \noindent $\rhd$ {\em Specification of the refiners}: In most applications, we will choose refiners $n_i$ as $n_i= M^{i-1}$ where $M \ge 2$. Indeed, this is the standard choice in the regular Multilevel Monte Carlo method as described in~\cite{GILES}. Other choices like $n_i=i$ are possible (see below the original Multistep Richardson-Romberg estimator in Section~\ref{sec:MSRR}).

\subsection{Crude Monte Carlo estimator}
In our formalism a crude Monte Carlo simulation and its cost can be described as follows.
\begin{Proposition}\label{pro:MC} Assume~\eqref{weak_error} with $c_1\neq 0$ and~\eqref{strong_error} with $\bar R \ge 1$. The Monte Carlo estimator of $\esp{Y_0}$ defined by 
\begin{equation*}
	\forall N \ge 1, \; h \in \Hr, \quad
	\bar Y_{h}^{N} = \frac 1 N \sum_{k=1}^N  Y^{k}_{h}
\end{equation*}
where $\pa{Y_{h}^{k}}_{k \ge 1}$ is an \iid sequence of copies of $Y_{h}$, satisfies 
\begin{equation*}
	\bias(h) = c_1 h^{\alpha} (1+\eta_1(h)), \quad \cost(h) = \frac{1}{h}, 
	\quad \upphi(h) = \frac{\var(Y_h)}{h}
\end{equation*}
and, for a prescribed $\L^2$-error $\varepsilon > 0$, the optimal parameters $h^*(\varepsilon)$ and $N^*(\varepsilon)$ solution to~\eqref{probleme_generique} are given by  
\begin{equation}\label{eq:NCrude}
	h^*(\varepsilon) = (1 + 2 \alpha)^{-\frac{1}{2 \alpha}} \pa{\frac{\varepsilon}{|c_1|}}^{\frac{1}{\alpha}}, \quad  
	N^*(\varepsilon) = \pa{1+\frac{1}{2\alpha}} \frac{\var(Y_0)(1 + \theta (h^*(\varepsilon))^{\frac{\beta}{2}})^2}{\varepsilon^2}.
\end{equation}
Furthermore, we have 
\begin{equation*}
	\limsup_{\varepsilon \to 0} \varepsilon^{2 + \frac{1}{\alpha}} \min_{
		\substack{h \in \Hr, \\
		|\bias(h)| < \varepsilon}
	} \Cost(\bar Y^N_h) \le |c_1|^{\frac{1}{\alpha}} \pa{1 + \frac{1}{2 \alpha}}(1+2 \alpha)^{\frac{1}{2 \alpha}} \var(Y_0).
\end{equation*}
\end{Proposition}

\begin{proof} The proof is postponed to Appendix B.
\end{proof}

We refer to the seminal paper~\cite{Duffie} for more details on practical implementation of this estimator.

\begin{Remark} \label{rq:CrudeMC} For crude Monte Carlo simulation, Assumption~\eqref{strong_error} can be replaced by $Y_h\stackrel{L^2}{\to} Y_0$   (without rate), provided $\var(Y_0)(1 + \theta (h^*(\varepsilon))^{\frac{\beta}{2}})^2$ is replaced by $\var\big(Y_{h^*(\varepsilon)}\big) $ in~\eqref{eq:NCrude}. 
 \end{Remark}

\subsection{Background on Multistep Richardson-Romberg extrapolation} \label{sec:MSRR}
The so-called {\em  Multistep Richardson-Romberg estimator} has been introduced in~\cite{PAG0} in the framework of Brownian diffusions. It relies on  $R$ (refined) Euler schemes $\bar X^{(\frac{h}{n_i})}$, $1 \le i \le R$, defined on a finite interval $[0,T]$ ($T > 0$),  where the bias parameter $h=\frac{T}{n}$, $n \ge 1$. In that case, the refiners are set as $n_i=i$, $i=1,\ldots,R$, (in order to produce a better control of both the variance  and the complexity for the proposed estimator, see Remark~\ref{rem2.5} below). The main results are obtained when all the schemes are {\em consistent} \ie such that all the Brownian increments are generated from the same underlying Brownian motion. As a consequence, under standard smoothness assumptions on the coefficients of the diffusion, the family $Y_h = \bar X^{(h)}$, $h\!\in \Hr=\{\frac{T}{n}, \, n\ge 1\}$, makes up an admissible family in the above sense, as will be seen further on in more details.

For a refiner  vector $(n_1,n_2,\dots,n_{_R})$ we define the {\em weight} vector $\w = (\w_1, \dots, \w_R)$ as the unique solution to the Vandermonde system $V \!\w = e_1$ where
\begin{equation}\label{eq:Vdmde}
	V = V(1,n_2^{-\alpha},\dots,n_{_R}^{-\alpha}) = \left( \begin{array}{cccc}
		1 & 1 & \cdots & 1 \\
			1 & n_2^{-\alpha} & \cdots & n_{_R}^{-\alpha} \\
		\vdots & \vdots & \cdots & \vdots \\
		1 & n_2^{-\alpha(R-1)} & \cdots & n_{_R}^{-\alpha(R-1)} \\
	\end{array} \right).
\end{equation}
The solution $\w$ of the system has a closed form given by Cramer's rule (see Lemma~\ref{lem:VdM} in Appendix~A):
\begin{equation}
	\label{eq:alpha}
	\forall i \in \ac{1,\dots,R},\quad \w_i = \frac{(-1)^{R-i} n_i^{\alpha(R-1)}}{\ds \prod_{1 \le j < i}(n^{\alpha}_i-n^{\alpha}_j)  \prod_{i < j \le R}(n^{\alpha}_j-n^{\alpha}_i) }.
\end{equation}
 We also derive the following identity of interest
\begin{equation}\label{alphatilde}
\widetilde \w_{_{R+1}}:= \sum_{i=1}^R\frac{\w_i}{n_i^{\alpha R}}=\frac{(-1)^{R-1}}{\underline n!^{\alpha}},
\end{equation}
which will be used in \eqref{eq:Multistepbias} and \eqref{eq:etaRn} to control the residual bias.

Note that all coefficients $(\w_i)_{1 \le i \le R}$ depend on the depth $R$ of the combined extrapolations. For the standard choices $n_i = i$ or $n_i = M^{i-1}$, $i=1,\ldots,R$, we obtain the following expressions: 
\begin{equation} \label{eq:weights}
	\w_i = \begin{cases}
		\ds \frac{(-1)^{R-i} i^{\alpha R}}{\prod_{j=1}^{i-1} (i^{\alpha}-j^{\alpha})\prod_{i+1}^{R} (j^{\alpha}-i^{\alpha})} & \text{ if $n_j = j, \; j\in\ac{1,\dots,R}$ }, \\\\
		\ds \frac{(-1)^{R-i} M^{-\frac{\alpha}{2}(R-i)(R-i+1)}}{\prod_{j=1}^{i-1}(1-M^{-j\alpha }) 
	\prod_{j=1}^{R-i}(1-M^{-j\alpha})} & \text{ if $n_j = M^{j-1}, \; j\in\ac{1,\dots,R}$ }. \\
	\end{cases}
\end{equation}

Note that when $\alpha=1$ and $n_i=i$, then $\displaystyle \w_i =  \frac{(-1)^{R-i} i^{R}}{i!(R-i)!}$, $i=1,\ldots,R$.

Assume now~\eqref{weak_error} and $R\in\ac{1,\dots,\bar R}$. In order to design an estimator which kills the bias up to order $R$, we  focus on the random variable resulting from  the linear combination $\ds \psca{\w, Y_{h,\underline n}} = \sum_{i=1}^R \w_i Y_{\frac{h}{n_i}}$. 

The first equation of the Vandermonde system $V  \w = e_1$, namely $(V\w)_1= \sum_{r=1}^R \w_r = 1$, implies 
$$
\lim_{h \to 0} \esp{\psca{\w, Y_{h,\underline n}}}  = \esp{Y_0}.
$$
 Furthermore, when expanding the (weak) error, one checks that the  other $R-1$  equations satisfied by the weight vector $\w$ make  all  terms  in front of the $c_{r-1}$, $r = 2, \dots, R$ vanish. Finally, we obtain 
\begin{align}\label{eq:Multistepbias}
  \esp[1]{ \psca{\w, Y_{h,\underline n}}}& = \esp{Y_0} +\sum_{r=2}^{R} c_{r-1} h^{\alpha (r-1)} \big(V\w)_{r}  + c_{_R} \widetilde \w_{_{R+1}}h^{\alpha R}\big(1+ \eta_{R, \underline n}(h)\big)\\
 & = \esp{Y_0} + c_{_R} \widetilde \w_{_{R+1}}h^{\alpha R}\big(1+ \eta_{R, \underline n}(h)\big)
\end{align}
where 
\begin{equation}\label{eq:etaRn}
\eta_{R, \underline n}(h)= \frac{1}{c_{_R} \widetilde \w_{_{R+1}}} \sum_{r=1}^R \frac{\w_r}{n_r^{\alpha R}}\eta_{_R}\Big(\frac{h}{n_r}\Big)\to 0\quad\mbox{ as }\quad h\to 0.
\end{equation}

\begin{Proposition}\label{pro:multiRR} 
	Assume~\eqref{weak_error} and~\eqref{strong_error}. Let $R \in \ac{2, \dots, \bar R}$ be such that $c_{_R}\neq 0$.  The Multistep Richardson-Romberg estimator of $\esp{Y_0}$ defined by 
\begin{equation}\label{MSR-estimator}
	\forall N \ge 1, \; h \in \Hr, \quad \bar Y_{h, \underline n}^{N} = \frac 1 N \sum_{k=1}^N  \psca{\w, Y^{k}_{h,\underline n}}=  \psca{\w, \frac 1 N \sum_{k=1}^NY^{k}_{h,\underline n}}
\end{equation}
where $\pa{Y_{h, \underline n}^{k}}_{k \ge 1}$ is an \iid sequence of copies of $Y_{h, \underline n}$, satisfies 
\begin{equation*}
	\bias(h) = (-1)^{R-1} c_{_R} \pa{\frac{h^R}{\underline n!}}^{\alpha} \pa{1 +\eta_{R,\underline n}(h)},
	\quad \cost(h) = \frac{|\underline n|}{h}, 
	\quad \upphi(h) = \frac{|\underline n| \var(\psca{\w, Y_{h, \underline n}})}{h}
\end{equation*}
and, for a prescribed $\L^2$-error $\varepsilon > 0$, the optimal parameters $h^*(\varepsilon)$ and $N^*(\varepsilon)$ solution of~\eqref{probleme_generique} are
\begin{equation*}
	h^*(\varepsilon) = (1 + 2 \alpha R)^{-\frac{1}{2 \alpha R}} \pa{\frac{\varepsilon}{|c_{_R}|}}^{\frac{1}{\alpha R}} {\underline n !^{\frac{1}{R}}}, \quad  
	N^*(\varepsilon) = \pa{1+\frac{1}{2\alpha R}} \frac{\var(Y_0)(1 + \theta (h^*(\varepsilon))^{\frac{\beta}{2}})^2}{\varepsilon^2}.
\end{equation*}
Furthermore,
\begin{equation} \label{pro:multiRR:result}
	\inf_{
		\substack{h \in \Hr \\
		|\bias(h)| < \varepsilon}
	} \Cost(\bar Y^N_h) 
	\sim \pa{\frac{(1+2\alpha R)^{1+\frac{1}{2\alpha R}}}{2 \alpha R}} \frac{|c_{_R}|^{\frac{1}{\alpha R}} \abs{\underline n} \var(Y_0)}
	{\underline n !^{\frac{1}{R}} \varepsilon^{2 + \frac{1}{\alpha R}} } \quad \text{as } \varepsilon \to 0.
\end{equation}
\end{Proposition}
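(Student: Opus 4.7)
The plan is to specialise the biased-estimator optimisation recipe~\eqref{sol_avec_biais} to the multistep Richardson--Romberg case, leveraging the weighted bias identity already derived in the paragraphs preceding the statement.

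\textbf{Bias, cost, effort.} Equation~\eqref{eq:Multistepbias} together with the identity~\eqref{alphatilde} for $\widetilde \w_{_{R+1}}$ immediately gives $\bias(h)=(-1)^{R-1}c_{_R}(h^R/\underline n!)^\alpha(1+\eta_{R,\underline n}(h))$, with $\eta_{R,\underline n}(h)\to 0$ by~\eqref{eq:etaRn}. A single simulation of $\psca{\w,Y_{h,\underline n}}$ requires drawing $Y_{h/n_i}$ for each $i=1,\dots,R$; under the convention $\cost(h')=1/h'$ used throughout (cf.\ Proposition~\ref{pro:MC}), this yields $\cost(h)=\sum_{i=1}^R n_i/h=|\underline n|/h$, hence $\upphi(h)=\varf(h)\cost(h)=|\underline n|\var(\psca{\w,Y_{h,\underline n}})/h$.

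\textbf{Variance control.} To pin down the $\var(Y_0)$ constant hidden in $\varf(h)$, I would combine Minkowski's inequality with the identity $\sum_i \w_i=1$ and~\eqref{strong_error}, starting from
\[
\normLp{2}{\psca{\w,Y_{h,\underline n}}-Y_0}=\normLp{2}{\sum_{i=1}^R\w_i(Y_{h/n_i}-Y_0)}\le \sqrt{V_1\,h^\beta}\sum_{i=1}^R|\w_i|\,n_i^{-\beta/2},
\]
and deducing by the triangle inequality that $\sqrt{\varf(h)}\le\sqrt{\var(Y_0)}\bigl(1+\theta h^{\beta/2}\sum_i|\w_i|n_i^{-\beta/2}\bigr)$; in particular $\varf(h)\to\var(Y_0)$ as $h\to 0$, and squaring this bound is what produces the factor $(1+\theta (h^*(\varepsilon))^{\beta/2})^2$ in the announced $N^*(\varepsilon)$.

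\textbf{Optimum in $(h,N)$.} Setting $B^2=c_{_R}^2/\underline n!^{2\alpha}$ so that $\bias^2(h)\sim B^2h^{2\alpha R}$ and $\upphi(h)\sim|\underline n|\var(Y_0)/h$, the stationarity condition $\frac{d}{dh}\log\bigl[\upphi(h)/(\varepsilon^2-\bias^2(h))\bigr]=0$ arising from~\eqref{sol_avec_biais} simplifies algebraically to $(1+2\alpha R)B^2h^{2\alpha R}=\varepsilon^2$, which solves into the stated $h^*(\varepsilon)$. The matching residual $\varepsilon^2-\bias^2(h^*(\varepsilon))=\tfrac{2\alpha R}{1+2\alpha R}\varepsilon^2$ then yields $N^*(\varepsilon)=\varf(h^*(\varepsilon))/\bigl(\tfrac{2\alpha R}{1+2\alpha R}\varepsilon^2\bigr)$, giving the announced expression after invoking the bound of the previous step. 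Finally, writing $\Cost=N^*(\varepsilon)|\underline n|/h^*(\varepsilon)$ and using the identity $1+\tfrac{1}{2\alpha R}=\tfrac{1+2\alpha R}{2\alpha R}$ to collect the two powers of $(1+2\alpha R)$ into $(1+2\alpha R)^{1+1/(2\alpha R)}/(2\alpha R)$ delivers~\eqref{pro:multiRR:result}, with the asymptotic equivalence ``$\sim$'' absorbing both the $\eta_{R,\underline n}$ correction in the bias and the $o(1)$ correction in $\varf(h^*(\varepsilon))\to\var(Y_0)$.

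The main obstacle is the variance-control step: the exact form $(1+\theta h^{\beta/2})^2$ in the statement requires~\eqref{strong_error} plus some care in absorbing the weight-dependent sum $\sum_i|\w_i|n_i^{-\beta/2}$, and one must also verify that the interior critical point found above is the global minimiser of a function that otherwise explodes as $|\bias(h)|\uparrow\varepsilon$ from below and as $h\to 0$ through $\cost(h)$. Once~\eqref{eq:Multistepbias} and the variance bound are in hand, the remaining manipulations are routine one-variable calculus.
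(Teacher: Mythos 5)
Your proposal is correct and follows essentially the same route as the paper: the paper's Appendix~B simply identifies the multistep estimator with the allocation matrix $\mT=(\w,\mathbf{0},\dots,\mathbf{0})$ and trivial stratification $q=(1,0,\dots,0)$, notes $\upphi(h)=\var(\psca{\w,Y_{h,\underline n}})\,|\underline n|/h\sim \var(Y_0)|\underline n|/h$, and invokes the optimization argument of Theorem~\ref{thm:phaseII} — which is exactly the one-variable calculus you carry out explicitly from~\eqref{sol_avec_biais}, \eqref{eq:Multistepbias} and the Minkowski variance bound. The caveats you flag (absorbing the weight sum $\sum_i|\w_i|n_i^{-\beta/2}$ and justifying the global minimality / the matching lower bound for the ``$\sim$'') are precisely the points the paper handles inside the proof of Theorem~\ref{thm:phaseII}, so nothing essential is missing.
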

\begin{proof}The proof is postponed to Appendix~B (but takes advantage of the formalism developed in the next section).
\end{proof}

\begin{Remark}\label{rem2.5} $\bullet$ As for~\eqref{strong_error}, Remark~\ref{rq:CrudeMC} still applies 

\smallskip
\noindent $\bullet$ In this approach the bias reduction suffers from an increase of the simulation cost by the $|\underline n|$ factor which appears in the numerator of~\eqref{pro:multiRR:result}. The choice of the refiners made in~\cite{PAG0}, namely $n_i = i$, $i=1,\ldots,R$, is justified  by the control  of the ratio $\frac{\abs{\underline n}}{\underline n !^{\frac 1R}}$: for such a choice,  it behaves linearly,  like $\frac{e}{2} (R+1)$,  for large values of $R$ whereas with $n_i=M^{i-1}$ it goes to infinity geometrically in  $O(M^{\frac{R-1}{2}})$.
\end{Remark}

\section{A paradigm for Multilevel simulation methods}\label{sec:MLRR}
\subsection{General framework}
\subsubsection*{Multilevel decomposition}
In spite of Proposition~\ref{pro:multiRR} which shows that the numerical cost of the Multistep method behaves like $\varepsilon^{2 + \frac{1}{\alpha R}}$, one observes in practice that the increase of the ratio $\frac{|\underline n|}{\underline n !}$ (when $R$ grows) in front of $\var (Y_0)$ in~\eqref{pro:multiRR:result} reduces the impact of the bias reduction. 

An idea is then to introduce independent linear combination of copies of $\bar Y_{h, \underline n}$ to reduce the variance taking advantage of the basic fact that if $X$ and $X'$ are independent with the same distribution then $\esp{\frac{X+X'}{2}} = \esp{X}$ and $\var(\frac{X+X'}{2})= \frac 12 \var(X)$, combined  with an appropriate allocation policy to control the complexity of the resulting estimator.
So, let us consider now $R$ \emph{independent} copies  $(Y_{h,\underline n}^{(j)})$, $j=1,\dots,R$, of the random vector $Y_{h,\underline n}$ and the  linear combination 
  \begin{equation*}
	  \sum_{j = 1}^R \psca{\mT^j, Y_{h,\underline n}^{(j)}} = \sum_{i,j=1}^R \mT_{i}^j Y_{\frac{h}{n_i}}^{(j)}
\end{equation*}
where $\mT = [\mT^1\dots\mT^R]$ is an $R\times R$ matrix with column vectors $\mT^j\!\in \R^R$ satisfying the constraint 
$$
\sum_{1\le i,j\le R} \mT^j_i =1.
$$
As emphasized further on, we will also need that each column vector $\mT^j$, $j \in {2, \dots, R}$, has zero sum. In turn, this suggests to introduce the  notion of Multilevel estimator as a family of ``stratified'' estimators of $\esp{Y_0}$ attached to  the random vectors $\psca{\mT^j, Y_{h,\underline n}^{(j)}}$, $j=1,\ldots,R$. This leads to the following definitions. 

\begin{Definition}[Design matrix] Let $R \ge 2$. An $R\times R$-matrix  $\mT = [\mT^1\dots\mT^R]$ is an $R$-level design matrix if 
\begin{equation} \label{Abeta} 
		 \psca{\mT^j, \mathbf{1}} = \sum_{i = 1}^R \mT_i^j = 0, \; j = 2,\ldots,R.
\end{equation}
\end{Definition}

Note that such a design matrix always satisfies $\displaystyle \sum_{i,j = 1}^d \mT_i^j =1$.

\begin{Definition}[General Multilevel estimator] Let $R \ge 2$ and let $\pa[1]{Y_{h,\underline n}^{(j),k}}_{k \ge 1}$ be an \iid sequence of copies of $Y_{h,\underline n}^{(j)}$.
	A Multilevel estimator of depth $R$ attached to an allocation policy $q=(q_1,\ldots,q_{_R})$ with  $ q_j >0$, $j=1,\ldots,R$, and $\sum_j q_j = 1$ and a design matrix $\mT$, is defined for every integer $N \ge 1$ and $h \in \Hr$ by 
\begin{equation}\label{EstimMatr}
	\bar Y^{N,q}_{h,\underline n} = \sum_{j=1}^R \frac{1}{N_j} \sum_{k=1}^{ N_j }\psca{\mT^j, Y_{h,\underline n}^{(j),k}}
\end{equation}
where for all $j \in \ac{1,\dots,R}$, $N_j = \lceil q_j N \rceil$ (allocated budget to compute $\esp{\psca{\mT^j, Y_{h,\underline n}^{(j)}}}$).

\begin{itemize}
	\item If furthermore the $R$-level design matrix $\mT$ satisfies
		\begin{equation}\label{eq:desMLMC}
			\mT^1 = e_1 \quad \text{and}\quad \sum_{j=1}^R \mT^j = e_{_R}, 
\end{equation}
the estimator is called a {\bf Multilevel Monte Carlo (\MLMC) estimator} of order $R$.

\item If, furthermore,  the $R$-level design matrix $\mT$  satisfies
	\begin{equation}\label{eq:desML2R}
\mT^1 = e_1 \quad \text{and}\quad \sum_{j=1}^R\mT^j = \w, \text{ where  $\w$ is  the unique solution to~(\ref{eq:alpha}),}
\end{equation}
the estimator is called a {\bf Multilevel Richardson-Romberg (\MLRR) estimator} of order $R$.
\end{itemize}
\end{Definition}
\begin{Remark}
\begin{itemize}
	\item Note that the assumption $\mT^1 = e_1$ is  not really necessary. It simply allows for more concise formulas in what follows.  
	\item In this framework, denoting by $\mathbf{0}$ the null column vector of $\R^R$, the crude Monte Carlo is associated to the design matrix $\mT = \pa{e_1, \mathbf{0}, \dots, \mathbf{0}}$ and the Multistep Richardson-Romberg estimator is associated to $\mT = \pa{\w, \mathbf{0}, \dots, \mathbf{0}}$.
	\item Introducing such   general families of  matrices  will allow us to justify the  final choice of design matrices.  To reduce the complexity of the resulting estimators leads us to choose  as sparse as possible design matrices satisfying the   constraints~\eqref{eq:desMLMC} or~\eqref{eq:desML2R}.
\end{itemize}
\end{Remark}

Within the abstract framework of a parametrized Monte Carlo simulation described in Section~\ref{sec:perf}, the structure parameter $\uppi$  of the multilevel estimators $(\bar Y^{N,q}_{h,\underline n})_{N \ge 1}$ defined by~\eqref{EstimMatr} is  
\begin{equation*}
	\uppi=(\uppi_0, q)\quad\mbox{where}\quad 
	\begin{cases} 
	q = (q_1,\ldots, q_{_R})\!\in (0,1)^R,\quad \sum_i q_i=1, \\
		\uppi_0 = (h, n_1,\ldots,n_{_R}, R, \mT) \in \Uppi_0.
	\end{cases}
\end{equation*}

\subsubsection*{Cost, complexity and effort of a Multilevel estimator}
In order to  minimize the effort $\upphi(\uppi)$ of the estimator~\eqref{EstimMatr}, let us first evaluate  its unitary computational complexity.
For a simulation size $N$, the numerical cost induced by the estimators $Y^{N, q}_{h, \underline n}$, $N \ge 1$, reads
\begin{equation}
 \Cost(\bar Y^{N,q}_{h,\underline n}) = 
 \sum_{j=1}^R N_j \sum_{i=1}^R \frac{1}{h} n_i\mbox{\bf 1}_{\{\mT^j_i\neq 0\}}
 = N \cost(\uppi)
\end{equation}
where  the unitary complexity  $\cost(\uppi)$ is given by  
 \begin{equation}\label{eq:unitcomplexity}
	 \cost(\uppi) = \frac{1}{h} \sum_{j=1}^R q_j \sum_{i=1}^R n_i\mbox{\bf 1}_{\{\mT^j_i\neq 0\}}.
 \end{equation}
At this stage, it is clear that the design matrix $\mT$  must be {\em as sparse as} possible  to minimize        $ \cost(\uppi) $. 
However, it may happen, like for nested Monte Carlo (see~Section~\ref{sec:nested} for details), that the unitary complexity writes
\begin{equation} \label{eq:nested_compl}
	 \cost(\uppi) = \frac{1}{h} \sum_{j=1}^R q_j \max_{1\le i\le R} \big(n_i\mbox{\bf 1}_{\{\mT^j_i\neq 0\}}\big).
\end{equation}

The variance of the Multilevel estimator is  inverse linear in $N$ (hence of Monte Carlo type) since, using the independence of the levels, we get
\begin{align*}
    \var\pa{\bar Y^{N,q}_{h,\underline n}} &= \sum_{j=1}^R \frac{1}{N_j^2} \var\pa{\sum_{k=1}^{N_j}\psca{\mT^j, Y^{(j),k}_{h, \underline n}}}\\
    &= \frac{1}{N} \sum_{j=1}^R \frac{1}{q_j} \var\pa{\psca{\mT^j, Y^{(j)}_{h, \underline n}}}
\end{align*}
so that the effort of such a Multilevel estimator is given by 
\begin{equation} \label{def:upphi} 
	\upphi(\uppi) = \upnu(\uppi) \cost(\uppi) = \pa{\sum_{j=1}^R \frac{1}{q_j} \var\pa{\psca{\mT^j, Y^{(j)}_{h, \underline n}}}} \cost(\uppi).
\end{equation}

\subsubsection*{Bias error of a Multilevel estimator}
We now establish the bias error in this general framework. The proposition below about the bias error  follows straightforwardly from the weak error expansion~\eqref{weak_error} and the definition of a design matrix $\mT$. 
\begin{Proposition} \label{prop_bias} Assume~\eqref{weak_error}.
	\begin{enumerate}
		\item[$(a)$] {\em \MLRR estimator:} Let $R\!\in \ac{2,\ldots, \bar R}$ be the depth of an \MLRR estimator. For any admissible allocation policy~$q=(q_1,\ldots,q_{_R})$, the bias error reads  
	\begin{equation}\label{eq:biasmultiRR}
		\bias(\uppi_0,q) =(-1)^{R-1} c_{_R} \pa{\frac{h^R}{\underline n!}}^{\alpha} \pa{1 +\eta_{R,\underline n}(h)}
	\end{equation}
	where $\ds \eta_{R, \underline n}(h)= (-1)^{R-1} \underline n!^{\alpha} \sum_{r=1}^R \frac{\w_r}{n_r^{\alpha R}}\eta_{_R}\Big(\frac{h}{n_r}\Big) $ (see~\eqref{eq:etaRn}) with $\eta_{_R}$ defined in~\eqref{weak_error}.
\item[$(b)$] {\em \MLMC estimator:} Let $R \ge 2$ be the depth of an \MLMC estimator. For any admissible allocation policy~$q\!=\!(q_1,\ldots,q_{_R})$, the bias error reads 
	\begin{equation}\label{eq:biasmultiMC}
		\bias(\uppi_0,q) = c_1 \pa{\frac{h}{n_{_R}}}^{\alpha} \pa[2]{1 + \eta_1\pa[1]{\frac{h}{n_{_R}}}}	
	\end{equation}
	with $\eta_1$ defined in~\eqref{weak_error}.	
	\end{enumerate}
\end{Proposition}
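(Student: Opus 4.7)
The plan is to compute the bias directly from the definition of the estimator, isolate the row-sums of the allocation matrix, and then plug the weak expansion~\eqref{weak_error} into each term of the resulting deterministic linear combination.

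First I would observe that, because each inner empirical mean $\frac{1}{N_j}\sum_{k}\psca{\mT^j, Y^{(j),k}_{h,\underline n}}$ is an average of i.i.d.\ copies, $\esp[1]{\bar Y^{N,q}_{h,\underline n}} = \sum_{j=1}^R \psca{\mT^j, \esp{Y_{h,\underline n}}}$. Interchanging the order of summation and writing $w_i := \sum_{j=1}^R \mT^j_i$ for the $i$-th row sum gives the key identity
\begin{equation*}
\esp[1]{\bar Y^{N,q}_{h,\underline n}} - \esp{Y_0}
= \sum_{i=1}^R w_i\,\esp{Y_{h/n_i}} - \esp{Y_0}.
\end{equation*}
Note that the stratification weights $q$ play no role in this computation, since the empirical means in~\eqref{EstimMatr} are unbiased; this is why the bias depends only on $(\uppi_0,q)$ through $\uppi_0$.

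Next I would substitute the weak error expansion~\eqref{weak_error} (at order $R\in\{2,\dots,\bar R\}$) into each $\esp{Y_{h/n_i}}$, yielding
\begin{equation*}
\esp[1]{\bar Y^{N,q}_{h,\underline n}} - \esp{Y_0}
= \Bigl(\sum_i w_i - 1\Bigr)\esp{Y_0}
+ \sum_{k=1}^{R} c_k h^{\alpha k} \sum_{i=1}^R \frac{w_i}{n_i^{\alpha k}}
+ h^{\alpha R}\sum_{i=1}^R \frac{w_i}{n_i^{\alpha R}}\,\eta_{_R}\!\bigl(\tfrac{h}{n_i}\bigr).
\end{equation*}
For the \MLMC case, the defining conditions $\mT^1=e_1$ and $\sum_j \mT^j = e_{_R}$ give $w_i=\delta_{iR}$, so only the $i=R$ terms survive and one immediately reads off~\eqref{eq:biasmultiMC} with $\eta_1$ inherited from~\eqref{weak_error}. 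For the \MLRR case, the condition $\sum_j \mT^j=\w$ with $V\w=e_1$ is precisely what makes $\sum_i \w_i = 1$ (killing the $\esp{Y_0}$ term) and $\sum_i \w_i/n_i^{\alpha k}=0$ for $k=1,\dots,R-1$ (killing the first $R-1$ bias terms).

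The last step is to identify the coefficient of the surviving $h^{\alpha R}$--term using the auxiliary identity~\eqref{alphatilde}, namely $\widetilde\w_{_{R+1}}=\sum_i \w_i/n_i^{\alpha R}=(-1)^{R-1}/\underline n!^{\alpha}$, and to collect the error remainder as a finite linear combination of $\eta_{_R}(h/n_i)$'s; since each such term vanishes as $h\to 0$, so does $\eta_{R,\underline n}(h)$, matching~\eqref{eq:etaRn}. Factoring out $(-1)^{R-1}c_{_R}(h^R/\underline n!)^{\alpha}$ produces exactly~\eqref{eq:biasmultiRR}. No step is really a ``main obstacle'': once the row-sum reformulation is in place, the argument is an algebraic consequence of the Vandermonde condition, which is the raison d'être of the weights $\w$; the only point to be careful about is that the constraint~\eqref{Abeta}, $\psca{\mT^j,\mathbf{1}}=0$ for $j\ge 2$, combined with $\mT^1=e_1$, guarantees $\sum_i w_i = 1$ automatically in the \MLMC case and is consistent with $\sum_i \w_i=1$ (first Vandermonde row) in the \MLRR case, so the $\esp{Y_0}$ contribution cancels in both settings.
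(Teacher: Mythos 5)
Your proposal is correct and follows essentially the same route as the paper: the paper's ``Multilevel decomposition'' paragraph already reduces the bias to $\sum_{i}\bigl(\sum_j \mT^j_i\bigr)\bigl(\esp[1]{Y_{h/n_i}}-\esp{Y_0}\bigr)$, and Proposition~\ref{prop_bias} is then obtained exactly as you do, by reading off the row sums ($e_{_R}$ for \MLMC, $\w$ for \MLRR) and invoking the Vandermonde conditions together with the identity~\eqref{alphatilde}. The only caveat is a normalization already present in the paper itself (whether the remainder $\eta_{R,\underline n}$, resp.\ $\eta_1$, absorbs the factor $1/c_{_R}$, resp.\ $1/c_1$), which your write-up handles consistently.
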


\subsubsection*{Toward the optimal parameters}
The optimization problem~\eqref{sol_avec_biais} is not attainable directly, so we decompose it into two successive  steps: 
\begin{description}
	\item[Step 1:]  Minimization of the effort $\upphi$ over all allocation policies $q={(q_j)}_{1\le j\le R}$ (as a function of  a fixed bias parameter $h$). In practice, we will minimize an upper-bound~$\bar \upphi$ of the effort $\upphi$ 
\begin{equation}\label{def:optPhi}
	q^* = \argmin_{q \in \calig{S}_+(R)} \bar \upphi(\uppi_0, q), \quad \text{where} \quad \upphi(\uppi) \le \bar \upphi(\uppi), \quad \text{and} \quad \upphi^*(\uppi_0) = \upphi(\uppi_0, q^*).
\end{equation}
This phase is solved in~Theorem~\ref{thm:phaseI} below (an explicit expression for $\bar \upphi$ is provided in~\eqref{def:bar_upphi}). The quantity $\upphi^*(\uppi_0)$  is called the {\em optimally allocated effort} (with a slight abuse of terminology since $\bar \upphi$ is only an upper bound of $\upphi$).

	\item[Step 2:] Minimization of the resulting cost as a function of the remaining parameters $\uppi_0$ for a prescribed $\L^2$--error $\varepsilon > 0$ (and specification of the resulting size of the simulation and its cost): 
		\begin{equation*}
			\uppi_0(\varepsilon) = \argmin_{\substack{\uppi_0 \in \Uppi_0\\ \,|\!\bias(\uppi_0, q^*)|< \varepsilon}} \pa{\frac{\upphi^*(\uppi_0)}{\varepsilon^2 - \bias^2(\uppi_0, q^*)}}, \quad
			N(\uppi_0(\varepsilon)) = \frac{\upphi^*(\uppi_0(\varepsilon))}{\cost(\uppi_0(\varepsilon), q^*) (\varepsilon^2 -  \bias^2(\uppi_0, q^*))}. 
\end{equation*}
It will be  solved asymptotically when $\varepsilon$ goes to $0$  in two sub-steps. First we consider a fixed depth~$R$ (with  general refiners) in Proposition~\ref{thm:phaseII}  which provides a closed form for $h^*(\varepsilon)$. Secondly,  we let $R$  vary  as a function of $\varepsilon$ (only for geometric refiners $n_i=M^{i-1}$). This leads to the main result of the paper Theorem~\ref{prop:asympReps} which yields a closed  form for  $R^*(\varepsilon)$  (and $N^*(\varepsilon)$) and the various asymptotics for the cost,  depending on $\beta$ and other structural parameters. 
\end{description}

\subsection{Optimally allocated effort (Step 1)}
Throughout our investigations on these estimators, we will make extensive use  of the following lemma which is a straightforward consequence of Schwarz's Inequality including its  equality case.
\begin{Lemma} \label{lemme_CS}
	For all $j \in \ac{1,\dots,R}$, let $a_j > 0$, $b_j > 0$ and $q_j > 0$ such that $\ds \sum_{j=1}^R q_j = 1$. Then 
	\begin{equation*}
		\pa{\sum_{j=1}^R \frac{a_j}{q_j}} \pa{\sum_{j=1}^R b_j q_j} \ge \pa{\sum_{j=1}^R \sqrt{a_j b_j}}^2
	\end{equation*}
	and equality holds if and only if $q_j = \mu \sqrt{a_j b_j^{-1}}$, $j=1,\dots,R$, with $\mu = \pa[2]{\sum_{k=1}^R \sqrt{a_k b_k^{-1}}}^{-1}$.
\end{Lemma}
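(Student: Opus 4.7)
The plan is to recognize this lemma as a direct instance of the Cauchy--Schwarz inequality in $\R^R$. I would introduce the vectors $u=(u_j)_{1\le j\le R}$ and $v=(v_j)_{1\le j\le R}$ given by
\begin{equation*}
u_j=\sqrt{\frac{a_j}{q_j}},\qquad v_j=\sqrt{b_jq_j},
\end{equation*}
which are well defined since $a_j,b_j,q_j>0$. Then $\|u\|^2=\sum_j a_j/q_j$, $\|v\|^2=\sum_j b_jq_j$ and $\langle u,v\rangle=\sum_j\sqrt{a_jb_j}$, so that the announced inequality is exactly $\langle u,v\rangle^2\le\|u\|^2\|v\|^2$. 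Observe that this step does not use the normalisation $\sum_jq_j=1$; the simplex constraint will intervene only when pinning down the minimiser in the equality case.

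For the equality statement, I would invoke the standard characterisation of the Cauchy--Schwarz equality: $\langle u,v\rangle^2=\|u\|^2\|v\|^2$ if and only if $u$ and $v$ are collinear. Since all entries of $u$ and $v$ are strictly positive, collinearity amounts to the existence of some $\lambda>0$ with $u_j=\lambda v_j$ for every $j$, equivalently $a_j/q_j=\lambda^2 b_jq_j$, which rearranges to $q_j=\lambda^{-1}\sqrt{a_jb_j^{-1}}$. Summing over $j$ and imposing $\sum_jq_j=1$ then forces $\lambda=\sum_k\sqrt{a_kb_k^{-1}}$, hence $\mu=\lambda^{-1}=\pa[2]{\sum_k\sqrt{a_kb_k^{-1}}}^{-1}$ as stated; a direct substitution confirms that this choice of $q_j$ indeed realises equality.

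The argument is entirely elementary, so there is no genuine obstacle. The only point of care is to keep the roles of the two hypotheses separate: positivity of $a_j,b_j,q_j$ is what makes the substitution $u_j=\sqrt{a_j/q_j}$, $v_j=\sqrt{b_jq_j}$ legitimate and what turns collinearity into a positive scalar relation, whereas the normalisation $\sum_jq_j=1$ is used only once, at the very end, to fix $\mu$ uniquely.
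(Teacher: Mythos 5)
Your proof is correct and is exactly the argument the paper has in mind: the lemma is introduced there as ``a straightforward consequence of Schwarz's Inequality'' with no further detail, and your substitution $u_j=\sqrt{a_j/q_j}$, $v_j=\sqrt{b_jq_j}$ together with the collinearity characterisation of equality is the standard way to make that precise. Nothing to add.
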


\begin{Theorem}\label{thm:phaseI} Assume~\eqref{strong_error} holds and let $\theta$ be defined by~\eqref{eq:theta}.
Then, the optimally allocated effort $\upphi^*$ defined by~\eqref{def:optPhi} satisfies 
\begin{equation*}
	\upphi^*(\uppi_0)
	\le \bar \upphi(\uppi_0, q^*) = \frac{\var(Y_0)}{h} \pa{1
	+ \theta h^{\frac{\beta}{2}} \sum_{j=1}^R 
	\pa[3]{\sum_{i=1}^R \abs{\mT_i^j} n_i^{-\frac{\beta}{2}}}
	\pa[3]{\sum_{i=1}^R n_i \ind{\mT_i^j \neq 0}}^{\frac 12}}^2
\end{equation*}
where $q^*=q^*(\uppi_0)$ is an optimal policy (with respect to the upper bound $\bar \upphi$) given by  
\begin{equation} \label{eq:q^*}
	\begin{cases}
		\ds q^*_1(\uppi_0) = \mu^*_{_R} (1 +\theta h^{\frac \beta 2}) \\
		\ds q^*_j(\uppi_0) = \mu^*_{_R} \theta h^{\frac \beta 2} \pa[3]{\sum_{i=1}^R  \abs{\mT_i^j} n_i^{-\frac{\beta}{2}}} \pa[3]{\sum_{i=1}^R n_i \ind{\mT_i^j \neq 0}}^{-\frac{1}{2}}, \; j=2,\ldots,R,
	\end{cases}
\end{equation} 
and $\mu^*_{_R}$ is the  normalizing constant such that $\sum_{j=1}^R q^*_j = 1$. 
\end{Theorem}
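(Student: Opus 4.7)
The plan is to build the upper bound $\bar\upphi$ by controlling each per-level variance $\var(\psca{\mT^j, Y^{(j)}_{h,\underline n}})$ via~\eqref{strong_error}, and then apply Lemma~\ref{lemme_CS} to minimize the resulting bound over admissible stratifications $q$. Plugging~\eqref{eq:unitcomplexity} into~\eqref{def:upphi}, the effort factorizes as
\[
\upphi(\uppi)=\frac{1}{h}\pa[4]{\sum_{j=1}^R\frac{\var(\psca{\mT^j, Y^{(j)}_{h,\underline n}})}{q_j}}\pa[4]{\sum_{j=1}^R q_j\, b_j},\qquad b_j:=\sum_{i=1}^R n_i\,\un_{\ac{\mT^j_i\neq 0}},
\]
a form readily amenable to Cauchy--Schwarz optimization in $q$.

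The first step is to bound the variances. For $j=1$, since $\mT^1=e_1$ gives $\psca{\mT^1, Y^{(1)}_{h,\underline n}}=Y_h$, Minkowski's inequality applied to standard deviations combined with~\eqref{strong_error} yields
\[
\sqrt{\var(Y_h)}\le\sqrt{\var(Y_0)}+\normLp{2}{Y_h-Y_0}\le\sqrt{\var(Y_0)}\,(1+\theta h^{\beta/2}).
\]
For $j\ge 2$, the zero-sum property~\eqref{Abeta} permits the recentering $\psca{\mT^j, Y^{(j)}_{h,\underline n}}=\sum_i\mT^j_i(Y^{(j)}_{h/n_i}-Y^{(j)}_0)$, so Minkowski again followed by~\eqref{strong_error} gives $\var(\psca{\mT^j, Y^{(j)}_{h,\underline n}})\le \var(Y_0)\,\theta^2 h^\beta\pa[1]{\sum_i|\mT^j_i|n_i^{-\beta/2}}^2$. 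Denote these bounds by $\bar a_j$ and define $\bar\upphi(\uppi_0,q):=\frac{1}{h}(\sum_j \bar a_j/q_j)(\sum_j q_j b_j)\ge\upphi(\uppi_0,q)$.

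The second step is the Cauchy--Schwarz optimization. Lemma~\ref{lemme_CS} applied to $(\bar a_j,b_j)_{1\le j\le R}$ under $\sum_j q_j=1$ yields $\bar\upphi(\uppi_0,q)\ge \tfrac{1}{h}(\sum_{j=1}^R\sqrt{\bar a_j b_j})^2$, with equality iff $q_j$ is proportional to $\sqrt{\bar a_j/b_j}$; absorbing the common $\sqrt{\var(Y_0)}$ factor into the normalization gives exactly~\eqref{eq:q^*}. The upper-bound formula then follows by computing $\sqrt{\bar a_j b_j}=\sqrt{\var(Y_0)}\,\theta h^{\beta/2}(\sum_i|\mT^j_i|n_i^{-\beta/2})(\sum_i n_i\un_{\ac{\mT^j_i\neq 0}})^{1/2}$ for $j\ge 2$ and observing that for $j=1$ both bracketed factors equal $1$ (since $\mT^1=e_1$ and $n_1=1$), so $\sqrt{\bar a_1 b_1}=\sqrt{\var(Y_0)}(1+\theta h^{\beta/2})$ decomposes cleanly into the constant $\sqrt{\var(Y_0)}$ sitting outside the square plus the $j=1$ term $\sqrt{\var(Y_0)}\,\theta h^{\beta/2}$ sitting inside the sum. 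The only genuinely delicate point is this final bookkeeping step, which packages $j=1$ and $j\ge 2$ into the single summation $\sum_{j=1}^R$ that appears in the theorem's statement.
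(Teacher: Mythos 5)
Your proof is correct and follows essentially the same route as the paper: recenter the levels $j\ge 2$ using the zero-sum property of $\mT^j$, bound each per-level variance via Minkowski and~\eqref{strong_error}, and optimize the resulting upper bound on the effort with Lemma~\ref{lemme_CS}. The only cosmetic difference is at level $j=1$, where you apply Minkowski to standard deviations while the paper expands $\esp{(Y_h-\esp{Y_0})^2}$; both yield the identical bound $\var(Y_h)\le\var(Y_0)(1+\theta h^{\beta/2})^2$.
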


\begin{proof}
    Under assumption~\eqref{Abeta}, we have $\psca{\mT^1, Y^{(1)}_{h, \underline n}}  = Y_h^{(1)}$ and, for every $j\!\in \{2,\ldots,R\}$,  $\psca{\mT^j, Y^{(j)}_{h, \underline n}} =  \psca{\mT^j, Y_{h,\underline n}^{(j)} - Y_0^{(j)}\mathbf{1}}$ since $\psca{\mT^j, \mathbf{1}}=0$. Hence, using the sub-additivity of standard deviation derived from (Minkowski's inequality) and the strong error assumption, we obtain   
    \begin{align} \label{eq:maj_var}
        \begin{split}
            \forall j \ge 2, \quad \var \pa{\psca{\mT^j, Y_{h,\underline n}^{(j)} }} 
    &=  \sigma\pa{\sum_{i=1}^R \mT_i^j \pa{Y_{\frac{h}{n_i}}^{(j)} - Y_0^{(j)} }}^2 
    \le \pa{\sum_{i=1}^R \abs{\mT_i^j} \sigma\pa{Y_{\frac{h}{n_i}}^{(j)} - Y_0^{(j)} }}^2, \\
	& \le V_1 h^{\beta} \pa{\sum_{i=1}^R \abs{\mT_i^j} n_i^{-\frac{\beta}{2}} }^2.
\end{split}
\end{align}
The variance of the Multilevel estimator is then given by
\begin{equation}\label{eq:varianceML}
	\var\pa{\bar Y_{h, \underline n}^{N,q}} \le \frac{1}{N} \pa{\frac{\var\pa{Y_h^{(1)}}}{q_1} + V_1 h^{\beta} \sum_{j=2}^R \frac{1}{q_j} \pa{\sum_{i=1}^R  \abs{\mT_i^j}n_i^{-\frac{\beta}{2}} }^2}.
\end{equation}

On the other hand, we have
\begin{align*}
	\var\pa{Y_h^{(1)}}=\var\pa{Y_h} &\le \esp{Y_h - \esp{Y_0} }^2\\
			     &\le   \normLp{2}{Y_h-Y_0}^2 +2 \esp{(Y_h-Y_0)(Y_0-\esp{Y_0})}+ \var\pa{Y_0}\\
			      &\le \var(Y_0) + V_1 h^{\beta} +2 \sqrt{V_1}h^{\beta/ 2}\sqrt{\var{Y_0}}
			      =   \var(Y_0)(1 +\theta h^{\frac \beta 2})^2.
\end{align*}
Combining~\eqref{eq:unitcomplexity}, the above inequality~\eqref{eq:varianceML} and the above upper-bound for $\var\pa{Y_h^{(1)}}$,  we derive the following upper bound   $\bar \upphi(\uppi)$ for the effort $\upphi(\uppi) $ defined by 
\begin{equation} \label{def:bar_upphi}
	\bar \upphi(\uppi) = \frac{\var(Y_0)}{h} \pa{
	\frac{(1 +\theta h^{\frac \beta 2})^2}{q_1} + \theta^2 h^{\beta} \sum_{j=2}^R  \frac{1}{q_j} 
\pa{ \sum_{i=1}^R \abs{\mT_i^j}n_i^{-\frac{\beta}{2}}}^2} 
\pa{ \sum_{i,j=1}^R q_j n_i \ind{\mT_i^j \neq 0}}.
\end{equation}
Applying Lemma \ref{lemme_CS} with $a_1 = (1 +\theta h^{\frac \beta 2})^2$, $b_1=1$ and $\ds a_j = \theta^2 h^{\beta} \pa{\sum_{i=1}^R   \abs{\mT_i^j}n_i^{-\frac{\beta}{2}}}^2 $, $\ds b_j = \sum_{i=1}^R n_i \ind{\mT_i^j \neq 0}$, $j\! \in \ac{2,\dots,R}$ completes the proof. 
\end{proof}
\begin{Remark}[Accuracy of the bound] \label{rq:strong_assum2} 
    As announced in Remark~\ref{rq:strong_assumption}, we can replace the strong error assumption~\eqref{strong_error} by a slight modified version $e.g.$ $\var\pa{Y_h-Y_{h'}} \le V_1 \abs{h-h'}^\beta$. Using this assumption, the upper bound of the previous theorem can be improved. For instance, if we make the natural choice $\mT^j = \W_j (e_j-e_{j-1})$ corresponding to the \MLRR estimator  (see Section~\ref{sec:templates}), we can replace~\eqref{eq:maj_var} by 
    \begin{equation} \label{eq:opt_majoration}
        \sigma \pa{\psca{\mT^j, Y_{h, \underline n}^{(j)}}} = |\W_j| \sigma \pa{Y_{\frac{h}{n_j}} - Y_{\frac{h}{n_{j-1}}}} \le |\W_j| \sqrt{V_1} \left|\frac{h}{n_j} - \frac{h}{n_{j-1}}\right|^{\beta/2}.
    \end{equation}
    Note that if the constant $V_1$ is sharp, the resulting upper bound derived in Theorem~\ref{thm:phaseI} is tight.
\end{Remark}

\begin{Remark}[About variance minimization] We established in the above proof that for every allocation policy $q= (q_1,\ldots,q_{_R})$, 
\begin{equation*}
	\var\pa{\bar Y_{h, \underline n}^{N,q}} \le \frac{\var(Y_0)}{N}\pa{
	\frac{(1 +\theta h^{\frac \beta 2})^2}{q_1} + \theta^2 h^{\beta} \sum_{j=2}^R  \frac{1}{q_j} 
\pa{ \sum_{i=1}^R  \abs{\mT_i^j} n_i^{-\frac{\beta}{2}}}^2}.
\end{equation*}
Then, applying Lemma \ref{lemme_CS} with $a_1 = (1 +\theta h^{\frac \beta 2})^2$, $\ds b_1 = 1$ and $\ds a_j = \theta^2 h^{\beta} \pa{\sum_{i=1}^R   \abs{\mT_i^j}n_i^{-\frac{\beta}{2}}}^2 $,  $\ds b_j = 1$, $j \!\in \ac{2,\dots,R}$, we obtain (since $\sum_{j=1}^R q_j b_j = 1$) 
\begin{equation*} 
    \inf_{q \in \calig{S}_+(R)}\var\pa{\bar Y_{h, \underline n}^{N,q}} \le \var(Y_0)\pa{1 + \theta h^{\frac{\beta}{2}} \sum_{j=1}^R \sum_{i=1}^R  \abs{\mT_i^j} n_i^{-\frac{\beta}{2}}}^2 
\end{equation*}
with an optimal choice (to minimize  the variance): $\ds q^{\dag}_1 = \mu^{\dag} (1 +\theta h^{\frac \beta 2})$, $\ds q^{\dag}_j = \mu^{\dag} \theta h^{\frac \beta 2} \pa[2]{\sum_{i=1}^R \abs{\mT_i^j}n_i^{-\frac{\beta}{2}} }$ ($\mu^{\dag}$ normalizing constant such that $\sum_{j=1}^n q^{\dag}_j\!=\! 1$). Note that this choice $q^\dag$ differs from the optimal one $q^*$ obtained in Theorem~\ref{thm:phaseI}.
\end{Remark}

\subsection{Resulting cost optimization (Step 2)}
 
\subsubsection{Bias parameter optimization ($R$ fixed)}
In this first sub-step, we fix the depth $R \ge 2$, the design matrix $\mT$ and the refiners $n_1, \dots, n_{_R}$ and we only optimize the bias parameter $h\in \Hr$ with respect to $\varepsilon > 0$, so that
\begin{equation*}
	\uppi_0(\varepsilon) = h(\varepsilon, n_1, \dots, n_{_R}, R, \mT).
\end{equation*}

We recall that $\upphi^*(h) \le \bar \upphi(h, q^*) =: \bar \upphi^*(h)$ where 
\begin{equation} \label{def:bar_upphi_star}
	\bar \upphi^*(h) = \frac{\var(Y_0)}{h} \pa{1
	+ \theta h^{\frac{\beta}{2}} \sum_{j=1}^R 
	\pa[3]{\sum_{i=1}^R \abs{\mT_i^j} n_i^{-\frac{\beta}{2}}}
	\pa[3]{\sum_{i=1}^R n_i \ind{\mT_i^j \neq 0}}^{\frac 12}}^2.
\end{equation}

\begin{Proposition}[Bias parameter optimization]\label{thm:phaseII} Assume~\eqref{weak_error} and~\eqref{strong_error}. Let $R \ge 2$ and let $n_i$, $i=1,\ldots,R$ ,be fixed refiners.
\begin{enumerate}
	\item[$(a)$]{\em \MLRR estimator:} Let $R\!\in \ac{2,\ldots, \bar R}$ be  such that $c_{_R}\neq 0$. A   \MLRR estimator of depth $R$ obtained with the allocation policy $ q^*$ defined by~\eqref{eq:q^*} and a bias parameter  	
	 \begin{equation}\label{eq:hMLRR}
			h^*(\varepsilon, R) = (1 + 2\alpha R)^{-\frac{1}{2\alpha R}} \pa{\frac{\varepsilon}{|c_{_R}|}}^{\frac{1}{\alpha R}} \underline n!^{\frac{1}{R}}
		\end{equation}
 achieves the asymptotic minimal cost, namely
		\begin{equation*}
			\inf_{\substack{h \in \Hr \\ \,|\!\bias(h, q^*)| < \varepsilon}}
			\Cost \pa{\bar Y^{N,q^*}_{h, \underline n}} 	
			\sim \pa[2]{\frac{(1+2\alpha R)^{1+\frac{1}{2\alpha R}}}{2 \alpha R}}
			\frac{|c_{_{R}}|^{\frac{1}{\alpha R}} \var(Y_0)}{\underline n !^{\frac{1}{R}}\varepsilon^{2+\frac{1}{\alpha R}}}\quad \mbox{ as } \quad \varepsilon \to 0.
		\end{equation*}
		\item[$(b)$]{\em \MLMC estimator:} Assume $c_1 \neq 0$. An \MLMC estimator of depth $R$ obtained with the allocation policy $q^*$ defined in~\eqref{eq:q^*} and a  bias parameter  		
		\begin{equation}\label{eq:hMLMC}
			h^*(\varepsilon, R) = (1 + 2\alpha)^{-\frac{1}{2\alpha}} \pa{\frac{\varepsilon}{|c_1|}}^{\frac{1}{\alpha}} n_{_R}
		\end{equation}
 achieves the asymptotic minimal cost, namely
		\begin{equation*}
			\inf_{\substack{h \in \Hr \\ \,|\!\bias(h, q^*)| < \varepsilon}}
			\Cost \pa{\bar Y^{N,q^*}_{h, \underline n}} 	
			\sim \pa[2]{\frac{(1+2\alpha)^{1+\frac{1}{2\alpha}}}{2 \alpha}}
			\frac{|c_1|^{\frac{1}{\alpha}} \var(Y_0)}{n_{_R} \varepsilon^{2 + \frac{1}{\alpha}}}\quad \mbox{ as } \quad \varepsilon \to 0.
		\end{equation*}
		\end{enumerate}
 \end{Proposition}

\begin{proof} $(a)$ By definition of the effort $\upphi$ and the bias $\bias$ of the estimator, we have (see Section~\eqref{sec:perf}) 
	\begin{equation*}
		\Cost \pa{\bar Y^{N,q^*}_{h, \underline n}} = \frac{\phi^*(h)}{\varepsilon^2 - \bias^2(h, q^*)}.
	\end{equation*}
	It follows from~\eqref{def:bar_upphi_star} that the cost minimization problem
	is upper-bounded by the more tractable pro\-blem  
\begin{equation*}
	\inf_{h \in \Hr,\,|\!\bias(h, q^*)| <\varepsilon} \frac{h \bar \upphi^*(h)}{h(\varepsilon^2-\bias^2(h, q^*))}
\end{equation*}
with a bias $\bias(h, q^*)$ satisfying~\eqref{eq:biasmultiRR}. First note that $\lim_{h \to 0} h \bar \upphi(h, q^*) = \var(Y_0)$. 
We will consider now the denominator $h( \varepsilon^2 - \bias^2(h, q^*))$. 
Elementary computations show that, for  fixed real numbers $a,\,R'>0$,  the function $ g_{a,R'}$  defined by $g_{a,R'}(\xi)= \xi(1-a^{2} \xi^{2 R'})$, $\xi>0$, satisfies 
\[
\xi(a, R'):=  {\rm argmax}_{\xi >0}g_{a,R'}(\xi)= \big( (2R'+1)^{\frac 12}a\big)^{-\frac{1}{R'}} \quad\text{ and }\quad \max_{(0,+\infty)}g_{a,R'}= \frac{2R'}{(2R'+1)^{1+\frac{1}{2R'}}}a^{-\frac{1}{R'}}.
\]
Then, set $R'=R\alpha$, $\tilde a= \frac{|\widetilde \w_{R+1}c_{_R}|}{\varepsilon}$. Inspired by what precedes, we make the sub-optimal choice 
$\displaystyle h(\varepsilon) =h(\varepsilon,R,\alpha) =  \xi\Big(\tilde a, \alpha R\Big)= \left(\frac{\varepsilon\, }{(2\alpha R+1)^{\frac 12} |c_{_R}|}\right)^{\frac{1}{\alpha R}}\underline n!^{\frac 1R}$ corresponding to the case $\eta_{R,\underline n} \equiv 0$. It is clear that, at least for small enough $\varepsilon$, $\bias^2(h, q^*)<\varepsilon^2$ which makes this choice admissible. Hence
\begin{equation}\label{eq:upperCost}
\inf_{h \in \Hr, \\ \,|\!\bias(h, q^*)| < \varepsilon} \frac{\upphi^*(h)}{\varepsilon^2 - \bias^2(h, q^*)}
\le \Big(1+\frac{1}{2\alpha R}\Big)(2\alpha R+1)^{\frac{1}{2\alpha R}}|c_{_{R}}|^{\frac{1}{\alpha R}}\frac{h(\varepsilon) \bar\upphi^*(h(\varepsilon))}{\underline n !^{\frac{1}{ R}}\varepsilon^{2+\frac{1}{\alpha R}}}\frac{1}{1-\frac{(\eta_{R,\underline n}(h(\varepsilon))+1)^2-1}{2\alpha R}}.
\end{equation}
The ``limsup" side of the result follows since $\lim_{h\to 0}\eta_{R,\underline n}(h)=0$. 

On the other hand, it follows from the definition~\eqref{def:upphi} of the effort $\upphi$ that 
\begin{equation*}
	\upphi^*(h) = \frac{1}{h} \pa{\sum_{j=1}^R \frac{1}{q^*_j} \var\pa{\psca{\mT^j, Y^{(j)}_{h, \underline n}}}} \pa{\sum_{i,j=1}^R q_j n_i \ind{\mT_{i}^j \neq 0}}.
\end{equation*}
Then Schwarz's Inequality implies
\begin{align*}
\upphi^*(h) 
&\ge \frac{1}{h} \pa{\sum_{j=1}^R \sqrt{\var\pa{\psca{\mT^j, Y^{(j)}_{h, \underline n}}}}\sqrt{\sum_{i=1}^R n_i\mbox{\bf 1}_{\{\mT^j_i\neq 0\}}}}^2\\
&\ge \frac{1}{h}\max_{1\le j\le R} \pa{\var\pa{\psca{\mT^j, Y^{(j)}_{h, \underline n}}}\sum_{i=1}^R n_i\mbox{\bf 1}_{\{\mT^j_i\neq 0\}}}\\
&\ge\frac{1}{h}\max_{1\le j\le R} \var\pa{\psca{\mT^j, Y^{(j)}_{h, \underline n}}}
\end{align*}
since $n_i\ge n_1=1$, $i=1,\ldots,R$.
Denoting $g(h) = \max_{1\le j\le R} \var\pa{\psca{\mT^j, Y^{(j)}_{h, \underline n}}}$ one clearly has $\lim_{h\to 0} g(h) = \var(Y_0)$ under the strong assumption~\eqref{strong_error} and, as a consequence,  $\lim_{h\to 0} h\, \upphi(h) = \var(Y_0)$. Hence, the cost minimization problem is lower bounded by the more explicit problem
\begin{equation*}
	\inf_{\substack{h \in \Hr \\ \,|\!\bias(h, q^*)| < \varepsilon}}
	\frac{g(h)}{h(\varepsilon^2-\bias^2(h, q^*))}.
\end{equation*}
Let $\eta\!\in (0,1)$. There exists $\varepsilon_{\eta}>0$ such that, for every $h\! \in (0,h(\varepsilon_{\eta}))$,  
\[
|g(h)-\var(Y_0)|\le \eta \var (Y_0) \quad \mbox{ and }\quad |\eta_{R,\underline n}(h)| \le \eta. 
\]
Let $\varepsilon\!\in (0, \varepsilon_{\eta})$. We derive from Equation~(\ref{eq:biasmultiRR}) that 
\[
	\bias(h(\varepsilon_{\eta}), q^*)^2 \ge \frac{\varepsilon_{\eta}^2(1-\eta)}{2\alpha R+1}.
\]
Consequently, if $\varepsilon < \frac{\varepsilon_{\eta}\sqrt{1-\eta}}{\sqrt{2\alpha R+1}}$, for every $h>0$ such that $\mu^2(h, q^*)< \varepsilon^2$, one has
\[
\frac{g(h)}{h(\varepsilon^2-\bias(h,q^*)^2)}\ge \frac{\var(Y_0)(1-\eta)}{h(\varepsilon^2- (1-\eta)(\widetilde \w_{_{R+1}} c_{_R} )^2 h^{2\alpha R} )}.
\]
Taking advantage of what was done in the ``$\limsup$" part, we get
\[
	\inf_{\substack{h \in \Hr \\ \bias(h, q^*)< \varepsilon}}
	\frac{g(h)}{h(\varepsilon^2-\bias(h, q^*)^2)} 
\ge \Big(1+\frac{1}{2\alpha R}\Big)(2\alpha R+1)^{\frac{1}{2\alpha R}}|c_{_{R}}|^{\frac{1}{\alpha R}} \,\frac{\var(Y_0)}{\underline n !^{\frac{1}{R}}\varepsilon^{2+\frac{1}{\alpha R}}}(1-\eta)^{1+\frac{1}{2\alpha R}}.
\]
Letting  $\varepsilon $ and $\eta$ successively  go to zero, yields the ``$\liminf$" side.

\medskip
\noindent $(ii)$ 
Owing to~\eqref{eq:biasmultiMC}, the bias $\bias(h, q)$ is now given by
\[
\bias(h, q) = \Big(\frac{h}{n_{_R}}\Big)^{\alpha} \left(c_1+\eta_{_1}\Big(\frac{h}{n_{_R}}\Big)\right)\quad \mbox{with }\quad \lim_{h\to 0} \eta_{_1}(h)=0.
\] 
Following the lines of the proof of $(i)$ with $R'=\alpha$ completes the proof.
\end{proof}

\begin{Remark} \label{rq:thm3.8}	\begin{itemize}
		\item[$\bullet$] The fact that the function $\lim_{h \to 0} h \upphi^*(h) = \var(Y_0)$ follows from the $L^2$-strong convergence of $Y_h$ toward $Y_0$. Its rate of  convergence plays no explicit role in this asymptotic rate of the cost as $\varepsilon\to 0$. However, this strong rate is important to design a practical allocation across the $R$ levels, which is the key to avoid an explosion of this term. 
	\item[$\bullet$] When $c_{_R}=0$, the same reasoning can be carried out by considering any small parameter $ \epsilon_0^R>0$. Anyway in practice $c_{_R}$ is usual not known and the impact of this situation   is briefly discussed further on in Section~\ref{sec:Rinfty}.
	\item[$\bullet$] When $c_1=0$, specific weights can be computed (see Practitioner's corner in Section~\ref{sec:practi}).
	\end{itemize}
\end{Remark}

\begin{Remark} The asymptotic number $N$ of simulations given by~\eqref{sol_avec_biais} satisfies 
	\begin{equation*}
		N(\varepsilon) \sim \pa{1 + \frac{1}{2 \alpha R}} \frac{\var(Y_0)}{\varepsilon^2}  \pa{\sum_{j=1}^R q^*_j \sum_{i=1}^R n_i \ind{\mT_i^j \neq 0}}^{-1} \quad \text{ as } \varepsilon \to 0
	\end{equation*}
	for an \MLRR estimator and 
	\begin{equation*}
		N(\varepsilon) \sim \pa{1 + \frac{1}{2 \alpha}} \frac{\var(Y_0)}{\varepsilon^2} \pa{\sum_{j=1}^R q^*_j \sum_{i=1}^R n_i \ind{\mT_i^j \neq 0}}^{-1} \quad \text{ as } \varepsilon \to 0
	\end{equation*}
	for an \MLMC estimator.
\end{Remark}

\subsubsection{Templates for  the design matrix $\mT$} \label{sec:templates}
We now specify the design matrices $\mT$ in both multilevel settings \MLMC defined in~\eqref{eq:desMLMC} and \MLRR defined in~\eqref{eq:desML2R}. 
\begin{description}
	\item[\MLMC estimator] The standard Multilevel Monte Carlo design matrix used by~\cite{Hein, GILES} is derived from the telescopic summation 
\begin{equation*}
	\esp[2]{Y_{\frac{h}{n_{_R}}}} = \esp[1]{Y_h} + \sum_{j = 2}^R \esp[2]{Y_{\frac{h}{n_j}} - Y_{\frac{h}{n_{j-1}}}}.
\end{equation*}
This telescopic sum corresponds  to the design matrix $\mT$ defined by $\mT^j = e_j - e_{j-1}$, $j=2,\dots,R$ \ie
\begin{equation} \label{tempMLMC} \tag{MLMC}
	\mT = \pa{\begin{array}{cccccc}
			1 & -1 & 0 & \cdots & \cdots & 0 \\
			0 & 1 & -1 & 0 & \cdots & 0 \\
			\vdots & \ddots & \ddots & \ddots &\ddots & \vdots\\
			\vdots & & \ddots & \ddots & \ddots & 0\\
			0 & \cdots & \cdots & 0 & 1 & -1 \\
			0 & \cdots & \cdots & \cdots & 0 & 1 \\
	\end{array}}.
\end{equation}
In that case, the resulting upper-bound $\bar \upphi^*$ of $\upphi^*$ writes, with the convention $n_0=( n_{0})^{-1}=0$, 
\begin{equation}\label{eq:MLMC}
	\bar \upphi^*(\uppi_0) = \frac{\var(Y_0)}{h} \pa{1 + \theta h^{\frac{\beta}{2}} \sum_{j=1}^R \pa{ n_{j-1}^{-\frac{\beta}{2}} +n_j^{-\frac{\beta}{2}}} \sqrt{n_{j-1} +n_j}}^2
\end{equation}
With this design matrix~\eqref{tempMLMC} the \MLMC estimator writes 
\begin{equation}\label{eq:EstimMLMC}
	\bar Y^{N,q}_{h, \underline n} = \frac{1}{N_1} \sum_{k=1}^{N_1} Y_{h}^{(1),k} + \sum_{j=2}^R \frac{1}{N_j} \sum_{k=1}^{N_j} \pa{Y_{\frac{h}{n_j}}^{(j),k} - Y_{\frac{h}{n_{j-1}}}^{(j),k}}
\end{equation}
with $N_j = \lceil q_j N \rceil$.
	\item[\MLRR estimator]
The natural counterpart for the design matrix $\mT$ in the \MLRR setting appears as  $\mT^j = -\W_j e_{j-1} + \W_j e_j$, $j=2,\dots,R$ with $\displaystyle \W_j = \sum_{k = j}^R \w_k$ and  $\w$  given by~\eqref{eq:alpha}~\ie
\begin{equation} \label{tempMLRR} \tag{ML2R}
	\mT = \pa{\begin{array}{cccccc }
			1 & -\W_2 & 0 & \cdots & \cdots & 0 \\
			0 & \W_2 & -\W_3 & 0 & \cdots & 0 \\
			\vdots & \ddots & \ddots & \ddots &\ddots & \vdots\\
			\vdots & & \ddots & \ddots & \ddots & 0\\
			0 & \cdots & \cdots & 0 & \W_{R-1} & -\W_R \\
			0 & \cdots & \cdots & \cdots & 0 & \W_R \\
	\end{array}}.
\end{equation}
The resulting upper-bound $\bar \upphi^*$   reads (still with the convention $n_0=(n_{0})^{-1}=0$),
\begin{equation}\label{eq:upphiMLRR}
	\bar \upphi^*(\uppi_0) = \frac{\var(Y_0)}{h}\pa{1 +\theta h^{\frac{\beta}{2}}\sum_{j=1}^R \abs{\W_j} \pa{ n_{j-1}^{-\frac{\beta}{2}} + n_j^{-\frac{\beta}{2}}} \sqrt{n_{j-1} +n_j}}^2.
\end{equation}
In the sequel, we will focus on the above choice~\eqref{tempMLRR} for the design matrix $\mT$ which leads to the \MLRR estimator~\eqref{intro:MLRR-estimator} proposed in the introduction. With this design matrix~\eqref{tempMLRR} the \MLRR estimator writes as a weighted version of \MLMC
\begin{equation}\label{eq:EstimML2R}
	\bar Y^{N,q}_{h, \underline n} = \frac{1}{N_1} \sum_{k=1}^{N_1} Y_{h}^{(1),k} + \sum_{j=2}^R \frac{\W_j}{N_j} \sum_{k=1}^{N_j} \pa{Y_{\frac{h}{n_j}}^{(j),k} - Y_{\frac{h}{n_{j-1}}}^{(j),k}}
\end{equation}
where $N_j = \lceil q_j N \rceil$.
Alternative choices for $\mT$ are proposed  in Section~\ref{sec:practi}.
\end{description}

\subsubsection{Global optimization with varying depth $R$ for geometric refiners}\label{sec:Rinfty}
In this final sub-step, we consider geometric refiners with \emph{root} $M \ge 2$ of the form 
$$
n_i = M^{i-1},\; i=1,\ldots,R.
$$ 
and we only analyze  the \MLRR  and \MLMC estimators  defined by~\eqref{eq:EstimML2R} and~\eqref{eq:EstimMLMC} respectively.  Note that geometric refiners have  already been considered in regular multilevel Monte Carlo framework in~\cite{GILES}.

\begin{Theorem}\label{prop:asympReps} Assume
~\eqref{strong_error} holds for $\beta>0$. 
\begin{enumerate}
	\item[$(a)$]{\em \MLRR estimator:} Assume~$(WE_{\alpha,\infty})$, $\ds \sup_{R\in \N} \sup_{h'\in(0,h)}|\eta_{_R}(h')|<+\infty$ for every $h \!\in \Hr$ and $\displaystyle \lim_{R\to +\infty} |c_{_R}|^{\frac 1R} = \widetilde c_{_\infty} \in(0,+\infty)$.
	The \MLRR estimator~\eqref{eq:EstimML2R} with design matrix $\mT$ in~\eqref{tempMLRR} satisfies
	\begin{equation} \label{res:MLRR}
			\limsup_{\varepsilon \to 0} v(\beta, \varepsilon)
			\times \inf_{\substack{h \in \Hr, R \ge 2 \\ \,|\!\bias(h, R, q^*)| < \varepsilon}} \Cost \pa{\bar Y_{h, \underline n}^{N,q}}
			\le K_{_{\rm ML2R}}(\alpha, \beta, M)
		\end{equation}
		with $\ds v(\beta, \varepsilon) = \begin{cases}
				\varepsilon^2 & \text{if $\beta > 1$,} \\
				\varepsilon^2 \pa{\log(1/\varepsilon)}^{-1} & \text{if $\beta = 1$,} \\
				\varepsilon^2 e^{-\frac{1-\beta}{\sqrt{\alpha}} \sqrt{2\log(1/\varepsilon)\log(M)}} & \text{if $\beta < 1$.} \\
			\end{cases}$
            
            When $\beta < 1$, the best rate achieved with $M = 2$. These rates are achieved with a depth 
		\begin{equation*}
			R^*(\varepsilon)= \left\lceil\frac 12 +\frac{\log\pa[1]{\tilde c^{\frac{1}{\alpha}} \mathbf{h}}}{\log(M)} + \sqrt{\pa[3]{\frac 12+\frac{\log\pa[1]{\tilde c^{\frac{1}{\alpha}} \mathbf{h}}}{\log(M)} }^2 + 2 \frac{\log(A/\varepsilon)}{\alpha \log(M)}} \ \right\rceil, \quad A = \sqrt{1+4 \alpha} ,
		\end{equation*}
		with $\widetilde c >\widetilde c_{_\infty}$    satisfying $\lim_{\varepsilon \to 0} R^*(\varepsilon) = +\infty$ and a bias parameter 
		$h^*={\mathbf h} /\lceil {\mathbf h}/h^*(\varepsilon, R(\varepsilon))\rceil $ where $h^*(\varepsilon, R(\varepsilon))$  
		  is given by~\eqref{eq:hMLRR}.
		The finite real constant $K_{_{\rm ML2R}}(\alpha, \beta, M)$ depends on $M$ and on the structural parameters $\alpha, \beta, V_1, \var(Y_0), {\mathbf h}$, namely 
		\begin{equation} \label{eq:K_MLRR}
			K_{_{\rm ML2R}}(\alpha, \beta, M) =
			\begin{cases}
                \frac{\var(Y_0) M}{{\mathbf h}} \pa{1 + \theta\, {\mathbf h}^{\frac{\beta}{2}} \frac{\W_\alpha(M) M^{\frac{\beta-1}{2}} \sqrt{1+M} (1+M^{-\frac \beta 2})}{1-M^{\frac{1-\beta}{2}}}}^2 & \text{if $\beta > 1$,} \\
                \frac{2 V_1}{\alpha} \pa{\frac{\W_\alpha(M) M (1+M) (1+M^{-\frac{1}{2}})^2}{\log(M)}} & \text{if $\beta = 1$,} \\
				V_1 {\mathbf h}^{1-\beta}\, \widetilde c^{\frac{(1-\beta)}{\alpha}}
                \pa{\frac{\W^2_\alpha(M) M (1+M)(1+M^{- \frac \beta 2})^2}{(M^{\frac{1-\beta}{2}}-1)^2}} & \text{if $\beta < 1$,}
			\end{cases}
		\end{equation}
        where $\W_\alpha(M) = \frac{M^{-\alpha}}{\pi^2_{\alpha, M}} \sum_{k \ge 0} M^{-\alpha \frac{k(k+3)}{2}} + \frac{1}{\pi_{\alpha, M}}$ with  $\pi_{\alpha, M} = \prod_{k \ge 1} (1-M^{-\alpha k})$.

	\item[$(b)$]{\em \MLMC estimator:}  Assume~$(WE_{\alpha,1})$ and $c_1 \neq 0$. The \MLMC estimator~\eqref{eq:EstimMLMC}  (with design matrix $\mT$ defined in~\eqref{tempMLMC}) satisfies
		\begin{equation} \label{res:MLMC}
			\limsup_{\varepsilon \to 0} v(\beta, \varepsilon)
			\times \inf_{\substack{h \in \Hr, R \ge 2 \\ \,|\!\bias(h, R, q^*)| < \varepsilon}} \Cost \pa{\bar Y_{h, \underline n}^{N,q}}
			\le K_{_{\rm MLMC}}(\alpha, \beta, M)
		\end{equation}
		with $\ds v(\beta, \varepsilon) = \begin{cases}
				\varepsilon^2 & \text{if $\beta > 1$,} \\
				\varepsilon^2 \pa{\log(1/\varepsilon)}^{-2} & \text{if $\beta = 1$,} \\
                \varepsilon^{2 + \frac{1-\beta}{\alpha}} & \text{if $\beta < 1$.} \\
			\end{cases}$
		
		These rates are achieved with a depth
		\begin{equation*}
			R^*(\varepsilon)= \left\lceil 1 + \frac{\log\pa[1]{\abs{c_1}^{\frac{1}{\alpha}} \mathbf{h}}}{\log(M)} + \frac{\log(A / \varepsilon)}{\alpha \log(M)} \right\rceil, \quad A = \sqrt{1+2 \alpha}  
		\end{equation*}
		satisfying $\lim_{\varepsilon \to 0} R^*(\varepsilon) = +\infty$ and a bias parameter $h^*={\mathbf h} /\lceil {\mathbf h}/h^*(\varepsilon, R(\varepsilon))\rceil $ where $h^*(\varepsilon, R(\varepsilon))$ is  given by~\eqref{eq:hMLMC}.
		The finite real constant $K_{_{\rm MLMC}}(\alpha, \beta, M)$ depends on $M$ and the structural parameters $\alpha, \beta, V_1, \var(Y_0), {\mathbf h}$, namely 
		\begin{equation*}
			K_{_{\rm MLMC}}(\alpha, \beta, M) =
			\begin{cases}
				\pa{1+\frac{1}{2 \alpha}} \frac{\var(Y_0) M}{{\mathbf h}} \pa{1 + \theta\, {\mathbf h}^{\frac{\beta}{2}} \frac{M^{\frac{\beta-1}{2}} \sqrt{1+M} (1+M^{-\frac \beta 2})}{1-M^{\frac{1-\beta}{2}}}}^2
                & \text{if $\beta > 1$,} \\
				\pa{1+\frac{1}{2 \alpha}} \frac{V_1}{\alpha^2} \pa{\frac{M (1+M) (1+M^{-\frac{1}{2}})^2}{\log(M)^2}}
 & \text{if $\beta = 1$,} \\
				\frac{(1+2 \alpha)^{1 + \frac{1-\beta}{2 \alpha}}}{2 \alpha} V_1 {\mathbf h}^{1-\beta} |c_1|^{\frac{(1-\beta)}{\alpha}}
				\pa{\frac{M (1+M)(1+M^{- \frac \beta 2})^2}{(M^{\frac{1-\beta}{2}}-1)^2}} & \text{if $\beta < 1$.}
			\end{cases}
		\end{equation*}
\end{enumerate}
\end{Theorem}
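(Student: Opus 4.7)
My plan is to build on Theorem~\ref{thm:phaseII}, which already dispatches, for each fixed depth $R$, the cost-optimal bias parameter $h^*(\varepsilon, R)$ given in~\eqref{eq:hMLRR} (resp.~\eqref{eq:hMLMC} for MLMC). The novelty here is that $R=R^*(\varepsilon)$ must itself grow to infinity as $\varepsilon\to 0$, so I cannot just plug $\varepsilon\to 0$ into the constants of Theorem~\ref{thm:phaseII}: those constants depend on $R$ through the sum appearing in~\eqref{eq:upphiMLRR} and I must track that dependence uniformly in $R$.

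First, specialize the effort upper bound $\bar\upphi^*$ from~\eqref{eq:upphiMLRR} to geometric refiners $n_j=M^{j-1}$: using $\sqrt{n_{j-1}+n_j}=M^{(j-2)/2}\sqrt{1+M}$ and $n_{j-1}^{-\beta/2}+n_j^{-\beta/2}=M^{-\beta(j-2)/2}(1+M^{-\beta/2})$, the relevant $R$-dependent quantity becomes
$$
S_R^{(\beta)}:=\sqrt{1+M}\,(1+M^{-\beta/2})\sum_{j=1}^R |\W_j|\,M^{(j-2)(1-\beta)/2}
$$
(with a cosmetic boundary term at $j=1$ absorbed into a constant). The closed form~\eqref{eq:weights} for the $\w_i$ yields a uniform bound $|\w_i|\le \W_\alpha(M)\,M^{-\alpha(R-i)(R-i+1)/2}$, so by summation $|\W_j|\le \W_\alpha(M)$ uniformly in $R$ and $j$. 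This gives three regimes for $S_R^{(\beta)}$: bounded when $\beta>1$, of order $R$ when $\beta=1$, and of order $M^{R(1-\beta)/2}$ when $\beta<1$. The assumption $\sup_R\sup_{h'}|\eta_R(h')|<\infty$ ensures that the $1+\eta_{R,\underline n}(h)$ factor in the bias~\eqref{eq:biasmultiRR} is uniformly controlled when pushing $R\to\infty$.

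Next, inject $h=h^*(\varepsilon, R)$ into $\Cost\le \bar\upphi^*(h^*)/(\varepsilon^2-\bias^2(h^*,q^*))$, exactly as in~\eqref{eq:upperCost}: the denominator becomes $\sim 2\alpha R\,\varepsilon^2/(1+2\alpha R)$ while $h^*(\varepsilon, R)$, using $|c_R|^{1/R}\to\tilde c$ and $\underline n!^{1/R}=M^{(R-1)/2}$, has the explicit asymptotics $h^*\sim \tilde c^{-1/\alpha}\,M^{(R-1)/2}\,A^{-1/(\alpha R)}\,\varepsilon^{1/(\alpha R)}$ with $A=\sqrt{1+4\alpha}$ (I use here that $(1+2\alpha R)^{1/(2\alpha R)}\to 1$ and absorb lower order factors). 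Plugging in, the cost splits into a base piece $\propto \var(Y_0)/h^*/\varepsilon^2$ and a corrector $\propto V_1\,h^{*(\beta-1)}(S_R^{(\beta)})^2/\varepsilon^2$. One then performs the $R$-minimization in each regime: for $\beta>1$ the corrector is bounded, the base is minimized by choosing $R$ so that $h^*$ is pushed to $\mathbf h$, yielding $\Cost=O(\varepsilon^{-2})$; for $\beta=1$ the corrector contributes a factor $R^2/\log M$ and a balance calculation (a quadratic in $R$) gives $R^*(\varepsilon)\sim\sqrt{2\log(1/\varepsilon)/(\alpha\log M)}$, whence the $\log(1/\varepsilon)^{-1}$ appears after the $R^{-1}$ factor in front; for $\beta<1$ both terms scale exponentially in $R$ and the Lambert-type balance $M^{R(1-\beta)}=M^R\cdot\varepsilon^{-1/(\alpha R)}$-type yields exactly $R^*(\varepsilon)\sim \sqrt{2\log(1/\varepsilon)\log M/\alpha}$, producing the stretched-exponential factor $\exp\!\big(-\tfrac{1-\beta}{\sqrt\alpha}\sqrt{2\log(1/\varepsilon)\log M}\big)$ announced in the statement. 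The resulting explicit prefactors in~\eqref{eq:K_MLRR} come from tracking the $\W_\alpha(M)$ and $\sqrt{1+M}(1+M^{-\beta/2})$ constants through the minimization.

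The main obstacle is the $\beta<1$ regime: both the base $\var(Y_0)/h^*$ and the strong-error corrector grow exponentially with $R$, and obtaining the sharp exponent $\tfrac{1-\beta}{\sqrt\alpha}$ requires a careful non-integer optimization of a function of the form $R\mapsto M^{R(1-\beta)}\varepsilon^{2/(\alpha R)}$, whose minimum involves $\sqrt{\log(1/\varepsilon)\log M}$ and must then be rounded via $\lfloor\cdot\rfloor$ without losing the asymptotic constant; this is where the particular form of $R^*(\varepsilon)$ stated in the theorem is used. The MLMC case~(b) follows the exact same script but is much simpler: the weights $|\W_j|$ are replaced by $1$, so $S_R^{(\beta)}$ is genuinely geometric and the bias is only first-order ($(h/n_R)^\alpha$ by~\eqref{eq:biasmultiMC}), so the balance between bias and strong error is linear rather than quadratic in $R$, giving $R^*(\varepsilon)\sim\log(1/\varepsilon)/(\alpha\log M)$ and the stated powers of $\log(1/\varepsilon)$ for $\beta=1$ and of $\varepsilon$ for $\beta<1$.
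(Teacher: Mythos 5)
Your proposal follows essentially the same route as the paper's proof: you start from the fixed-$R$ cost bound of Theorem~\ref{thm:phaseII} with $h^*(\varepsilon,R)$ from~\eqref{eq:hMLRR}, control the $R$-dependence of the effort through $\max_j|\W_j|\le\W_\alpha(M)$ and the geometric identity for $(n_{j-1}^{-\beta/2}+n_j^{-\beta/2})\sqrt{n_{j-1}+n_j}$, determine $R^*(\varepsilon)$ by saturating $h^*\le\mathbf h$ (the same quadratic $P(R)=0$), use the uniform bound on $\eta_{_R}$ to keep the bias admissible as $R\to\infty$, and then split into the three $\beta$-regimes exactly as in the text. The only blemish is a notational slip in the $\beta<1$ case: the quantity $\sqrt{2\log(1/\varepsilon)\log M/\alpha}$ is $R^*(\varepsilon)\log M$ rather than $R^*(\varepsilon)$ itself (which is $\sim\sqrt{2\log(1/\varepsilon)/(\alpha\log M)}$ in all three regimes), though the stretched-exponential exponent you ultimately report is correct.
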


\paragraph{Comments.} 
Claim $(b)$ is essentially established in Giles' complexity Theorem from~\cite{GILES}.
\begin{itemize}
    \item When $\beta < 1$, \MLRR (with $M=2$) is asymptotically more efficient than \MLMC by a factor $\varepsilon^{-\frac{1-\beta}{\sqrt{\alpha}}} e^{-\frac{1-\beta}{\alpha} \sqrt{2\log(M) \log(1/\varepsilon)}}$ which goes to $+\infty $  as $\varepsilon \to 0$ in a very steep way. To be precise the ratio is greater than $1$ as soon as 
\[
\varepsilon \le 2^{-\frac{2}{\alpha}}.
\] 
It is clear that it is for this setting that \MLRR is the most powerful compared to regular \MLMC.

    \item When $\beta = 1$, \MLRR is asymptotically more efficient than \MLMC by a factor $\log \big(1/\varepsilon)\to +\infty $  as $\varepsilon \to 0$.

    \item When $\beta > 1$, both estimators achieve the same rate $\varepsilon^{-2}$ as a virtual unbiased Monte Carlo method based on the direct simulation of $Y_0$. Some numerical experiments carried out with the call in Black-Scholes model discretized by a Milstein scheme strongly suggest that the constant of the \MLRR estimator is significant lower than the \MLMC one.  
\end{itemize}

\begin{Remark} 
	\begin{itemize}
		\item[$\bullet$] It is proved in Appendix~B that $\displaystyle \lim_{M\to+\infty}  \W_{\alpha}(M)=1$ and, to be more precise, that $\W_{\alpha}(M)-1 \sim M^{-\alpha}$ as $M \to +\infty$.

		\item[$\bullet$] The assumption on the functions $\eta_{_R}$ and  the sequence ${(c_{_R})}_{R\ge 2}$ in~$(a)$  of the above proposition are reasonable, though almost  impossible to check in practice. In particular, note that as soon as the sequence ${(c_{_R})}_{R\ge 2}$ has at most a polynomial growth as a function of  $R$, it satisfies the assumption since $\tilde c =1$.
		
		\item[$\bullet$]  When $\tilde c_{_\infty}=0$, the constant $K(\alpha,\beta, M)$ is equal to $0$   which emphasizes that we are not in the right asymptotic.  In practice $\tilde c_{_\infty}$ is   replaced $c_{_R}$ in this constant by the parameter $\tilde c>0$ used to define the depth.
    \end{itemize}
\end{Remark}

\noindent {\bf Proof of Theorem~\ref{prop:asympReps}}. We provide a detailed poof of claim $(a)$, that of $(b)$ following the same lines. 

\noindent {\sc Step~1}: We start from Equation~\eqref{eq:upperCost} in the proof of  Proposition~\ref{thm:phaseII} which reads 
\begin{equation*}
	\inf_{\substack{h \in \Hr \\ \,|\!\bias(h, q^*)| < \varepsilon}} \Cost \pa{\bar Y^{N, q^*}_{h, \underline n}}  \le \pa{1 + \frac{1}{2 \alpha R}} \frac{\bar \upphi^*(h^*(\varepsilon))}{\varepsilon^2} \frac{1}{1-\frac{(\eta_{R,\underline n}(h^*(\varepsilon))+1)^2-1}{2\alpha R}}
\end{equation*}
with 
\begin{equation*}
	\bar \upphi^*(h^*(\varepsilon)) = \frac{ \var(Y_0) }{h^*(\varepsilon)}\pa{1 + \theta h^*(\varepsilon)^{\frac \beta 2} \sum_{j=1}^R \abs{\W_j} \pa{n_{j-1}^{-\frac \beta 2} + n_j^{- \frac \beta 2}} \sqrt{n_{j-1} + n_j}}^2
\end{equation*}
(convention $n_0 = (n_0)^{-1} = 0$).
The idea  is to choose $R = R^*(\varepsilon)$ as large as possible provided the optimal bias parameter $h^*$ lies in $\Hr$. The form of the refiners $n_i = M^{i-1}$ implies that $\underline n != M^{\frac{R(R-1)}{2}}$ so that 
\begin{equation*}
	h^*(\varepsilon, R) = (1 + 2\alpha R)^{-\frac{1}{2 \alpha R}} |c_{_R}|^{-\frac{1}{\alpha R}} \varepsilon^{\frac{1}{\alpha R}} M^{\frac{R-1}{2}}.
\end{equation*} 
To determine the dependence of $R$ with respect to $\varepsilon$, we consider the auxiliary function 
\begin{equation*}
\tilde	h(\varepsilon, R) = (1+4 \alpha)^{-\frac{1}{2 \alpha R}} \widetilde c^{\,-\frac{1}{\alpha}} \varepsilon^{\frac{1}{\alpha R}}M^{\frac{R-1}{2}},
\end{equation*}
and let $P$ be the polynomial function
\begin{equation*}
	P(R) = \frac {R(R-1)}{2}\log(M) - R \log(K) - \frac{1}{\alpha} \log\pa{\sqrt{1+4 \alpha} / \varepsilon},
\end{equation*}
such that  $\tilde h(\varepsilon, R) = {\mathbf h}\, e^{\frac{P(R)}{R}}$, where $K = \widetilde c^{\frac{1}{\alpha}} \mathbf{h}$. Note that the polynomial function $P$ has a unique positive root $R_+(\varepsilon)$ given by 
\[
R_+(\varepsilon) =  \frac 12 +\frac{\log(K)}{\log(M)} + \sqrt{\Big(\frac 12+\frac{\log(K)}{\log(M)}\Big)^2+2\,\frac{\log\pa{\sqrt{1+4\alpha}/\varepsilon}}{\alpha \log M}}
\]
so that $\tilde	h(\varepsilon, R) = {\mathbf h}$. We then consider $R^*(\varepsilon)= \lceil R_+(\varepsilon)\rceil$ and define $h^*(\varepsilon)$ as the projection of $  h^*(\varepsilon, R^*(\varepsilon))$ on $\Hr$ so that $h^*(\varepsilon)\le {\mathbf h}$ and $h^*(\varepsilon)$ si equal to ${\mathbf h}$ for small enough $\varepsilon$. 

Let us show that our choice $h^*(\varepsilon)$ is admissible, i.e. $\bias(\varepsilon) = \bias(h^*(\varepsilon), R^*(\varepsilon), q^*)$ satisfies $\bias(\varepsilon)^2<\varepsilon^2$ at least for small enough $\varepsilon$.  Elementary computations show that 
\begin{align*}
\bias(\varepsilon)^2&= \Big(c_{_{R^*(\varepsilon)}}M^{\frac{R^*(\varepsilon)(R^*(\varepsilon)-1)}{2} \alpha} h^*(\varepsilon )^{\alpha R^*(\varepsilon)} \Big)^2\Big(1+\eta_{R^*(\varepsilon),n}\big(h^*(\varepsilon)\big)\Big)^2\\
&=  (1+4\alpha)^{-1}\varepsilon^2 e^{-2\alpha P(R^*(\varepsilon))}\Big(\frac{c_{R^*(\varepsilon)}}{\tilde c_{R^*(\varepsilon)}}\Big)^2\Big(1+\eta_{R^*(\varepsilon),n}\big(h^*(\varepsilon)\big)\Big)^2.
\end{align*}
First note that we have $\lim_{R\to +\infty} |c_R|^{\frac 1R} = \tilde c_{_\infty}$ and $\tilde c > \tilde c_{_\infty}$. Moreover,  Claim~6 of  Proposition~\ref{asympalpha:2}  in  Appendix~A  and the assumption on $\eta_{R}$ imply  that 
$$
\sup_{0<h'<{\mathbf h}}|\eta_{R^*(\varepsilon),\underline n}(h')|\le B_{\alpha}(M) \sup_{h'\in (0,{\mathbf h})}|\eta_{R^*(\varepsilon)}(h')|\le  B_{\alpha}(M) \sup_{R\ge 1}\sup_{h'\in (0,{\mathbf h})}|\eta_{R}(h')|<+\infty.
$$
As a consequence of the assumption made on the functions $\eta_{_R}$, it is clear that $\bias(\varepsilon)^2= o(\varepsilon^2)$ since $R^*(\varepsilon) \to +\infty$ as $\varepsilon \to 0$. Hence, our choice for the bias parameter is admissible, at least for small enough $\varepsilon$. 

Likewise, the assumption on the  functions $\eta_R$  implies $\lim_{\varepsilon \to 0} \frac{\big(\eta_{R^*(\varepsilon),\underline n}(h^*(\varepsilon)+1\big)^2-1}{2\alpha R^*(\varepsilon)}=0$. 

We have  then  proved that
\begin{equation*}
	\limsup_{\varepsilon \to 0} \pa{l(\varepsilon, R^*(\varepsilon)) \inf_{\substack{h \in \Hr \\ \,|\!\bias(h, R, q^*)| < \varepsilon}} \times \Cost \pa{\bar Y^{N, q^*}_{h, \underline n}}} \le \frac{M \var(Y_0)}{{\mathbf h}}
\end{equation*}
with 
\begin{equation*}
	l(\varepsilon, R) = \varepsilon^2 \pa{1 + \theta \,h^*(\varepsilon, R)^{\frac{\beta}{2}} \sum_{j=1}^R \abs{\W_j} \pa{n_{j-1}^{-\frac{\beta}{2}} + n_j^{-\frac{\beta}{2}}} \sqrt{n_{j-1} +n_j}}^{-2}.
\end{equation*}
It follows from Claim~5 of  Proposition~\ref{asympalpha:2} in  Appendix~A  that $\max_{j=1,\dots,R} \abs{\W_i} \le \W_{\alpha}(M)$. On the other hand, standard computations show that, for every $j=2,\ldots,R$, 
\begin{equation}\label{eq:ni}
  \pa{n_{j-1}^{-\frac{\beta}{2}} + n_j^{-\frac{\beta}{2}}} \sqrt{n_{j-1} +n_j}= M^{\beta-1}M^{j\frac{1-\beta}{2}}\big(1+M^{-\frac{\beta}{2}}\big)\big(1+M\big)^{\frac 12}.
\end{equation} 
Moreover, with our convention on $n_0$, it still  holds true as an inequality ($\le$) for $j=1$. So 
\begin{equation*}
	l(\varepsilon, R) \ge \varepsilon^2 \pa{1+\theta h^*(\varepsilon, R)^{\frac \beta 2} \W_\alpha(M) M^{\beta-1} \sqrt{1+M} (1+M^{-\frac\beta 2}) \sum_{j=1}^R M^{j \frac{1-\beta}{2}}}^{-2}.
\end{equation*}

\noindent {\sc Step~2}: Now we will inspect successively the three cases depending on the strong rate convergence parameter $\beta > 0$.
\begin{description}
	\item[Case $\beta = 1$.] In that case,
		\begin{align*}
			l(\varepsilon, R^*(\varepsilon)) &\ge \varepsilon^2 \pa{1+\theta h^*(\varepsilon)^{\frac \beta 2} \W_\alpha(M) \sqrt{1+M} (1+M^{-\frac 1 2}) R^*(\varepsilon)}^{-2}, \\ 
			& \ge \varepsilon^2 \pa{1+\theta\, {\mathbf h}^{\frac \beta 2} \W_\alpha(M) \sqrt{1+M} (1+M^{-\frac 1 2}) R_+(\varepsilon)}^{-2}, 
		\end{align*}
		and, as $R^*(\varepsilon)^2\sim R_+^2(\varepsilon) \sim \frac{2}{\alpha \log(M)} \log(1/\varepsilon)$ as $\varepsilon \to 0$, we get~\eqref{res:MLRR} with $K_{_{\rm ML2R}}(\alpha, 1, M)$ given by~\eqref{eq:K_MLRR} keeping in mind  that $V_1 = \var(Y_0) \theta^2$.

	\item[Case $\beta > 1$.] Noting that $\sum_{j=1}^R M^{j \frac{1-\beta}{2}} \le \frac{M^{\frac{1-\beta}{2}}}{1-M^{\frac{1-\beta}{2}}}$, we get 
		\begin{equation*}
			l(\varepsilon, R^*(\varepsilon)) \ge \varepsilon^2 \pa{1+\theta\, {\mathbf h}^{\frac \beta 2} \frac{\W_\alpha(M) M^{\frac{\beta-1}{2}} \sqrt{1+M} (1+M^{-\frac \beta 2})}{1-M^{\frac{1-\beta}{2}}}}^{-2},
		\end{equation*}
		which yields~\eqref{res:MLRR} with $K_{_{\rm ML2R}}(\alpha, \beta, M)$ given by~\eqref{eq:K_MLRR}.  

	\item[Case $\beta < 1$.] In that setting, we note this time that $\sum_{j= 1}^R M^{j\frac{1-\beta}{2}}\le \frac{M^{(R+1)\frac{1-\beta}{2}}}{M^{\frac{1-\beta}{2}}-1}$ so that 
		\begin{equation*}
			l(\varepsilon, R^*(\varepsilon)) \ge \varepsilon^2 \pa{1+\theta {\mathbf h}^{\frac \beta 2} \frac{\W_\alpha(M) \sqrt{1+M} (1+M^{-\frac \beta 2})}{M^{\frac{1-\beta}{2}}-1} M^{(R_+(\varepsilon)-1) \frac{1-\beta}{2}}}^{-2}.
		\end{equation*}
	As $R_+(\varepsilon)$ satisfies $\tilde h(\varepsilon, R_+(\varepsilon)) = {\mathbf h}$, we obtain $M^{\frac{R_+(\varepsilon)-1}{2}} = (1+4\alpha)^{\frac{1}{2\alpha R_+(\varepsilon)}}{\mathbf h} \,\widetilde c^{\frac{1}{\alpha}} \varepsilon^{-\frac{1}{\alpha R_+(\varepsilon)}}$. We have $\varepsilon^{-\frac{1}{\alpha R_+(\varepsilon)}}\sim e^{\sqrt{\frac{\log(M)}{2\alpha} \log(1/\varepsilon)}}$ as $\varepsilon\to 0$.
	Elementary, though tedious, computations yield~\eqref{res:MLRR} with $K_{_{\rm ML2R}}(\alpha, \beta, M)$ given by~\eqref{eq:K_MLRR}.  
\end{description}

\noindent $(b)$ The choice for $R^*(\varepsilon)$ follows by considering the auxiliary function 
$$
\tilde h(\varepsilon, R) = (1 + 2\alpha)^{-\frac{1}{2\alpha}} |c_1|^{-\frac{1}{\alpha}} \varepsilon^{\frac 1 \alpha} M^{R-1}.
$$
Then, the proof follows the same lines as that of $(a)$.

\begin{Remark}[On the constraint $\mathbf h$] In the proof we chose to saturate the constraint $h^* \le \mathbf{h}$. If we consider $h^* = \chi$ where $\chi$ is a free parameter in $(0, \mathbf{h}]$, then the asymptotic constants $K(\alpha,\beta,M)$ for the renormalized optimized cost in Theorem~\ref{prop:asympReps} depends on $\chi$ and one verifies the following facts:

\begin{itemize}

\item When $\beta <1$, one can write $K_{_{\rm ML2R}}(\alpha,\beta,M, \chi)=  {\chi}^{1-\beta}   K_{_{\rm ML2R}}(\alpha,\beta,M,1)$ which this time suggests to start the simulation with a small upper bias parameter $\chi < {\mathbf h}$.

\item When $\beta=1$, the asymptotic constant $K_{_{\rm ML2R}}(\alpha,1,M, \chi)$ {\em does not depend} on $\chi$. This suggests that the choice of the upper bias parameter is not decisive, at least for high accuracy computations ($\varepsilon$ close to $0$). The choice $\chi = \mathbf{h}$ remains  the most natural.

\item When $\beta >1$, the asymptotic cost of the simulation increases in $\varepsilon^2$ like  a (virtual)  unbiased one. In that very case, it appears that the asymptotic constant  $K_{_{\rm ML2R}}(\alpha,\beta,M, \chi)$ can itself be optimized as a function of  $\chi$. Namely, if we set 
\[
\kappa_1= \frac{\var(Y_0) M}{\chi} \quad \mbox{ and }\quad \kappa_2=\theta^2   \frac{\W_\alpha(M)^2 M^{\beta-1} (1+M)(1+M^{-\beta })}{(1-M^{\frac{1-\beta}{2}})^2},
\]
then
\[
{\chi}_{opt}=  \beta^{-\frac{2}{\beta+1}} \kappa_2^{-\frac{1}{\beta+1}} \quad \mbox{ and }\quad K_{_{\rm ML2R}}(\alpha,\beta,M, {\chi}_{opt})=(\beta+1)^2\beta^{-\frac{2}{\beta+1}} \,\kappa_1\, \kappa_2^{\frac{1}{\beta+1}}.
\]
\end{itemize}
\end{Remark}

\section{Examples of applications}\label{sec:appli}
\subsection{Brownian diffusion approximation}\label{sec:brown}

\paragraph{Euler scheme} In fact, the (one-step) Richardson-Romberg extrapolation is well-known as an efficient mean to reduce the time discretization error induced by the use of an Euler scheme to simulate a Brownian diffusion. In this field of Numerical Probability, its introduction goes back to Talay and Tubaro in their seminal paper~\cite{TATU} on weak error expansion, followed by the case of non-smooth functions in~\cite{BATA}, under an H\"ormander hypo-ellipticity assumption. 

It relies on the following theorem.
\begin{Theorem}\label{theoromberg1}
	Let $b: \R^d \to \R^d$, $\sigma: \R^d \to \calig{M}(d,q)$ and let $(W_t)_{t \ge 0}$ be a $q$-dimensional standard Brownian motion defined on a probability space $(\Omega, \calig{A}, \P)$. Let $X=(X_t)_{t\in [0,T]}$ be a diffusion process, strong solution to the Stochastic Differential Equation ($SDE$)
\begin{equation} \label{SDE}
	\d X_t = b(X_t) \d t + \sigma(X_t) \d W_t, \; t\in\left[0,T\right],\; X_0 = x_0 \in \R^d,
\end{equation}
and its continuous Euler scheme $\bar X^h= (\bar X^h_t)_{t\in [0,T]}$ with bias (step) parameter $h = T/n$ defined by
\begin{equation*}
	\bar{X}^h_t=X_0+\int_0^tb\big(\bar{X}^h_{\underline{s}}\big)\d s+\int_0^t\sigma\big(\bar{X}^h_{\underline{s}}\big)\d W_s,
	\quad \text{ where } \quad \underline{s}= kh \text{ on } \left[k h, (k+1)h \right),\; k=0,\ldots,n.
\end{equation*}
where 
\[
\underline s = kh \mbox{ on } [kh, (k+1)h),\quad k=0,\ldots,n.
\]
\noindent  $(a)$ Smooth setting (Talay-Tubaro~\cite{TATU}): If  $b$ and $\sigma$ are  infinitely differentiable with bounded  partial derivatives and if $f:\R^d\rightarrow \R$ is an infinitely differentiable function, whose all partial derivatives have polynomial growth, then, for a fixed $T>0$ and every integer $R \in \N^*$,
\begin{equation}\label{RExpansion}
	\esp{f(\bar{X}^h_T)} - \esp{f(X_T)} = \sum_{k=1}^R c_k h^k + O \pa{h^{R+1}}
\end{equation}
where the coefficients $c_k$ depend on $b$, $\sigma$, $f$, $T$ (but {\em not on $h$}).

\smallskip
\noindent $(b)$ (Hypo-)Elliptic setting (Bally-Talay~\cite{BATA}): If  $b$ and $\sigma$ are infinitely differentiable with bounded  partial derivatives and if $\sigma $ is uniformly elliptic in the sense that 
\[
\forall\, x\!\in \R^d, \quad \sigma\sigma^*(x) \ge \varepsilon_0I_q, \; \varepsilon_0>0
\]
or, more generally, if $(b,\sigma)$ satisfies the strong  H\"ormander hypo-ellipticity assumption, then~\eqref{RExpansion} holds true  for every bounded Borel function $f:\R^d\rightarrow \R$.
\end{Theorem}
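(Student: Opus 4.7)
The plan is to follow the standard Kolmogorov/PDE approach used in~\cite{TATU,BATA}. Define $u(t,x) = \esp{f(X^{t,x}_T)}$ where $(X^{t,x}_s)_{s \in [t,T]}$ is the diffusion starting from $x$ at time $t$. In the smooth setting of~$(a)$, the Feynman--Kac representation combined with standard $\L^p$--estimates on the tangent flow of SDEs with smooth coefficients shows that $u \in C^\infty([0,T] \times \R^d)$, all its partial derivatives have polynomial growth in $x$ uniformly in $t$, and $u$ solves the backward Kolmogorov equation $\partial_t u + L u = 0$ on $[0,T) \times \R^d$ with terminal condition $u(T,\cdot) = f$, where $L = \psca{b,\nabla} + \tfrac12 \mathrm{tr}(\sigma\sigma^* \nabla^2)$. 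The weak error then rewrites as $\esp{u(T,\bar X^h_T)} - u(0,x_0)$, and It\^o's formula applied to $u(\cdot,\bar X^h_\cdot)$ together with $\partial_t u + L u = 0$ yields
\[
	\esp{f(\bar X^h_T)} - \esp{f(X_T)} = \esp{\int_0^T \pa{L_{\underline{s}} - L_s} u(s, \bar X^h_s) \d s},
\]
where $L_s$ (resp.\ $L_{\underline{s}}$) denotes the generator with coefficients evaluated at $\bar X^h_s$ (resp.\ at the frozen point $\bar X^h_{\underline{s}}$).

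The key step is to expand the integrand on each elementary interval $[kh,(k+1)h]$, on which $\bar X^h_s - \bar X^h_{\underline{s}} = b(\bar X^h_{\underline{s}})(s - \underline{s}) + \sigma(\bar X^h_{\underline{s}})(W_s - W_{\underline{s}})$. Iterated It\^o expansions of the smooth functions $b \circ \bar X^h$ and $(\sigma\sigma^*) \circ \bar X^h$ composed with the smooth derivatives of $u$ produce a representation of $(L_{\underline{s}} - L_s) u(s, \bar X^h_s)$ as a polynomial in $h$ with coefficients of the form $\varphi_j(s,\bar X^h_s)$, smooth and of polynomial growth in $x$, plus a remainder of the target order (controlled via the polynomial growth of the derivatives of $u$ and $\L^p$--moment bounds on $\bar X^h$ uniform in $h$). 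Summing over the $n = T/h$ subintervals, the leading contributions take the form of Riemann sums that reduce, up to $O(h^{k+1})$, to integrals $h^k \int_0^T \esp{\varphi_j(s,X_s)} \d s$. An inductive bootstrap --- at each order, replacing $\bar X^h$ by $X$ introduces a new error that itself can be re-expanded by the same procedure --- produces the full expansion $\sum_{k=1}^R c_k h^k + O(h^{R+1})$ with coefficients $c_k$ manifestly depending only on $b, \sigma, f, T$.

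For part~$(b)$, the sole obstacle is that $f$ is merely bounded Borel, so $u(t,\cdot) = P_{T-t} f$ is no longer smooth up to $t = T$. The decisive input is the regularization property of the semigroup: under uniform ellipticity or the strong H\"ormander condition, the transition density $p(s,T,x,y)$ is $C^\infty$ in $x$ on $[0,T) \times \R^d$ and its spatial derivatives satisfy Gaussian-type upper bounds with singular time factors of order $(T-s)^{-k/2}$. The strategy is then to split the time integral in the It\^o representation above at $T - c h$: on $[0, T - c h]$ the smooth-case argument applies unchanged after replacing $f$ by $u(T - c h, \cdot)$, while on $[T - c h, T]$ the singular factors are integrated against an interval of length $O(h)$ and, combined with a crude bound for the few remaining Euler steps, contribute at the right order~$h^{R+1}$. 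Establishing these parametrix estimates (elliptic case) or invoking Malliavin calculus with integration by parts (hypo-elliptic case) is the technical core of~\cite{BATA}, and is where the main difficulty lies.
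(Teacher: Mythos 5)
The paper does not prove this theorem: it is recalled verbatim from the literature, part $(a)$ being Talay--Tubaro~\cite{TATU} and part $(b)$ Bally--Talay~\cite{BATA}, so there is no in-paper argument to compare against. Your sketch is a faithful outline of the standard proof from those references: the identity $\esp{f(\bar X^h_T)}-\esp{f(X_T)}=\esp[1]{\int_0^T(L_{\underline s}-L_s)u(s,\bar X^h_s)\,\d s}$ obtained from the backward Kolmogorov equation and It\^o's formula on the continuous Euler scheme is correct (the stochastic integral is a true martingale thanks to the polynomial growth of $\nabla u$ and the uniform-in-$h$ moment bounds), and the recursive bootstrap --- re-expanding the error committed when replacing $\bar X^h$ by $X$ in the Riemann sums --- is exactly how Talay--Tubaro reach arbitrary order $R$. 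The one place where the sketch is too quick is part $(b)$: asserting that on $[0,T-ch]$ ``the smooth-case argument applies unchanged after replacing $f$ by $u(T-ch,\cdot)$'' hides the fact that the derivatives of $u(T-ch,\cdot)$ blow up like $h^{-k/2}$, so one must track these singular factors through the whole expansion and show that the limiting coefficients $c_k=\int_0^T\esp{\varphi_k(s,X_s)}\,\d s$ remain finite despite the $(T-s)^{-k/2}$ singularities of the integrands near $s=T$; this bookkeeping, together with the Malliavin integration-by-parts estimates that produce the bounds $\abs[1]{\partial_x^{\gamma}P_{T-s}f(x)}\le C\normsup{f}(T-s)^{-|\gamma|/2}$, is the actual content of~\cite{BATA} and cannot be dispatched by the splitting argument alone. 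As a self-declared sketch deferring that technical core to the cited reference, your proposal is acceptable.
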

Other results based on the direct expansion of the density of the Euler scheme allow to deal with a drift $b$ with linear growth (see~\cite{KOMA}, in a uniformly elliptic setting, see also~\cite{JulGuy} at order $1$ in a tempered distribution framework). It is commonly shared by the ``weak error community",  relying on an analogy with recent results  on the existence of smooth density from the diffusion,  that if the hypo-ellipticity assumption is satisfied except at finitely many points that are never visited by the diffusion, then the claim~$(b)$ remains true. The boundedness assumption on  $\sigma$ is probably more  technical than a mandatory assumption. For a recent  review on weak error, we refer to~\cite{Jourdain}.

To deal with our abstract multilevel framework, we consider for a fixed horizon $T>0$, the family of Euler schemes $\bar X^h$ with  step $h\!\in \Hr=\{\frac Tn,\, n\ge 1\}$. We set $Y_h = f(\bar X^h_T)$ and $Y_0 = f(X_T)$ for a function $f$ either smooth enough with polynomial growth or simply  Borel and bounded,   depending on the smoothness of $b$ and $\sigma$ and the (hypo-)ellipticity of $\sigma$. The above theorem says that condition~\eqref{weak_error} is satisfied with $\bar R = +\infty$ and $\alpha = 1$. However, for a fixed $\bar R$,  the differentiability assumption on $b$, $\sigma$ and $f$  can be relaxed by simply assuming that these three functions are ${\cal C}_b^{\bar R+5}$ on $[0,T]\times \R^d$. 

On the other hand, as soon as $f:\R^d \to \R $ is Lipschitz continuous, it is classical results that~\eqref{strong_error} is satisfied with $\beta = 1$ as an easy consequence of the fact that the (continuous) Euler scheme  $\bar X^h$ converges for the sup-norm toward $X$ in  $\L^2$ (in fact in every  $\L^p$-space) at rate $\sqrt{h}$ as the step $h$ goes to $0$. 

In such a setting, we can implement multilevel estimators with $\alpha= \beta=1$.

\paragraph{Milstein scheme} The Milstein scheme is a second order scheme  which satisfies~\eqref{strong_error} with $\beta = 2$ and~\eqref{weak_error} still with $\alpha = 1$   (like the Euler scheme). Consequently, provided  it can be implemented, the resulting multilevel estimators   should be   designed with these parameters. 

However, the main drawback of the Milstein scheme when the $SDE$ is driven by a multidimensional Brownian motion ($q\ge 2$), is that  it  requires the simulation of L\'evy areas, for which there is no known efficient method (except in dimension 2). In a recent work~\cite{GILESZPRUCH}, Giles and Szpruch introduce  a suitable {\em antithetic multilevel correction estimator} which  avoids the simulation of these L\'{e}vy areas. This approach can be easily combined with our weighted version of \MLMC.

Note that in the $\beta > 1$ case, Rhee and Glynn introduced in \cite{Rhee} a class of finite-variance optimally randomized multilevel estimators which are unbiased with a square root convergence rate.

\paragraph{Path-dependent functionals} When a functional $F: {\cal C}([0,T], \R^d)\to \R$  is Lipschitz continuous for the sup-norm, it is straightforward that $F(\bar X^h)$ and $F(X)$ satisfy~\eqref{strong_error}, with $\beta = 1$ and  $\Hr = \{\frac Tn, \,n\ge 1\}$, (but this is no longer true if one considers the {\em stepwise constant} Euler scheme since the rate of convergence is then $\sqrt{\log n/n}\asymp \sqrt{-h\log h}$). More generally, if $F$ is $\beta$-H\" older, $\beta\!\in (0,1]$, then this family satisfies~\eqref{strong_error}. High order expansions of the weak error are not available in the general case, however first order expansion have been established for specific functionals like $F(\w)= f\Big(\int_0^T \w(s)ds\Big)$ or $F(\w)= f(\w(T))\mbox{\bf 1}_{\{\tau_D(\w) >T\}}$ where $\tau_D(\w)$ is the exit time of a domain $D$ of $\R^d$ showing that~\eqref{weak_error} holds with $\alpha = 1$ and $\bar R = 1$ (see $e.g.$~\cite{LATE, GOB}). 
More recently, new results on first order weak error expansions have been obtained for functionals of the form $F(\w)= f\pa[1]{\w(T), \sup_{t\in[0,T]}\w(t)}$ (see~\cite{GILES3} and~\cite{AJKH}). Thus,  for the weak error expansion, it is shown in~\cite{AJKH} that, for every $\eta>0$, there exists a real constant $C_{\eta}>0$ such that
\[
	\abs{\esp[1]{f\big(X_{_T}, \sup_{t\in [0,T]} X_t\bigr)} - \esp[1]{f\big(\bar X^n_{_T}, \sup_{t\in [0,T]} \bar X^n_t\big)}} \le \frac{C_{\eta}}{N^{\frac{2}{3}-\eta}}.
\]
For a review of recent results on approximation of solutions of SDEs, we again refer to~\cite{Jourdain}.

\begin{Remark} Note that, as concerns the \MLMC estimator, in the general setting of the discretization of a Brownian diffusion by an Euler scheme, a Central Limit Theorem (with stable weak convergence) has been obtained in~\cite{KEBAIERBENA}. In fact  both     the \MLRR and   \MLMC estimators attached to the design matrices~\eqref{tempMLRR}  and~\eqref{tempMLMC} satisfy, under a sharp version of~\eqref{strong_error}, a    Central Limit Theorem (see~\cite{Giorgi}) as $\varepsilon\to 0$. In the case of \MLRR it requires an in-depth analysis of the asymptotic behaviour of the weight vector $(\W_i)= (\W^{\alpha,R}_i)_{1\le i\le R}$ as $R$ goes to $\infty$. 
\end{Remark}

\subsection{Nested Monte Carlo} \label{sec:nested}
The purpose of the so-called {\em nested} Monte Carlo method is to compute by simulation quantities of the form
\begin{equation*}
        \esp{f \pa[1]{\espc{X}{Y}} }
\end{equation*}
where $(X,Y)$ is a couple of $\R\times \R^{q_{_Y}}$-valued random  variable defined on a probability space $(\Omega,{\cal A}, \P)$ with  $X\!\in \L^2(\P)$ and $f:\R\to \R$ is a Lipschitz continuous function  with Lipschitz coefficient $[f]_{\rm Lip}$. Such quantities often appear in financial applications, like compound option pricing or risk  estimation (see~\cite{BROADIE}) and in actuarial sciences  (see~\cite{LOI}) where nested Monte Carlo is widely implemented. The idea of replacing conditional expectations by Monte Carlo estimates also appears in~\cite{Belo} where the authors derive a multilevel dual Monte Carlo algorithm for pricing American style derivatives.

We make the following more stringent assumption: there exists a Borel function
$F:\R^{q_{_Z}}\times \R^{q_{_Y}} \to \R$ and a  
random variable $Z: (\Omega,{\cal A})\to \R^{q_{_Z}}$ independent of $Y$ such that
\[
X= F(Z,Y).
\]
Then, if $X\!\in \L^2$, one has the following representation
\[
        \espc{X}{Y}(\omega)= \Big( \esp{F(Z,y)} \Big)_{|y=Y(\omega)} =  
\int_{\R^{q_{_Z}}} F(z,Y(\omega)) \P_{Z}(dz).
\]
To comply with the multilevel framework, we set
\[
        \Hr = \{ 1/K, \, K\ge 1\}, \quad Y_0 = f\pa[1]{\espc{X}{Y}}, \quad   
Y_{\frac 1 K} =f\pa{\frac 1K \sum_{k=1}^K F(Z_k,Y)}
\]
where $(Z_k)_{k\ge 1}$ is an \iid sequence of copies of $Z$ defined on $(\Omega,{\cal A}, \P)$ and independent of $Y$ (up to an enlargement of the probability space if necessary).

The following proposition shows that the nested Monte Carlo method is eligible for multilevel simulation when $f$ is regular enough with the same parameters as the Euler scheme for Brownian diffusions.

\begin{Proposition}  Assume $X\!\in \L^{2 R}$. If $f$ is Lipschitz continuous and $2 R$ times  differentiable with $f^{(k)}$ bounded, $k=R,\ldots,2R$, the nested  Monte Carlo satisfies~\eqref{strong_error} with $\beta = 1$ and ~\eqref{weak_error} with $\alpha =1$ and $\bar R = R-1$.  
\end{Proposition}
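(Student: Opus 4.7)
The plan is to establish the two conditions separately. First I introduce the conditional centered variables $U_k := F(Z_k, Y) - \espc{X}{Y}$, $k \ge 1$, and their partial mean $S_K := \frac{1}{K}\sum_{k=1}^K U_k$, so that $Y_{1/K} = f\pa{\espc{X}{Y} + S_K}$ and $Y_0 = f\pa{\espc{X}{Y}}$. Since $(Z_k)_{k\ge 1}$ is independent of $Y$, conditionally on $Y$ the $U_k$ are iid and centered; since $X = F(Z,Y) \in \L^{2R}$, the conditional moments $m_r(Y) := \espc{U_1^r}{Y}$ are well defined, measurable in $Y$ and integrable for $1 \le r \le 2R$. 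The bias parameter is $h := 1/K$.

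\emph{Strong error.} The Lipschitz bound yields $(Y_{1/K}-Y_0)^2 \le [f]_{\rm Lip}^2 S_K^2$, and conditioning on $Y$ together with the iid centered structure of $U_1,\dots,U_K$ gives $\espc{S_K^2}{Y} = \var(X|Y)/K$. Integrating produces $\normLp{2}{Y_{1/K}-Y_0}^2 \le [f]_{\rm Lip}^2 \var(X)/K$, which is exactly $(SE_{1})$ with $V_1 = [f]_{\rm Lip}^2 \var(X)$.

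\emph{Weak error expansion.} Next I apply Taylor's formula to $f$ around $\espc{X}{Y}$ at order $2R$:
\begin{equation*}
	f(\espc{X}{Y}+s) = f(\espc{X}{Y}) + \sum_{j=1}^{2R-1} \frac{f^{(j)}(\espc{X}{Y})}{j!} s^j + \rho_{2R}(s,Y), \qquad |\rho_{2R}(s,Y)| \le \frac{\normsup{f^{(2R)}}}{(2R)!}|s|^{2R},
\end{equation*}
substitute $s = S_K$ and take the conditional expectation given $Y$ (the $j = 1$ contribution vanishes by centering), and then invoke the standard set-partition formula for iid centered sums,
\begin{equation*}
	\espc{S_K^j}{Y} = \sum_{m=\lceil j/2\rceil}^{j-1} \frac{P_{j,m}\pa{m_2(Y), \ldots, m_j(Y)}}{K^m},
\end{equation*}
where $P_{j,m}$ is a universal polynomial with integer coefficients (only partitions of $\ac{1,\ldots,j}$ whose blocks have size $\ge 2$ contribute, since $m_1(Y) = 0$). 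Collecting powers of $h = 1/K$ up to order $R-1$ after integrating in $Y$, and folding every higher-order contribution together with the Lagrange tail $\esp{|\rho_{2R}(S_K,Y)|} \le \frac{\normsup{f^{(2R)}}}{(2R)!}\esp{S_K^{2R}} = O(K^{-R})$ into a single remainder, yields
\begin{equation*}
	\esp{Y_{1/K}}-\esp{Y_0} = \sum_{k=1}^{R-1} c_k h^k + h^{R-1}\eta_{R-1}(h), \qquad \lim_{h\to 0}\eta_{R-1}(h)=0,
\end{equation*}
which is $(WE_{1,R-1})$.

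\emph{Main obstacle.} The partition-combinatorial expansion of $\espc{S_K^j}{Y}$ in powers of $1/K$ is routine; the delicate step I expect is proving integrability of each coefficient $c_k$. Every $c_k$ is a finite sum of expectations of the form $\esp{f^{(j)}(\espc{X}{Y})\prod_\ell m_{r_\ell}(Y)}$ with $2\le j \le 2k \le 2(R-1)$ and $\sum_\ell r_\ell = j$; H\"older combined with $X \in \L^{2R}$ keeps each product $\prod_\ell m_{r_\ell}(Y)$ in $\L^1$, for $j \ge R$ the derivative $f^{(j)}$ is bounded by assumption, and for $j \le R-1$ the polynomial growth of $f^{(j)}$ (obtained by integrating the bounded $f^{(R)}$ backward, while $f'$ remains bounded by the Lipschitz assumption) is absorbed by a second application of H\"older together with $\espc{X}{Y} \in \L^{2R}$.
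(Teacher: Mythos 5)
Your proof is correct and follows essentially the same route as the paper: the Lipschitz bound plus the conditional variance identity $\espc{S_K^2}{Y}=\var(X\,|\,Y)/K$ for the strong error, and a Taylor expansion at $\espc{X}{Y}$ combined with the expansion of the conditional moments of the centered i.i.d.\ sum in powers of $1/K$ for the weak error. The only (cosmetic) differences are that you package the moment combinatorics via set partitions and bound the Lagrange tail by the even moment $\esp{S_K^{2R}}$ where the paper uses the multinomial formula, an explicit expansion of $\binom{K}{\ell}$, and the Marcinkiewicz--Zygmund inequality; your explicit H\"older/polynomial-growth check of the integrability of the coefficients $c_k$ for $j\le R-1$ is a point the paper leaves implicit.
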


\begin{Remark} When $f$ is no longer smooth, typically if it is the indicator function of an interval, it is still possible to show that nested Monte Carlo is eligible for multilevel Richardson-Romberg approach $e.g.$ in  the more constrained  framework developed in~\cite{JHJ, GORJU} where $X$ can be viewed as  an additive perturbation of $Y$.  Assuming enough regularity in $y$ on the joint density $g_N(y,z)$ of $Y$ and the renormalized perturbation,  yields an expansion of the weak error (but seems in a different scale). However, in this work we focus on the regular case (see~\cite{VLGP2} for the non regular case and applications in actuarial sciences).
\end{Remark}

The proof follows from the two lemmas below.

\begin{Lemma}[Strong approximation error]  Assume $f$ is Lipschitz continuous. For every $h,\,h'\!\in \Hr \cup\{0\}$,
\begin{equation} \label{eq:result_strong_nested}
	\normLp{2}{Y_{h'}-Y_h}^2 \le [f]^2_{\rm Lip} \pa{\normLp{2}{X}^2 - \normLp{2}{\espc{X}{Y}}^2} |h'-h|.
\end{equation}
so that $(Y_h)_{h\in \Hr}$ satisfies~\eqref{strong_error} with $\beta=1$ and the alternative assumption \eqref{var_error'} from Remark~\ref{rq:strong_assumption}).
\end{Lemma}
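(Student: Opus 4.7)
The plan is to apply the Lipschitz property of $f$ pointwise, then exploit the conditional i.i.d.\ structure of the inner Monte Carlo sample to compute a conditional variance explicitly, and finally integrate out $Y$.

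First, I would start from the definitions $Y_0 = f\bigl(\E[X \mid Y]\bigr)$ and $Y_h = f\bigl(\tfrac{1}{K}\sum_{k=1}^K F(Z_k,Y)\bigr)$ with $h = 1/K$, and use $|f(a)-f(b)| \le [f]_{\rm Lip} |a-b|$ to get
\begin{equation*}
    \normLp{2}{Y_0 - Y_h}^2 \le [f]_{\rm Lip}^2\, \E\br[2]{\abs[2]{\E[X\mid Y] - \tfrac{1}{K}\sum_{k=1}^K F(Z_k,Y)}^2}.
\end{equation*}

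Next, the key step is conditioning on $Y$. Since $(Z_k)_{k\ge 1}$ is an i.i.d.\ sequence of copies of $Z$ and is independent of $Y$, conditionally on $Y=y$ the random variables $F(Z_k,y)$ are i.i.d.\ with mean $\E[F(Z,y)] = \E[X \mid Y](y)$. Therefore $\tfrac{1}{K}\sum_{k=1}^K F(Z_k,Y)$ is, conditionally on $Y$, a Monte Carlo estimator of $\E[X\mid Y]$ whose conditional variance is $\tfrac{1}{K}\var(F(Z,Y)\mid Y) = \tfrac{1}{K}\bigl(\E[X^2 \mid Y] - \E[X\mid Y]^2\bigr)$. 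Hence
\begin{equation*}
    \E\br[2]{\abs[2]{\E[X\mid Y] - \tfrac{1}{K}\sum_{k=1}^K F(Z_k,Y)}^2 \Big| Y} = \frac{1}{K}\pa[1]{\E[X^2\mid Y] - \E[X\mid Y]^2}.
\end{equation*}

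Finally, taking total expectation and using the tower property gives $\E\bigl[\E[X^2\mid Y]\bigr] = \normLp{2}{X}^2$ and $\E\bigl[\E[X\mid Y]^2\bigr] = \normLp{2}{\E[X\mid Y]}^2$, so
\begin{equation*}
    \normLp{2}{Y_0 - Y_h}^2 \le [f]_{\rm Lip}^2 \pa{\normLp{2}{X}^2 - \normLp{2}{\E[X\mid Y]}^2} h,
\end{equation*}
which is \eqref{eq:result_strong_nested}, and this corresponds to \eqref{strong_error} with $\beta = 1$ and $V_1 = [f]_{\rm Lip}^2(\|X\|_2^2 - \|\E[X\mid Y]\|_2^2)$. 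I do not expect any serious obstacle here: the whole argument is a clean variance computation for an i.i.d.\ mean, made tractable by the representation $X = F(Z,Y)$ with $Z \perp Y$. The only point to be careful about is that $X\!\in \L^2$ is enough to ensure the conditional variance identity is finite (so the inequality is nontrivial), which is already assumed in the setup.
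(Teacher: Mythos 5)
Your proof is correct and follows essentially the same route as the paper: both reduce to bounding $\normLp[2]{2}{\espc{X}{Y} - \tfrac 1K\sum_{k=1}^K F(Z_k,Y)}^2$ via the Lipschitz property and then exploit the conditional i.i.d.\ structure given $Y$ to identify this quantity as $\tfrac 1K\pa[1]{\normLp{2}{X}^2 - \normLp{2}{\espc{X}{Y}}^2}$. The only cosmetic difference is that you invoke the conditional variance of an i.i.d.\ mean directly, whereas the paper expands the square and evaluates the cross terms by conditioning on $Y$ one by one.
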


\begin{proof}
    Let $h=\frac 1K$, $h'=\frac{1}{K'}$, $K$, $K'\in \N^*$, $K\le K'$. Now set for convenience $\widetilde X_k = F(Z_k,Y)-\espY{F(Z_k,Y}$, $M_k=  \sum_{\ell=1}^k \widetilde X_k$ and ${\cal G}_{k} = \sigma(Y, Z_1,\ldots,Z_k)$, $k\ge 0$. It is clear that $(M_k)_{k\ge 0}$ is  a square integrable martingale (null at time $0$) satisfying $\espc{(M_k-M_{k-1})^2}{{\cal G}_{k-1}} = \pa{\normLp{2}{X}^2 - \normLp{2}{\espc{X}{Y}}^2}$. Elementary computations yield for every integers $K' \ge K \ge 1$, 
\begin{align*}
	\normLp{2}{Y_h - Y_{h'}}^2 &= \normLp[4]{2}{f\pa[3]{ \frac{1}{K'} \sum_{k=1}^{K'} F(Z_k, Y)} - f\pa[3]{\frac{1}{K} \sum_{k=1}^K F(Z_k, Y)}   }^2 \\
	&\le [f]^2_{\rm Lip} \normLp[3]{2}{\frac{1}{K'} \sum_{k=1}^{K'} F(Z_k, Y)  - \frac{1}{K} \sum_{k=1}^K F(Z_k, Y)}^2\\
	&=  [f]^2_{\rm Lip} \normLp[3]{2}{\frac{1}{K'} \sum_{k=1}^{K'} \widetilde X_k  - \frac{1}{K} \sum_{k=1}^K \widetilde X_k}^2=  [f]^2_{\rm Lip} \normLp[3]{2}{\frac{M_{K'}}{K'}   - \frac{M_K}{K}}^2
\end{align*}
since $\espY{F(Z_k,Y)} = \espY{F(Z,Y)}$ does not depend on $k$ owing to the independence of $(Z_k)_{k\ge 1}$ and $Y$. Then elementary computations show that 
\[
\normLp[3]{2}{\frac{M_{K'}}{K'}   - \frac{M_K}{K}}^2= \frac{K'-K}{KK'}  \normLp{2}{\tilde X_k}^2= (h-h')\pa{\normLp{2}{X}^2 - \normLp{2}{\espc{X}{Y}}^2}.
\]
The case $h'=0$ can be treated likewise (or by letting $K'$ go to infinity).
\end{proof}

\begin{Lemma}[Weak error]  Let $f:\R\to \R$ be a $2R$ times differentiable function with $f^{(k)}$, $k=R,\ldots,2R$, bounded over the real line. Assume $X\!\in \L^{2R}(\P)$. Then there exists $c_1,\ldots,c_{R-1}$ such that
\begin{equation} \label{eq:result_weak_nested}
	\forall h \in \Hr, \quad \esp{Y_h} = \esp{Y_0} + \sum_{r=1}^{R-1} c_r h^r + O\big(h^R\big).
\end{equation}
Consequently $(Y_h)_{h\in \Hr}$ satisfies~\eqref{weak_error} with $\alpha =1$ and $\bar R = R-1$.
\end{Lemma}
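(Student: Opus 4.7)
The strategy is to Taylor expand $f$ at the conditional mean $\espY{X} := \espc{X}{Y}$ and compute the conditional central moments of the Monte Carlo sample. Set $\xi_k = F(Z_k, Y) - \espY{X}$ so that, conditionally on $Y$, the family $(\xi_k)_{k \ge 1}$ is \iid, centered, and lies in $\L^{2R}$ (since $X \in \L^{2R}$ forces $F(Z,Y) \in \L^{2R}$ by Jensen's inequality). Writing $\bar F_K = \frac{1}{K}\sum_{k=1}^K F(Z_k, Y)$ and $S_K = \bar F_K - \espY{X} = \frac{1}{K}\sum_{k=1}^K \xi_k$, Taylor's formula at order $2R-1$ with Lagrange remainder yields
\[
f(\bar F_K) = \sum_{n=0}^{2R-1} \frac{f^{(n)}(\espY{X})}{n!}\, S_K^n + \frac{f^{(2R)}(\theta_K)}{(2R)!}\, S_K^{2R},
\]
where $\theta_K$ lies between $\espY{X}$ and $\bar F_K$. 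Conditioning on $Y$, the $n=1$ term vanishes since $\espc{\xi_k}{Y} = 0$, and the remainder is bounded in modulus by $\frac{\|f^{(2R)}\|_\infty}{(2R)!}\espc{S_K^{2R}}{Y}$.

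The central combinatorial step is to expand each conditional moment $\espc{S_K^n}{Y}$ as a polynomial in $1/K$. Expanding multinomially and grouping the indices $(k_1,\ldots,k_n)$ by the partition they induce on $\{1,\ldots,n\}$, only partitions whose blocks all have size $\ge 2$ contribute (each singleton block yields a factor $\espc{\xi_j}{Y}=0$). A partition with $p$ blocks ($1 \le p \le \lfloor n/2 \rfloor$) is visited by $K(K-1)\cdots(K-p+1)$ index assignments, a polynomial of degree $p$ in $K$. Dividing by $K^n$ produces contributions at the powers $K^{-r}$ for $r \in \{n-p, \ldots, n-1\}$, and summing over $p$ gives
\[
\espc{S_K^n}{Y} = \sum_{r = \lceil n/2 \rceil}^{n-1} \frac{a_{n,r}(Y)}{K^r},
\]
where $a_{n,r}(Y)$ is a universal polynomial in the conditional moments $\espc{\xi^k}{Y}$, $2 \le k \le n$. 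Plugging this into the conditional Taylor formula and collecting by powers of $h = 1/K$,
\[
\espc{f(\bar F_K)}{Y} = f(\espY{X}) + \sum_{r=1}^{R-1} \Psi_r(Y)\, h^r + \rho(Y, K),
\]
with $\Psi_r(Y) = \sum_{n=r+1}^{2r} \frac{f^{(n)}(\espY{X})}{n!}\, a_{n,r}(Y)$ and the leftover pieces (from $n=2R-1$, whose expansion already starts at $K^{-R}$, together with the Taylor remainder) absorbed into $\rho(Y,K)$.

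Taking expectation in $Y$ yields the announced expansion with $c_r := \esp{\Psi_r(Y)}$, provided each $c_r$ and the remainder are finite and of the right order. For $n \ge R$, $f^{(n)}$ is bounded by hypothesis. For $2 \le n < R$, iterated integration of $\|f^{(R)}\|_\infty < \infty$ yields the polynomial growth bound $|f^{(n)}(x)| \le C_n(1 + |x|^{R-n})$. Combined with $\espY{X} \in \L^{2R}$ (Jensen and $X \in \L^{2R}$) and H\"older applied to the product $f^{(n)}(\espY{X})\, a_{n,r}(Y)$, each coefficient $c_r$ is finite. For the remainder, $\esp{|\rho(Y, K)|} \le C\, \esp{S_K^{2R}}$; since the partition expansion of $\espc{S_K^{2R}}{Y}$ starts at the power $K^{-R}$ (or more directly, by the Marcinkiewicz--Zygmund inequality), $\esp{S_K^{2R}} = O(K^{-R})$, which gives the claimed $O(h^R)$ control. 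The main obstacle is purely bookkeeping: tracking the Stirling-number contributions of $K(K-1)\cdots(K-p+1)$ that mix into each $a_{n,r}$, and verifying that the conditional-moment products $\prod_B \espc{\xi^{|B|}}{Y}$ which appear stay integrable after expectation in $Y$, all of which ultimately reduces to the single hypothesis $X \in \L^{2R}$.
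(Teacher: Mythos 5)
Your proof is correct and follows essentially the same route as the paper's: a Taylor expansion of $f$ at the conditional mean to order $2R-1$, a combinatorial expansion of the conditional central moments of the sample mean in which only index configurations with no singleton exponent survive, a re-expansion of the falling factorial $K(K-1)\cdots(K-p+1)$ in powers of $K$ to regroup the contributions by powers of $h=1/K$, and a Marcinkiewicz--Zygmund bound for the order-$2R$ remainder. Your explicit check that the coefficients $c_r$ are integrable (via the polynomial growth bound $|f^{(n)}(x)|\le C_n(1+|x|^{R-n})$ for $n<R$ together with $X\in\L^{2R}$) is a small but welcome refinement of a point the paper leaves implicit.
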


\begin{proof}
Let $K\ge 1$ and $\widetilde X_k = F(Z_k,Y)- \espY{F(Z_k,Y)} =F(Z_k,Y)-Y_0$, $k=1,\ldots,K$. By the multinomial formula, we get
\[
(\widetilde X_1+\cdots+\widetilde X_{_K})^k = \sum_{k_1+\cdots+k_K=k}  
\frac{k!}{k_1!\cdots k_K!}\widetilde X_1^{k_1}\cdots \widetilde  
X_K^{k_K}.
\]
Then, taking conditional expectation given $Y$, yields
\[
	\espY{(\widetilde X_1+\cdots+\widetilde X_{_K})^k} = k!  
	\sum_{k_1+\cdots+k_K=k} \prod_{i=1}^K \frac{\espY[1]{\widetilde X^{k_i}}}{k_i!}
\]
since $\espY[1]{\widetilde X_i^{k_i}} = \espY[1]{\widetilde X^{k_i}}$. As $\espY[1]{\widetilde X_i} = 0$, we obtain 
\[
	\espY{(\widetilde X_1+\cdots+\widetilde X_{_K})^k} = k!  
\sum_{k_1+\cdots+k_K=k, \, k_i\neq 1} \prod_{i=1}^K \frac{\espY[1]{\widetilde X^{k_i}}}{k_i!}.
\]

Let $I=I(k)$ denote the generic set of indices $i$ such that $k_i\neq  0$. It is clear that $1\le |I|\le k/2$. By symmetry, we have now that
\begin{align*}
	\sum_{k_1+\cdots+k_K=k, \, k_i\neq 1} \prod_{i=1}^K \frac{\espY[1]{\widetilde X^{k_i}}}{k_i!}&= \sum_{1\le \ell\le (k/2)\wedge  
K}\sum_{
    \substack{I\subset\{1,\ldots,K\}, |I|=\ell, \\ \sum_{i\in I} k_i =k, k_i  
\ge 2}}  \prod_{i=1}^K \frac{\espY[1]{\widetilde X^{k_i}}}{k_i!}\\
&= \sum_{1\le \ell\le k/2}\Big(\begin{array}{c}K\\  
\ell\end{array}\Big)\sum_{  \sum_{1\le i\le \ell } k_i =k-2\ell}  
\prod_{i=1}^\ell\frac{\espY[1]{\widetilde X^{2+k_i}}}{(2+k_i)!}.
\end{align*}
As a consequence, for every integer $R\ge 1$,
\begin{align*}
	\espY[1]{Y_h} &= \espY[1]{Y_0}  +\sum_{k=1}^{2R-1} \frac{f^{(k)}\big(  
\espY{X}\big)}{k! K^k} \E^Y (\widetilde X_1+\cdots+\widetilde X_{_K})^k  
  + {\mathbf R}_{2R-1}(Y) \\
  &= \espY{Y_0}  +\sum_{k=1}^{2R-1} \frac{ f^{(k)}\big(  
  \espY{X} \big)}{k! K^k}  \sum_{1\le \ell\le (k/2)\wedge  
K}\Big(\begin{array}{c}K\\ \ell\end{array}\Big)c_{k,\ell}+ {\mathbf  
R}_{2R-1}(Y)
\end{align*}
where
\[
a_{k,\ell} = \sum_{k_1+\cdots+k_\ell =k-2\ell}  
\;\prod_{i=1}^\ell\frac{\espY[1]{\widetilde X^{2+k_i}}}{(2+k_i)!}
\]
and
\begin{equation*}
|{\mathbf R}_{2R-1}(Y)| \le \frac{ \|f^{(2R)}\|_{\rm sup}}{(2R)!}  
	\frac{1}{K^{2R}} \espY{ \big| \widetilde X_1+\cdots+\widetilde  
	X_{K}\big|^{2R}}.
\end{equation*}
By the Marcinkiewicz-Zygmund Inequality we get
\begin{align*}
|{\mathbf R}_{2R-1}(Y)|&\le  (B^{MZ}_{2R})^{2R}\frac{  
\|f^{(2R)}\|_{\rm sup}}{(2R)!} \frac{1}{K^{2R}} \espY{| \widetilde  
X^2_1+\cdots+\widetilde X^2_{K}\big|^{R} }            \\
&\le \|f^{(2R)}\|_{\rm sup} \frac{ (B^{MZ}_{2R})^{2R}}{(2R)!}  
\frac{1}{K^{R}} \espY[1]{\widetilde X^{2R}}
\end{align*}
where $B^{MZ}_{p} = 18\frac{p^{\frac 32}}{(p-1)^{\frac 12}}$, $p>1$  
(see~\cite{SHIR} p.499).
Now, we write the polynomial function $x(x-1)\cdots(x-\ell+1)$ on the canonical  
basis $1,x,\ldots,x^n$,\ldots  as follows
\[
x(x-1)\cdots(x-\ell+1)=\sum_{m=0}^{\ell} b_{\ell,m} x^m\qquad  
(b_{\ell,\ell}=1 \mbox{ and } b_{\ell,0}=0).
\]
Hence,
\begin{equation*}
	\espY{Y_h} = \espY{Y_0}  +\sum_{k=1}^{2R-1}\sum_{\ell =1}^{\frac  
	k2}\sum_{m=1}^{\ell} \frac{ f^{(k)}\big( \espY{X} \big)}{k!}  
\frac{1}{K^{k-m}}a_{k,\ell} b_{\ell,m} +O\big(K^{-R}\big)
  \end{equation*}
  where $K^RO(K^{-R})$ is bounded by a deterministic constant. For  
every $r\!\in \{1,\ldots, R-1\}$, set
  \[
  J_{R,r} = \big\{(k,l,m) \!\in \N^3,\; 1\le k\le 2R-1, \,  
1\le \ell\le k/2,\; 1\le m\le \ell, \; k=m+r\big\}
  \]
(note that one always has $k\ge (2 m)\vee 1$ so that $k-m\ge 1$ when  
$k,l,m$ vary in the admissible index set). We finally get
\begin{align*}
	\espY{Y_h} &= \espY{Y_0}  +\sum_{r=1}^{2R-1} \Big(\sum_{(k,\ell,m) \in  
J_{R,r}}   \hskip -0.25 cm\frac{ f^{(k)}\big( \espY{X}\big)}{k!}   
a_{k,\ell} b_{\ell,m}      \Big)\frac{1}{K^r}+O\big(K^{-R}\big).\\
&= \espY{Y_0}  +\sum_{r=1}^{R-1} \frac{c_r}{K^r}+O\big(K^{-R}\big).
  \end{align*}
Taking the expectation in the above equality yields the announced  
result.\end{proof}

\begin{Remark} 
	Though it is not the only term included in the final $O(K^{-R})$, it is worth noticing that $\Big( \frac{  (B^{MZ}_{2R})^{2R} }{ (2R)!}  \Big)^{\frac 1R} \sim  (36  R)^2\Big(\frac{2R}{e}\Big)^{-2}\sim 18\, e^{2}$ as $R\to+\infty$ owing  to Stirling's formula. This suggests that, if all the derivatives of  $f$ are uniformly bounded,  $\displaystyle \limsup_{R\to +\infty}| c_{_R}|^{\frac  1R}<+\infty$.
\end{Remark}

\section{Numerical experiments}\label{sec:NumEx}
\subsection{Practitioner's corner} \label{sec:practi}
We summarize here the study of the Section \ref{sec:MLRR}. We have proved in Theorem~\ref{thm:phaseI}, Proposition~\ref{thm:phaseII} and Theorem~\ref{prop:asympReps} that the asymptotic optimal parameters (as $\varepsilon$ goes to 0) $R$, $h$, $q$ and $N$ depend on structural parameters $\alpha$, $c_1$, $\beta$, $V_1$, $\var(Y_0)$ and ${\mathbf h}$ (recall that $\theta = \sqrt{V_1 / \var(Y_0)}$). Note that we did not optimize the design matrix $\mT$ and the refiners $n_i$, $i=2,\dots,R$.

\subsubsection*{About structural parameters}
Implementing \MLMC or \MLRR estimator needs to know both the weak and strong rates of convergence of the biased estimator $Y_h$ toward $Y_0$. The exponents $\alpha$ and $\beta$ are generally known by a mathematical study of the approximation (see Section \ref{sec:brown} for Brownian diffusion discretization and Section \ref{sec:nested} for nested Monte Carlo). The parameter $V_1$ comes from the strong approximation rate assumption~\eqref{strong_error} and a natural approximation for $V_1$ is 
\begin{equation*}
	V_1 \simeq \limsup_{h \to 0} h^{-\beta} \normLp{2}{Y_h-Y_0}^2
\end{equation*}
Since $Y_0$ cannot be simulated at a reasonable computational cost, one may proceed as follows to get a good empirical estimator of $V_1$. First, assume that, in fact, $\|Y_h-Y_0\|^2_2\sim V_1h^{\beta}$ as $h \to 0$ but that this equivalence still holds as an approximation for not too small parameters $h$. Then, one derives from Minkowski's Inequality that, for every integer $M\ge 1$,
\[
\big\|Y_h-Y_{\frac hM}\big\|_{_2} \le \big\|Y_h-Y_{0}\big\|_{_2}+\big\|Y_0-Y_{\frac hM}\big\|_{_2}
\]
so that 
\[
V_1\gtrsim (1+M^{-\frac{\beta}{2}})^{-2} h^{-\beta} \normLp{2}{Y_h - Y_{\frac{h}{M}}}^2.
\]
As a consequence, if we choose $M=M_{\max}$ large enough (see~\eqref{eq:M_max} below), we are led to consider the following estimator 
\begin{equation}\label{eq:hatV1}
	\widehat{V}_1(h) =  \big(1+M_{\max}^{-\frac {\beta}{2}}\big)^{-2} h^{-\beta}\|Y_h-Y_{\frac{h}{M_{\max}}}\|^2_{_2}.
\end{equation}
If the assumption \eqref{strong_error} is replaced by one of the alternative assumptions \eqref{var_error} and \eqref{var_error'} proposed in Remark~\ref{rq:strong_assumption}, the estimator of $V_1$ must be modified. For instance if we consider the assumption $\var\pa{Y_h-Y_{h'}} \le V_1 \abs{h-h'}^{\beta}$, a standard estimator of $V_1$ becomes $\hat V_1(h) = \pa{1- M^{-\beta}}^{-1} h^{-\beta} \var\pa{Y_h - Y_{\frac{h}{M}}}$, $M$ being fixed.

The estimation of the real constants $c_i$, $c_1$ for crude Monte Carlo and an \MLMC estimators and $\widetilde c = \lim_{R \to \infty} |c_{_R}|^{\frac{1}{R}}$ for the \MLRR estimator is much more  challenging. So, these methods are usually  implemented in a blind way by considering the coefficients $c_1$ and $|c_{_R}|^{\frac{1}{R}}$ equal to $1$.

Note that, even in a crude Monte Carlo method, such structural parameters are useful (and sometimes necessary) to deal with the bias error (see Proposition \ref{pro:MC}).

\subsubsection*{Design of the Multilevel}
The standard design matrix is fixed by the template~\eqref{tempMLRR} for the multilevel Richardson-Romberg estimator and by the template~\eqref{tempMLMC} for the multilevel Monte Carlo estimator. Alternative choices could be to consider for the \MLRR estimator another design matrix $\mT$ satisfying~\eqref{Abeta} like $\mT^j = -\w_j e_1 + \w_j e_j$ for $j\in\ac{2,\dots,R}$ which reads 
\begin{equation} \label{tempMLRR2}
	\mT = \pa{\begin{array}{ccccc}
			1 & -\w_2 & -\w_3 & \cdots & -\w_R \\
			0 & \w_2 & 0 & \cdots & 0 \\
				0 & 0 & \w_3 & \ddots & 0 \\
			\vdots  &\vdots  &\ddots & \ddots &\vdots \\
			0 & 0 & \cdots & 0 & \w_R \\
	\end{array}}.
\end{equation}
We could also consider a lower triangular design matrix (through it does not satisfy the conventional assumption $T^1=e_1$)
\begin{equation} \label{temMLRR3}
	\mT = \pa{\begin{array}{cccccc }
			\widetilde \W_1 & 0 & \cdots & \cdots & \cdots & 0 \\
			-\widetilde \W_1  & \widetilde \W_{2} & 0 & \cdots & \cdots & 0 \\
			\vdots & \ddots & \ddots & \ddots &\ddots & \vdots\\
			\vdots & & \ddots & \ddots & \ddots & 0\\
			0 & \cdots & \cdots & -\widetilde \W_{R-2} & \widetilde \W_{R-1} & 0 \\
			0 & \cdots & \cdots & \cdots & -\widetilde \W_{R-1} & 1 \\
	\end{array}} \quad 
	\text{where} \quad \widetilde \W_j = \sum_{k=1}^j \w_k.
\end{equation}
 
The refiners can be specified by users but it turns out that the parametrized family $n_i = M^{i-1}$, $i=1,\ldots,R$ ($M\!\in \N$, $M\ge 2$) seems the best compromise between variance control and implementability. The parameter $\alpha$ being settled, all the related quantities like $(\W_i(R,M))_{1\le i\le M}$ can be tabulated for various values of $M$ and $R$ and can be stored offline.

\subsubsection*{Taking advantage of $c_1=0$} When $c_1=0$, only $R-1$ weights are needed to cancel the (remaining) coefficients up to order $R$ \ie $c_r$, $r=2,\ldots, R-1$ (instead of $R$). One easily shows that, if $\big(\w^{(R-1)}_r\big)_{r=1,\ldots,R-1}$  denotes the weight vector at order $R-1$ associated to refiners $n_1=1<n_2,\ldots,n_{_{R-1}}$ (for a given $\alpha$), then the weight vector $\widetilde \w^{(R)}$ at order $R$ (with size $R-1$) reads
\[
\widetilde \w^{(R)}_r= \frac{n_r^{\alpha} \w^{(R-1)}_r}{\sum_{1\le s\le R-1}n_s^{\alpha} \w^{(R-1)}_s}, \;\; r=1, \ldots, R-1.
\]

\subsubsection*{Optimal parameters}
\paragraph{Diffusion approximation}
In the case $n_i = M^{i-1}$ (with the convention $n_0 = n_0^{-1} = 0$), we can summarize the asymptotic optimal value of the parameters $q$, $R$, $h$ and $N$ in Table~\ref{tab:opt_param_MLRR} for the~\eqref{tempMLRR} estimator and  in Table~\ref{tab:opt_param_MLMC} for the~\eqref{tempMLMC} estimator. 
\begin{table}[!ht] 
	\centering
	\begin{tabular}{c|c}
        $R(\varepsilon)$ & $\ds \left\lceil \frac 12 + \frac{\log\pa[1]{\widetilde c^{\frac{1}{\alpha}} \mathbf{h}}}{\log(M)} 
		+ \sqrt{ \pa[3]{\frac 12 + \frac{\log\pa[1]{\widetilde c^{\frac{1}{\alpha}} \mathbf{h}}}{\log(M)} }^2 + 2 \frac{\log\pa{A/\varepsilon}}{\alpha \log(M)}}\ \right\rceil, \quad A = \sqrt{1+4\alpha} $
	\\ 
	\midrule
    $h(\varepsilon)$ & $\ds {\mathbf h}/ \left\lceil(1+2 \alpha R)^{-\frac{1}{2 \alpha R}} \varepsilon^{\frac{1}{\alpha R}} \widetilde c^{-\frac{1}{\alpha}} M^{\frac{R-1}{2}} {\mathbf h}\right\rceil$  
	\\
	\midrule
	$q(\varepsilon)$ & 
	$ 
\begin{aligned}
	q_1 &= \mu_{_R}^* \big(1 +\theta h^{\frac \beta 2}\big) \\
	q_j &= \mu_{_R}^* \theta h^{\frac \beta 2} \pa{\abs{\W_j(R,M)} \frac{n_{j-1}^{-\frac{\beta}{2}} + n_j^{-\frac{\beta}{2}}}{\sqrt{n_{j-1}+n_j}}},\; j = 2,\dots,R;\; \mu_{_R}^*\;s.t.\,   \sum_{1\le j\le R}q_j= 1\\
\end{aligned}
		$
		\\
	\midrule
	$N(\varepsilon)$ & $\ds \pa{1+\frac{1}{2 \alpha R}} \frac{\ds \var(Y_0)
\pa{1 + \theta h^{\frac{\beta}{2}} \sum_{j=1}^R \abs{\W_j(R,M)} \pa{n_{j-1}^{-\frac{\beta}{2}} + n_j^{-\frac{\beta}{2}}}\sqrt{n_{j-1}+n_j}}} {\ds \varepsilon^2 \mu^*_{_R} }$ \\
\end{tabular}
\caption{Optimal parameters for the \MLRR estimator (standard case).}
\label{tab:opt_param_MLRR}\end{table}

\begin{table}[!ht]
	\centering
	\begin{tabular}{c|c}
		$R(\varepsilon)$ & $\ds \left\lceil 1 + \frac{\log\pa[1]{\abs{c_1}^{\frac{1}{\alpha}} \mathbf{h}}}{\log(M)} + \frac{\log(A / \varepsilon)}{\alpha \log(M)}\right\rceil, \quad A = \sqrt{1+2 \alpha} $
	\\ 
	\midrule
	$h(\varepsilon)$ & $\ds  {\mathbf h}/\left\lceil (1+2 \alpha)^{-\frac{1}{2 \alpha}} \varepsilon^{\frac{1}{\alpha}} \abs{c_1}^{-\frac{1}{\alpha}} M^{R-1}{\mathbf h} \right\rceil$	\\
	\midrule
	$q(\varepsilon)$ & 
	$ 
\begin{aligned}
	q_1 &= \mu_{_R}^* (1 +\theta h^{\frac \beta 2}) \\
	q_j &= \mu_{_R}^* \theta h^{\frac \beta 2} \pa{
\frac{n_{j-1}^{-\frac{\beta}{2}} + n_j^{-\frac{\beta}{2}}}{\sqrt{n_{j-1}+n_j}}}
, \; j = 2,\dots,R; \; \mu_{_R}^*\;s.t.\, \sum_{1\le j\le R}q_j= 1 \\
\end{aligned}
		$
		\\
	\midrule
	$N(\varepsilon)$ & $\ds \pa{1+\frac{1 }{2 \alpha}}\frac{\ds \var(Y_0)
\pa{1 + \theta h^{\frac{\beta}{2}} \sum_{j=1}^R \pa{n_{j-1}^{-\frac{\beta}{2}} + n_j^{-\frac{\beta}{2}}}\sqrt{n_{j-1}+n_j}}} {\ds \varepsilon^2 \mu_{_R}^*}$ \\
\end{tabular}
\caption{Optimal parameters for the \MLMC estimator (standard case).}
	\label{tab:opt_param_MLMC}
\end{table}

\paragraph{Nested Monte Carlo} In a Nested Monte Carlo framework, the unitary complexity is given by~\eqref{eq:nested_compl}. \begin{itemize}
    \item The unitary cost term $n_{j-1}+n_j$ in Tables~\ref{tab:opt_param_MLRR} and \ref{tab:opt_param_MLMC} must be replaced by $n_j$. 
    \item The unitary variance term $n_{j-1}^{-\beta/2}+n_j^{-\beta/2}$ must be replaced by $(\frac{1}{n_{j-1}}-\frac{1}{n_j})^{\beta/2}$.
    \end{itemize}

\paragraph{Optimization of the root $M$}
    Note that these optimal parameters given in the above Tables only depend on the structural parameters and on the user's choice of the root $M\ge 2$ for the refiners. For a fixed $\varepsilon > 0$, if we emphasize the dependance in $M = M(\varepsilon)$ \ie $R(M)$, $h(M)$, $q(M)$ and $N(M)$ the global cost $C_\varepsilon$ as a function of $M$ is given by  
\begin{equation} \label{cout_a_minimiser}
C_\varepsilon(M) = \Cost(\bar Y^{N(M), q(M)}_{h(M), \underline n}) = N(M) \cost(h(M), R(M), q(M)),
\end{equation}
where $\cost(h, R, q) = \frac{1}{h} \sum_{j=1}^R q_j \sum_{i=1}^R n_i \ind{\mT_i^j \neq 0}$ (in the framework of Section~\ref{sec:brown}) and $\cost(h, R, q) = \frac{1}{h} \sum_{j=1}^R q_j \max_{1 \le i \le R} n_i \ind{\mT_i^j \neq 0}$ (in the framework of Section~\ref{sec:nested}). 
This function can be optimized for likely values of $M$. In our numerical experiments, we consider
\begin{equation} \label{eq:M_max}
	M = \argmin_{M \in \ac{2,\dots,M_{\max}}} C_\varepsilon(M) \quad \text{with} \quad M_{\max} = 10.
\end{equation}

\subsection{Correlation between $Y_{\frac{h}{n_i}}$ and $Y_{\frac{h}{n_{i-1}}}$} \label{sec:correlation}
\paragraph{Diffusion approximation}
In many situations (like $e.g.$ the numerical experiments carried out below), discretization schemes of Brownian diffusions need to be simulated with various steps (say $\frac{T}{nn_i}$ and $\frac{T}{nn_{i+1}}$ in our case). This requires to simulate consistent Brownian increments over $[0,\frac Tn]$, then $[\frac{(k-1)T}{n},\frac{kT}{n}]$, $k=2,\ldots,n$. This can be performed by simulating recursively the Brownian increments over  all successive sub-intervals of interest, having in mind that the  ``quantum" size for  the simulation is given by $\frac{T}{nm}$ where $m= gcd(n_1,\ldots,n_{_R})$. This recursive refinement is also known as the Brownian Bridge simulation procedure.
One can also produce  once and for all an {\em abacus} of coefficients to compute  by induction the needed increments from smaller subintervals up to the root interval of length $\frac Tn$. This is done $e.g.$ in~\cite{PAG0} up to $R=5$  for  $\alpha=1$ and up to $R=3$ for $\alpha =\frac 12$. 

\paragraph{Nested Monte Carlo}
In a Nested Monte Carlo the relation between $Y_{\frac{h}{n_i}}$ and $Y_{\frac{h}{n_{i-1}}}$ is simply based on the following rule: the $n_{i-1}/h$ first terms of the sequence of copies of $Z$ used to simulate $Y_{\frac{h}{n_{i-1}}}$ must be used to simulate  $Y_{\frac{h}{n_i}}$.

\subsection{Methodology}
We compare the two \MLMC and \MLRR estimators for different biased problems. In the sequel, we consider the standard design matrix~\eqref{tempMLRR} for the \MLRR estimator, idem for the \MLMC estimator. After a crude evaluation of $\var(Y_0)$ and $V_1$ (using~\eqref{eq:hatV1}) we compute the ``optimal" parameter $M$ solution to~\eqref{eq:M_max}. The others parameters are specified according to Tables~\eqref{tab:opt_param_MLRR} and~\eqref{tab:opt_param_MLMC} with $\tilde c = c_1 = 1$. 

The empirical bias error $\tilde{\mu}_{_L}$ of the estimator $\bar Y_{h, \underline n}^{N,q}$ is obtained using $L = 256$ independent replications of the estimator, namely
\begin{equation*}
	\tilde{\mu}_{_L} = \frac{1}{L} \sum_{\ell=1}^{L} (\bar Y_{h, \underline n}^{N,q})^{(\ell)} - I_0,
\end{equation*}
where $I_0 = \esp{Y_0}$ is the true value. 

The empirical $\L^2$--error or empirical root mean squared error (RMSE) $\tilde \varepsilon_L$ of the estimator used in our numerical experiments is given by 
	\begin{equation} \label{eq:L2_empiric_error}
	\tilde{\varepsilon}_{_L} = \sqrt{
		\frac{1}{L} \sum_{\ell=1}^{L} \pa{(\bar Y_{h, \underline n}^{N,q})^{(\ell)} - I_0}^2}.
	\end{equation}

The computations were performed on a computer with 4 multithreaded(16) octo-core processors (Intel(R) Xeon(R) CPU E5-4620 @ 2.20GHz). The code of one estimator runs on a single thread (program in \texttt{C++11} available on request).

\subsection{Euler scheme of a geometric Brownian motion: pricing of European options}
We consider a geometric Brownian motion $(S_t)_{t\in [0,T]}$, representative in a Black-Scholes model of the dynamics of a risky asset price  between time $t=0$ and time $t=T$:
\[
S_t = s_0 e^{(r-\frac{\sigma^2}{2})t +\sigma W_t}, \; t\!\in [0,T],\; S_0=s_0>0,
\]
where $r$ denotes the    (constant)  ``riskless"  interest rate, $\sigma$ denotes the volatility and $W=(W_t)_{t\in [0,T]}$ is a standard Brownian motion defined on a probability space $(\Omega,{\cal A}, \P)$. The price or {\em premium} of a so-called {\em vanilla} option with payoff $\varphi$ is given by $e^{-rT}\esp{\varphi(S_{_T})}$ and the price of a {\em path dependent} option with functional payoff $\varphi$ is given by $e^{-rT}\esp{\varphi((S_t)_{t \in [0,T]})}$. 
Since $(S_t)_{t \in [0,T]}$ is solution to the diffusion $SDE$
\[
dS_t = S_t(r \d t + \sigma d W_t), \quad S_0 = s_0 > 0,
\]
one can compute the price of an option by a Monte Carlo simulation in which the true process $(S_t)_{t\in [0,T]}$ is replaced by its Euler scheme $(\bar S_{kh})_{0\le k\le n}$, $h = \frac{T}{n}$ (even if we are aware that $S_{_T}$ can be simulated). The bias parameter set $\Hr$ is then defined by $\Hr = \ac{T/n, \; n \ge 1}$ and $\mathbf{h} = T$.

Although nobody would adopt any kind of Monte Carlo simulation to compute option price in this model since a standard difference method on the Black-Scholes parabolic PDE is much more efficient to evaluate a vanilla option and many path-dependent ones, it turns out that the time discretization of a Black-Scholes model and its Euler scheme is a very demanding {\em benchmark} to test and evaluate the performances of Monte Carlo method(s). As a consequence, it is quite appropriate to carry out numerical tests with \MLRR and \MLMC.

\subsubsection{Vanilla Call option ($\alpha = \beta = 1$)}
The Black-Scholes parameters considered here are $s_0 = 100$, $r = 0.06$ and $\sigma = 0.4$. The payoff is a European {\em Call} with maturity $T=1$ year and strike $K= 80$. 

In such a regular diffusion setting (both drift and diffusion coefficients are ${\cal C}^{\infty}_b$ and the payoff function is Lipschitz continuous), one has $\alpha = \beta = 1$.
The parameters $\theta= \sqrt{V_1/\var(Y_0)}$ and $\var(Y_0)$ have been roughly estimated following the procedure~\eqref{eq:hatV1} on a sample of size $100\,000$ described in Section~\ref{sec:practi}, leading to the values $V_1 \simeq 56$ and $\var(Y_0) \simeq 876$ (so that $\theta \simeq 0.25$).
The empirical $\L^2$--error $\tilde \epsilon_{L}$ is estimated using $L = 256$ runs of the algorithm and the bias is computed using the true value of the price $I_0 = 29.4987$ provided by the Black-Scholes formula.

The results are summarized in Table~\ref{tab:call_MLRR} for the \MLRR estimator and in Table~\ref{tab:call_MLMC} for the \MLMC estimator. 

\begin{table}[ht!]\label{tab:bs_MLRR} 
\centering 
\begin{tabular}{c|c|c|c|c|c||c|c|c|c|c}
	$k$ & $\varepsilon=2^{-k}$ & $\L^2$--error & time $(s)$ & bias & variance 
	& $R$ & $M$ & $h^{-1}$ & $N$ & $\Cost$ \\
	\midrule
1 & 5.00\e{-01} & 3.91\e{-01} & 3.02\e{-02} & 1.47\e{-01} & 1.31\e{-01} & 2 & 5 & 1 & 1.50\e{+04} & 2.47\e{+04} \\
2 & 2.50\e{-01} & 2.18\e{-01} & 1.12\e{-01} & 8.99\e{-02} & 3.96\e{-02} & 2 & 9 & 1 & 5.91\e{+04} & 1.06\e{+05} \\
3 & 1.25\e{-01} & 9.28\e{-02} & 5.59\e{-01} & -5.61\e{-04} & 8.62\e{-03} & 3 & 4 & 1 & 3.19\e{+05} & 7.09\e{+05} \\
4 & 6.25\e{-02} & 5.01\e{-02} & 2.12\e{+00} & -1.90\e{-02} & 2.15\e{-03} & 3 & 4 & 1 & 1.27\e{+06} & 2.84\e{+06} \\
5 & 3.12\e{-02} & 2.71\e{-02} & 8.13\e{+00} & -1.15\e{-02} & 6.00\e{-04} & 3 & 5 & 1 & 4.99\e{+06} & 1.15\e{+07} \\
6 & 1.56\e{-02} & 1.35\e{-02} & 3.22\e{+01} & -4.41\e{-03} & 1.63\e{-04} & 3 & 6 & 1 & 1.99\e{+07} & 4.72\e{+07} \\
7 & 7.81\e{-03} & 6.98\e{-03} & 1.31\e{+02} & -2.32\e{-03} & 4.33\e{-05} & 3 & 7 & 1 & 7.98\e{+07} & 1.95\e{+08} \\
8 & 3.91\e{-03} & 3.57\e{-03} & 5.51\e{+02} & -9.35\e{-04} & 1.19\e{-05} & 3 & 9 & 1 & 3.25\e{+08} & 8.37\e{+08} \\
\end{tabular}
\caption{Call option ($\alpha = 1, \beta = 1$): Parameters and results of the \MLRR estimator.}
\label{tab:call_MLRR}
\end{table}

As an example, the third line of the Table~\ref{tab:call_MLRR} reads as follows: for a prescribed RMSE error $\varepsilon = 2^{-3} = 0.125$, the \MLRR estimator $\bar Y_{h, \underline n}^{N, q}$ (with design matrix~\eqref{tempMLRR}) is implemented with the parameters $R = 3$, $h = 1$ and refiners $n_i = 4^{i-1}$ (then $n_1 = 1$, $n_2 = 4$ and $n_3 = 16$) and the sample size $N \simeq 319\,000$. The allocation weights $q_i$ (not reported in this Table) are such that the numerical cost $\Cost(\bar Y_{h, \underline n}^{N, q}) \simeq 709\,800$. For such parameters, the empirical RMSE $\tilde \varepsilon_{_L} \simeq 0.0928$ and the computational time of $Y_{h, \underline n}^{N, q} \simeq 0.559$ seconds. The empirical bias error $\tilde \mu_{_L}$ is reported in the 5th column (\emph{bias}) and the empirical unitary variance $\tilde \nu_{_L}$ is reported in the 6th column (\emph{variance}). Recall that $\tilde \varepsilon_{_L} = \sqrt{(\tilde \mu_{_L})^2 + \tilde \nu_{_L}}$.

\begin{table}[ht!]\label{tab:bs_MLMC} 
\centering 
\begin{tabular}{c|c|c|c|c|c||c|c|c|c|c}
	$k$ & $\varepsilon=2^{-k}$ & $\L^2$--error & time $(s)$ & bias & variance  
	& $R$ & $M$ & $h^{-1}$ & $N$ & $\Cost$ \\
	\midrule
1 & 5.00\e{-01} & 5.02\e{-01} & 2.53\e{-02} & 3.87\e{-01} & 1.02\e{-01} & 2 & 4 & 1 & 1.57\e{+04} & 2.32\e{+04} \\
2 & 2.50\e{-01} & 2.85\e{-01} & 1.31\e{-01} & 2.25\e{-01} & 3.04\e{-02} & 2 & 7 & 1 & 6.48\e{+04} & 1.06\e{+05} \\
3 & 1.25\e{-01} & 1.20\e{-01} & 6.28\e{-01} & 8.77\e{-02} & 6.63\e{-03} & 3 & 4 & 1 & 3.64\e{+05} & 7.33\e{+05} \\
4 & 6.25\e{-02} & 6.31\e{-02} & 2.44\e{+00} & 4.45\e{-02} & 2.00\e{-03} & 3 & 6 & 1 & 1.49\e{+06} & 3.32\e{+06} \\
5 & 3.12\e{-02} & 3.42\e{-02} & 1.05\e{+01} & 2.48\e{-02} & 5.59\e{-04} & 3 & 8 & 1 & 6.15\e{+06} & 1.47\e{+07} \\
6 & 1.56\e{-02} & 1.66\e{-02} & 5.17\e{+01} & 1.23\e{-02} & 1.22\e{-04} & 4 & 5 & 1 & 3.06\e{+07} & 8.38\e{+07} \\
7 & 7.81\e{-03} & 7.83\e{-03} & 2.20\e{+02} & 5.06\e{-03} & 3.57\e{-05} & 4 & 7 & 1 & 1.27\e{+08} & 3.82\e{+08} \\
8 & 3.91\e{-03} & 4.48\e{-03} & 9.14\e{+02} & 3.26\e{-03} & 9.43\e{-06} & 4 & 8 & 1 & 5.17\e{+08} & 1.62\e{+09} \\
\end{tabular}
\caption{Call option ($\alpha = 1, \beta = 1$): Parameters and results of the \MLMC estimator.}
\label{tab:call_MLMC} 
\end{table}

Note first that, as expected, the depth parameter $R \ge 2$ and the numerical cost $\Cost(\bar Y_{h, \underline n}^{N,q})$ grow slower for \MLRR than for \MLMC as $\varepsilon$ goes to 0. Consequently, regarding the CPU--time for a prescribed  error $\varepsilon=2^{-k}$, \MLRR is about 10\% to 100\% (twice) faster than \MLMC when $k$ goes from $2$ to $8$. On the other hand, both estimators \MLRR and \MLMC provide an empirical RMSE close to the prescribed RMSE \ie $\tilde \varepsilon_{_L} \le \varepsilon$. We can conclude that the automatic tuning of the algorithm parameters is satisfactory for both estimators.

In Figure~\ref{fig:call}$(a)$ is depicted the CPU--time (4th column) as a function of the {\em empirical} $\L^2$--error (3rd column). It provides a direct comparison of the performance of both estimators. Each point is labeled by the prescribed RMSE $\varepsilon = 2^{-k}$, $k=1,\dots,8$ for easy reading. The plot is in $\log_2$--$\log$ scale. The \MLRR estimator (blue solid line) is below the \MLMC estimator (red dashed line). The ratio of CPU--times for a given $\tilde \varepsilon_{_L}$ shows that \MLRR goes from $1.28$ up to $2.8$ faster, within the range of our simulations. Figure~\ref{fig:call}$(b)$ represents the product (CPU--time)$\times \varepsilon^2$ as a function of $\varepsilon$. 
\begin{figure}[ht!] 
	\begin{minipage}[b]{.49\linewidth}
        \centering \includegraphics[width=\linewidth]{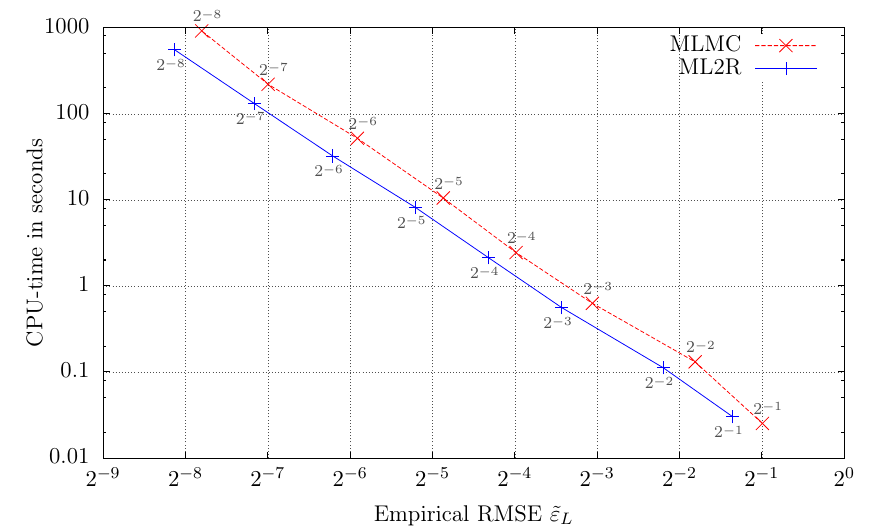}
        \subcaption{CPU--time ($y$--axis, $\log$ scale) as a function of $\tilde \varepsilon_L$ ($x$--axis, $\log_2$ scale).}
    \end{minipage} \hfill
	\begin{minipage}[b]{.49\linewidth}
		\centering \includegraphics[width=\linewidth]{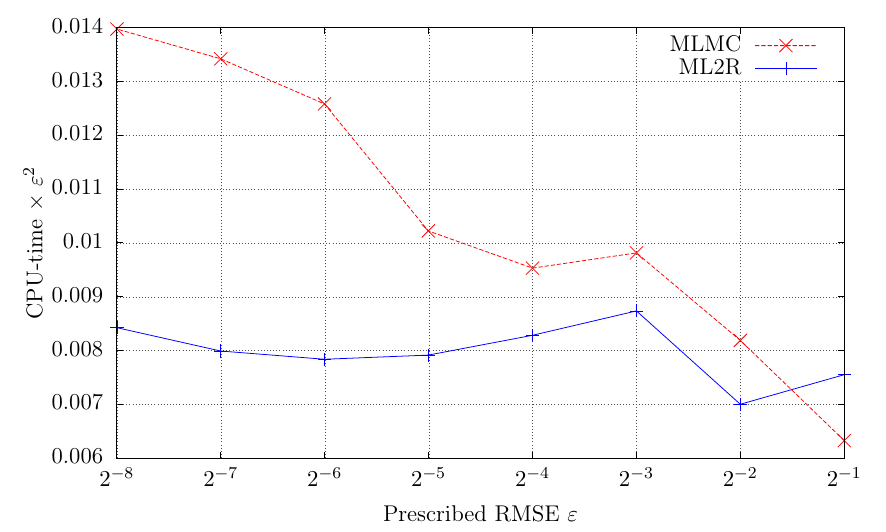}
        \subcaption{CPU--time $\times$ $\varepsilon^2$ ($y$--axis) as a function of $\varepsilon$ ($x$--axis, $\log_2$ scale).}
	\end{minipage}
    \caption{Call option in a Black-Scholes model.}
	\label{fig:call}
\end{figure}

\subsubsection{Lookback option ($\alpha = 0.5$, $\beta = 1$)}
We consider a partial Lookback Call option defined by its functional payoff 
\begin{equation*}
	\varphi(x) = e^{-rT} \pa[1]{x(T) - \lambda \min_{t \in [0,T]} x(t)}_+, \quad x \in \mathcal{C}([0,T], \R)
\end{equation*}
where $\lambda \ge 1$. 
The parameters of the Black-Scholes model are $s_0 = 100$, $r = 0.15$, $\sigma = 0.1$ and $T= 1$ and the coefficient $\lambda$ is set at $\lambda = 1.1$. For these parameters, the price given by a closed-form expression is $I_0 = 8.89343$. 

For such payoff with Lipschitz continuous functional,~\eqref{strong_error} holds with $\beta = 1$ and~\eqref{weak_error} holds with $\alpha = 0.5$. Note that the full expansion $\bar R = +\infty$ is not yet proved to our knowledge. An estimation of structural parameters yields $\var(Y_0) \simeq 41$ and $V_1 \simeq 3.58$ (and then $\theta \simeq 0.29$). Both estimators are implemented using the automatic tuning previously exposed.

The results are summarized in Table~\ref{tab:lookback_MLRR} for the~\MLRR and in Table~\ref{tab:lookback_MLMC} for the~\MLMC. Note first that as a function of the  prescribed $\varepsilon= 2^{-k}$ the ratio between CPU--times goes from $1.1$ ($k=2$) up to $3.5$ ($k=9$), as does the ratio $\Cost(\text{\MLMC}) / \Cost(\text{\MLRR})$. However, the empirical RMSE of \MLMC is greater than $\varepsilon$ (certainly because $c_1 \neq 1$) unlike that of \MLRR.
One observes that the $\L^2$--error of \MLRR has a very small bias $\tilde \mu_L$ (5th column) due to the particular choice of the weights $(\W_i)_{1 \le i \le R}$.

Figure~\ref{fig:lookback}$(a)$ provides a graphical representation of the performance of both estimators, now  as  a function of the empirical RMSE $\widetilde \varepsilon$. It shows that  \MLRR is faster  then \MLMC by a factor that goes from  $18$ up to $48$ within the range of our simulations. 

\begin{table}[ht!]
\centering 
\begin{tabular}{c|c|c|c|c|c||c|c|c|c|c}
	$k$ & $\varepsilon=2^{-k}$ & $\L^2$--error & time $(s)$ & bias & variance 
	& $R$ & $M$ & $h^{-1}$ & $N$ & $\Cost$ \\
	\midrule
1 & 5.00\e{-01} & 3.54\e{-01} & 2.42\e{-03} & -5.80\e{-02} & 1.22\e{-01} & 3 & 6 & 1 & 1.46\e{+03} & 4.40\e{+03} \\
2 & 2.50\e{-01} & 1.80\e{-01} & 1.04\e{-02} & -3.66\e{-02} & 3.10\e{-02} & 3 & 6 & 1 & 5.82\e{+03} & 1.76\e{+04} \\
3 & 1.25\e{-01} & 9.95\e{-02} & 4.17\e{-02} & -3.98\e{-02} & 8.31\e{-03} & 3 & 7 & 1 & 2.30\e{+04} & 7.07\e{+04} \\
4 & 6.25\e{-02} & 5.45\e{-02} & 1.53\e{-01} & -9.53\e{-03} & 2.88\e{-03} & 3 & 10 & 2 & 6.48\e{+04} & 3.55\e{+05} \\
5 & 3.12\e{-02} & 2.31\e{-02} & 8.69\e{-01} & -1.50\e{-03} & 5.33\e{-04} & 4 & 5 & 1 & 4.50\e{+05} & 1.68\e{+06} \\
6 & 1.56\e{-02} & 1.22\e{-02} & 3.43\e{+00} & -8.49\e{-04} & 1.47\e{-04} & 4 & 6 & 1 & 1.77\e{+06} & 6.74\e{+06} \\
7 & 7.81\e{-03} & 6.31\e{-03} & 1.39\e{+01} & -2.76\e{-04} & 3.98\e{-05} & 4 & 7 & 1 & 7.03\e{+06} & 2.74\e{+07} \\
8 & 3.91\e{-03} & 3.34\e{-03} & 5.74\e{+01} & 1.19\e{-04} & 1.11\e{-05} & 4 & 9 & 1 & 2.83\e{+07} & 1.16\e{+08} \\
9 & 1.95\e{-03} & 1.80\e{-03} & 2.10\e{+02} & 1.08\e{-04} & 3.23\e{-06} & 4 & 10 & 2 & 7.88\e{+07} & 5.45\e{+08} \\
\end{tabular}
\caption{Lookback option ($\alpha = 0.5, \beta = 1$): Parameters and results of the \MLRR estimator.}
\label{tab:lookback_MLRR}
\end{table}

\begin{table}[ht!]
\centering 
\begin{tabular}{c|c|c|c|c|c||c|c|c|c|c}
	$k$ & $\varepsilon=2^{-k}$ & $\L^2$--error & time $(s)$ & bias & variance 
	& $R$ & $M$ & $h^{-1}$ & $N$ & $\Cost$ \\
	\midrule
1 & 5.00\e{-01} & 1.35\e{+00} & 1.47\e{-03} & -1.32\e{+00} & 6.60\e{-02} & 2 & 8 & 1 & 1.17\e{+03} & 2.05\e{+03} \\
2 & 2.50\e{-01} & 6.86\e{-01} & 1.13\e{-02} & -6.72\e{-01} & 1.87\e{-02} & 3 & 6 & 1 & 6.80\e{+03} & 1.61\e{+04} \\
3 & 1.25\e{-01} & 3.00\e{-01} & 6.27\e{-02} & -2.91\e{-01} & 5.37\e{-03} & 4 & 6 & 1 & 3.59\e{+04} & 1.11\e{+05} \\
4 & 6.25\e{-02} & 1.96\e{-01} & 2.73\e{-01} & -1.92\e{-01} & 1.57\e{-03} & 4 & 8 & 1 & 1.49\e{+05} & 5.04\e{+05} \\
5 & 3.12\e{-02} & 9.25\e{-02} & 1.46\e{+00} & -9.03\e{-02} & 4.04\e{-04} & 5 & 7 & 1 & 7.26\e{+05} & 2.93\e{+06} \\
6 & 1.56\e{-02} & 4.38\e{-02} & 6.80\e{+00} & -4.25\e{-02} & 1.20\e{-04} & 5 & 10 & 1 & 3.10\e{+06} & 1.40\e{+07} \\
7 & 7.81\e{-03} & 2.47\e{-02} & 3.26\e{+01} & -2.42\e{-02} & 2.87\e{-05} & 6 & 8 & 1 & 1.42\e{+07} & 7.17\e{+07} \\
8 & 3.91\e{-03} & 9.06\e{-03} & 1.72\e{+02} & -8.64\e{-03} & 7.49\e{-06} & 7 & 8 & 1 & 6.62\e{+07} & 3.89\e{+08} \\
9 & 1.95\e{-03} & 6.16\e{-03} & 7.34\e{+02} & -6.00\e{-03} & 1.97\e{-06} & 7 & 9 & 1 & 2.71\e{+08} & 1.66\e{+09} \\
\end{tabular}
\caption{Lookback option ($\alpha = 0.5, \beta = 1$): Parameters and results of the \MLMC estimator.}
\label{tab:lookback_MLMC}
\end{table}

\begin{figure}[ht!] 
	\begin{minipage}[b]{.49\linewidth}
        \centering \includegraphics[width=\linewidth]{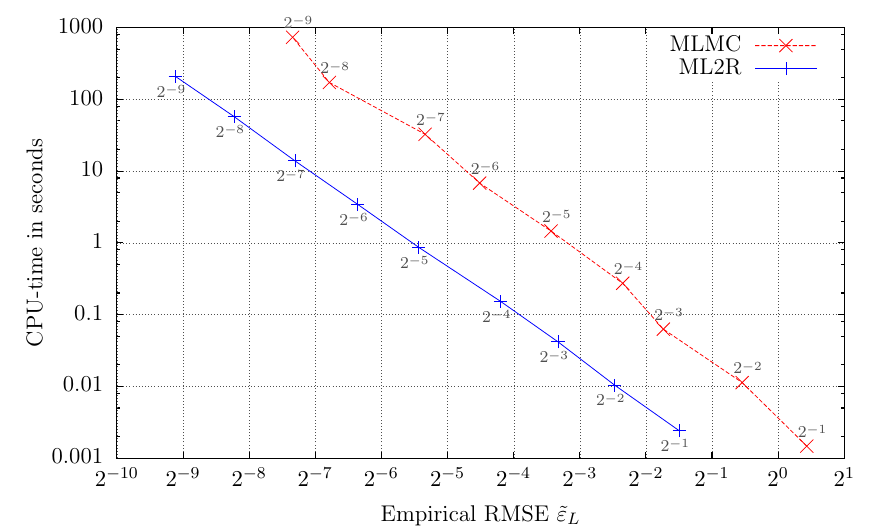}
        \subcaption{CPU--time ($y$--axis, $\log$ scale) as a function of $\tilde \varepsilon_L$ ($x$--axis, $\log_2$ scale).}
    \end{minipage} \hfill
	\begin{minipage}[b]{.49\linewidth}
		\centering \includegraphics[width=\linewidth]{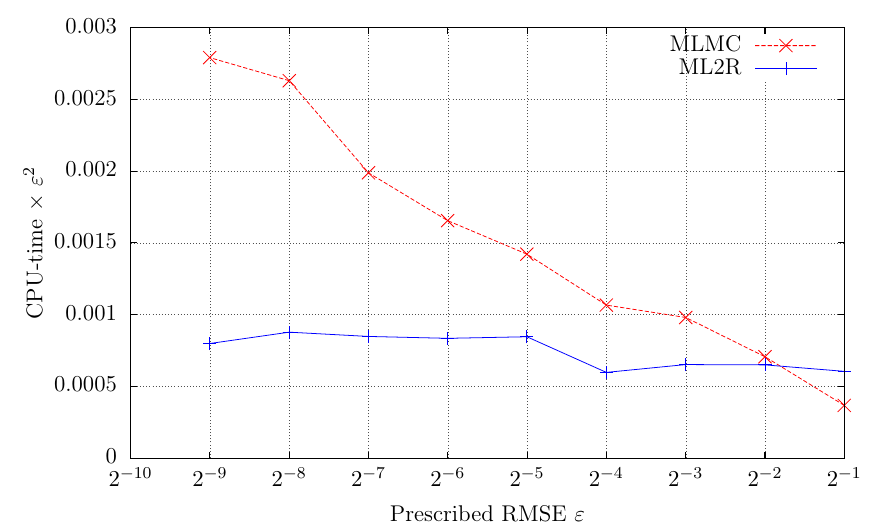}
        \subcaption{CPU--time $\times$ $\varepsilon^2$ ($y$--axis) as a function of $\varepsilon$ ($x$--axis, $\log_2$ scale).}
	\end{minipage}
    \caption{Lookback option in a Black-Scholes model.}
    \label{fig:lookback}
\end{figure}

\subsubsection{Barrier option ($\alpha = 0.5$, $\beta = 0.5$)}
We consider now an up-and-out call option to illustrate the case $\beta = 0.5 < 1$ and $\alpha = 0.5$. This path-dependent option with strike $K$ and barrier $B > K$ is defined by its functional payoff
\begin{equation*}
	\varphi(x) = e^{-r T} (x(T) - K)_+ \ind{\max_{t \in [0,T]} x(t) \le B}, \quad x \in \mathcal{C}([0,T], \R).
\end{equation*}
The parameters of the Black-Scholes model are $s_0 = 100$, $r = 0$, $\sigma = 0.15$ and $T = 1$. With $K = 100$ and $B = 120$, the price computed by closed-form solution is $I_0 = 1.855225$.

We consider here a simple (and highly biased) approximation of $\ds \max_{t \in [0, T]} S_t$ by $\ds \max_{k \in \{1,\dots,n\}} \bar S_{kh}$. This allows us to compare both estimators in the case $\beta = 0.5$. Like in the Lookback option, we assume that~\eqref{weak_error} holds with $\alpha = 0.5$ and $\bar R = +\infty$. 
A first computational stage gives us $\var(Y_0) \simeq 303$, $V_1 \simeq 5.30$ and $\theta \simeq 0.41$.

The results are summarized in Table~\ref{tab:barrier_MLRR} for \MLRR and in Table~\ref{tab:barrier_MLMC} for \MLMC. 
\begin{table}[ht!]
\centering
\begin{tabular}{c|c|c|c|c|c||c|c|c|c|c}
	$k$ & $\varepsilon=2^{-k}$ & $\L^2$--error & time $(s)$ & bias & variance 
	& $R$ & $M$ & $h^{-1}$ & $N$ & $\Cost$ \\
	\midrule
1 & 5.00\e{-01} & 3.85\e{-01} & 6.07\e{-03} & -3.92\e{-02} & 1.46\e{-01} & 3 & 4 & 1 & 2.65\e{+03} & 1.17\e{+04} \\
2 & 2.50\e{-01} & 1.94\e{-01} & 2.29\e{-02} & -3.82\e{-02} & 3.62\e{-02} & 3 & 4 & 1 & 1.06\e{+04} & 4.66\e{+04} \\
3 & 1.25\e{-01} & 1.14\e{-01} & 9.65\e{-02} & -2.00\e{-02} & 1.26\e{-02} & 3 & 7 & 1 & 4.02\e{+04} & 2.07\e{+05} \\
4 & 6.25\e{-02} & 6.28\e{-02} & 5.05\e{-01} & -5.45\e{-03} & 3.92\e{-03} & 3 & 10 & 2 & 1.34\e{+05} & 1.44\e{+06} \\
5 & 3.12\e{-02} & 2.83\e{-02} & 3.05\e{+00} & 1.24\e{-03} & 8.01\e{-04} & 4 & 5 & 1 & 1.01\e{+06} & 7.94\e{+06} \\
6 & 1.56\e{-02} & 1.49\e{-02} & 1.31\e{+01} & 6.98\e{-04} & 2.22\e{-04} & 4 & 6 & 1 & 4.15\e{+06} & 3.54\e{+07} \\
7 & 7.81\e{-03} & 7.81\e{-03} & 5.79\e{+01} & 7.82\e{-04} & 6.03\e{-05} & 4 & 7 & 1 & 1.71\e{+07} & 1.58\e{+08} \\
8 & 3.91\e{-03} & 4.13\e{-03} & 2.77\e{+02} & -2.01\e{-05} & 1.71\e{-05} & 4 & 9 & 1 & 7.39\e{+07} & 7.81\e{+08} \\
\end{tabular}
\caption{Barrier option ($\alpha = 0.5, \beta = 0.5$): Parameters and results of the \MLRR estimator.}
\label{tab:barrier_MLRR}
\end{table}
\begin{table}[ht!]
\centering 
\begin{tabular}{c|c|c|c|c|c||c|c|c|c|c}
	$k$ & $\varepsilon=2^{-k}$ & $\L^2$--error & time $(s)$ & bias & variance 
	& $R$ & $M$ & $h^{-1}$ & $N$ & $\Cost$ \\
	\midrule
1 & 5.00\e{-01} & 7.83\e{-01} & 2.26\e{-03} & 7.25\e{-01} & 8.73\e{-02} & 2 & 8 & 1 & 1.36\e{+03} & 2.83\e{+03} \\
2 & 2.50\e{-01} & 4.03\e{-01} & 2.05\e{-02} & 3.67\e{-01} & 2.75\e{-02} & 3 & 6 & 1 & 1.03\e{+04} & 3.57\e{+04} \\
3 & 1.25\e{-01} & 1.81\e{-01} & 1.83\e{-01} & 1.56\e{-01} & 8.30\e{-03} & 4 & 6 & 1 & 7.18\e{+04} & 4.28\e{+05} \\
4 & 6.25\e{-02} & 1.09\e{-01} & 9.52\e{-01} & 9.71\e{-02} & 2.47\e{-03} & 4 & 8 & 1 & 3.27\e{+05} & 2.40\e{+06} \\
5 & 3.12\e{-02} & 5.33\e{-02} & 8.38\e{+00} & 4.70\e{-02} & 6.27\e{-04} & 5 & 7 & 1 & 2.11\e{+06} & 2.40\e{+07} \\
6 & 1.56\e{-02} & 2.61\e{-02} & 6.16\e{+01} & 2.22\e{-02} & 1.88\e{-04} & 5 & 10 & 1 & 1.09\e{+07} & 1.74\e{+08} \\
7 & 7.81\e{-03} & 1.41\e{-02} & 4.90\e{+02} & 1.23\e{-02} & 4.51\e{-05} & 6 & 8 & 1 & 6.40\e{+07} & 1.43\e{+09} \\
8 & 3.91\e{-03} & 5.58\e{-03} & 6.05\e{+03} & 4.43\e{-03} & 1.15\e{-05} & 7 & 8 & 1 & 4.37\e{+08} & 1.67\e{+10} \\
\end{tabular}
\caption{Barrier option ($\alpha = 0.5, \beta = 0.5$): Parameters and results of the \MLMC estimator.}
\label{tab:barrier_MLMC}
\end{table}

See Figure~\ref{fig:barrier} for a graphical representation. Note that since $\beta = 0.5$, we observe that the function (CPU--time)$\times \varepsilon^2$ increases much faster for \MLMC than \MLRR as $\varepsilon$ goes to 0 which agrees with the theoretical asymptotic rates from Theorem~\ref{prop:asympReps}. In fact, in this highly biased example with slow strong convergence rate, the ratio $\Cost(\text{\MLMC}) / \Cost(\text{\MLRR})$ as a function of the  prescribed $\varepsilon= 2^{-k}$ goes from $1.1$ ($k=2$) up to $22$ ($k=8$), likewise the ratio between CPU--times behaves. When looking at this ratio as a function of the empirical RMSE, it even goes from $3$ up to $61$ which is huge having in mind that \MLMC provides similar gains with respect to a crude Monte Carlo simulation. 
\begin{figure}[ht!] 
	\begin{minipage}[b]{.49\linewidth}
        \centering \includegraphics[width=\linewidth]{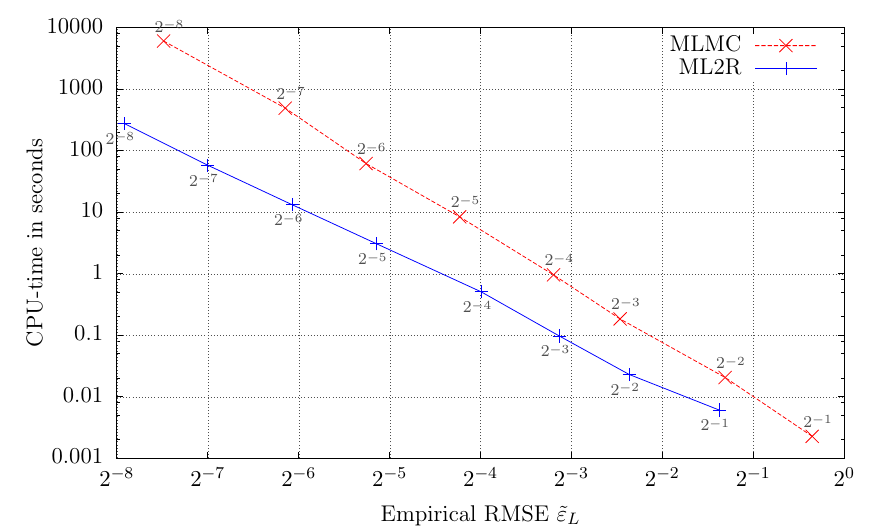}
        \subcaption{CPU--time ($y$--axis, $\log$ scale) as a function of $\tilde \varepsilon_L$ ($x$--axis, $\log_2$ scale).}
    \end{minipage} \hfill
	\begin{minipage}[b]{.49\linewidth}
		\centering \includegraphics[width=\linewidth]{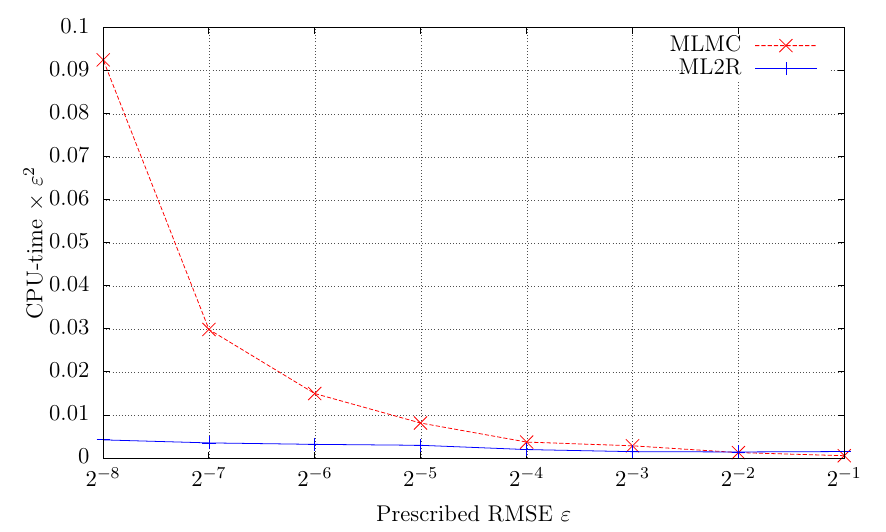}
        \subcaption{CPU--time $\times$ $\varepsilon^2$ ($y$--axis) as a function of $\varepsilon$ ($x$--axis, $\log_2$ scale).}
	\end{minipage}
    \caption{Barrier option in a Black-Scholes model.}
    \label{fig:barrier}
\end{figure}

\subsection{Nested Monte Carlo: compound option pricing ($\alpha = \beta = 1$)}
A compound option is simply an option on an option. The payoff of a compound option involves the value of another option. A compound option has then two expiration dates $T_1 < T_2$ and two strike prices $K_1$ and $K_2$. We consider here the example of a European style {\em Put} on a {\em Call} where the underlying risky asset $S$ is still given by a Black-Scholes process with parameters $(r,\sigma)$. At the first expiration date $T_1$, the holder has the right to sell a new {\em Call}  option at the strike price $K_1$. The new {\em Call}  has expiration date $T_2$ and strike price $K_2$. The payoff of such a {\em Put}-on-{\em Call}  option writes 
\begin{equation*}
	\pa{K_1 - \espc{(S_{T_2} - K_2)_+}{S_{T_1}}}_+
\end{equation*}

To comply with the multilevel framework, we set $\Hr = \{1/K, \, K\ge 1\}$,
\begin{equation*}
	Y_0 = f\pa[1]{\espc{(S_{T_2} - K_2)_+}{S_{T_1}}}, \quad  Y_{\frac 1 K} =f\pa{\frac 1K \sum_{k=1}^K (F(Z^k, S_{T_1}) - K_2)_+}
\end{equation*} where $(Z^k)_{k \ge 1}$ is an \iid sequence of standard Gaussian ${\cal N}(0;1)$, $f(x) = (K_1 - x)_+$ and $F$ is such that 
\begin{equation*}
	S_{T_2} = F(G, S_{T_1}) = S_{T_1} e^{(r-\frac{\sigma^2}{2}) (T_2 - T_1) + \sigma \sqrt{T_2-T_1} Z}.
\end{equation*}

Note that, in this experiments, the underlying process $(S_t)_{t \in [0,T_2]}$ is not discretized in time. The bias error is exclusively due to the inner Monte Carlo estimator of the conditional expectation.

The parameters used for the underlying process $(S_t)_{t \in [0,T_2]}$ are $S_0 = 100$, $r = 0.03$ and $\sigma = 0.3$. The parameters of the  {\em Put}-on-{\em Call}   payoff are $T_1 = 1/12$, $T_2 = 1/2$ and $K_1 = 6.5$, $K_2 = 100$. Section~\ref{sec:nested} strongly suggests that~\eqref{strong_error} and~\eqref{weak_error} are satisfied with $\beta = \alpha = 1$. A crude computation of other structural parameters yields $\var(Y_0) \simeq 9.09$, $V_1 \simeq 7.20$ and $\theta \simeq 0.89$.

The results are summarized in Table~\ref{tab:nested_MLRR} for \MLRR and in Table~\ref{tab:nested_MLMC} for \MLMC. 
\begin{table}[ht!]
\centering 
\begin{tabular}{c|c|c|c|c|c||c|c|c|c|c}
	$k$ & $\varepsilon=2^{-k}$ & $\L^2$--error & time $(s)$ & bias & variance 
	& $R$ & $M$ & $h^{-1}$ & $N$ & $\Cost$ \\
	\midrule
1 & 5.00\e{-01} & 4.36\e{-01} & 8.82\e{-04} & 3.17\e{-01} & 8.95\e{-02} & 2 & 5 & 1 & 6.53\e{+02} & 1.37\e{+03} \\
2 & 2.50\e{-01} & 2.70\e{-01} & 4.91\e{-03} & 2.14\e{-01} & 2.70\e{-02} & 2 & 9 & 1 & 2.51\e{+03} & 6.33\e{+03} \\
3 & 1.25\e{-01} & 1.18\e{-01} & 2.67\e{-02} & 8.42\e{-02} & 6.89\e{-03} & 3 & 3 & 1 & 1.75\e{+04} & 4.65\e{+04} \\
4 & 6.25\e{-02} & 5.94\e{-02} & 1.05\e{-01} & 3.79\e{-02} & 2.09\e{-03} & 3 & 4 & 1 & 6.27\e{+04} & 1.87\e{+05} \\
5 & 3.12\e{-02} & 3.36\e{-02} & 4.02\e{-01} & 2.31\e{-02} & 5.97\e{-04} & 3 & 5 & 1 & 2.41\e{+05} & 7.84\e{+05} \\
6 & 1.56\e{-02} & 1.89\e{-02} & 1.17\e{+00} & 1.38\e{-02} & 1.65\e{-04} & 3 & 6 & 1 & 9.52\e{+05} & 3.32\e{+06} \\
7 & 7.81\e{-03} & 1.20\e{-02} & 5.13\e{+00} & 1.00\e{-02} & 4.45\e{-05} & 3 & 7 & 1 & 3.80\e{+06} & 1.41\e{+07} \\
8 & 3.91\e{-03} & 6.37\e{-03} & 2.26\e{+01} & 5.30\e{-03} & 1.25\e{-05} & 3 & 9 & 1 & 1.54\e{+07} & 6.28\e{+07} \\
9 & 1.95\e{-03} & 2.48\e{-03} & 1.06\e{+02} & 1.89\e{-03} & 2.62\e{-06} & 4 & 4 & 1 & 8.22\e{+07} & 3.26\e{+08} \\
\end{tabular}
\caption{Nested compound option ($\alpha = 1$, $\beta = 1$): Parameters and results of the \MLRR estimator.}
\label{tab:nested_MLRR}
\end{table}
\begin{table}[ht!]
\centering 
\begin{tabular}{c|c|c|c|c|c||c|c|c|c|c}
	$k$ & $\varepsilon=2^{-k}$ & $\L^2$--error & time $(s)$ & bias & variance 
	& $R$ & $M$ & $h^{-1}$ & $N$ & $\Cost$ \\
	\midrule
1 & 5.00\e{-01} & 8.97\e{-01} & 5.54\e{-04} & 8.59\e{-01} & 6.62\e{-02} & 2 & 4 & 1 & 6.38\e{+02} & 1.14\e{+03} \\
2 & 2.50\e{-01} & 5.74\e{-01} & 4.25\e{-03} & 5.56\e{-01} & 2.05\e{-02} & 2 & 7 & 1 & 2.64\e{+03} & 5.76\e{+03} \\
3 & 1.25\e{-01} & 2.69\e{-01} & 2.37\e{-02} & 2.58\e{-01} & 6.08\e{-03} & 3 & 4 & 1 & 1.72\e{+04} & 4.57\e{+04} \\
4 & 6.25\e{-02} & 1.32\e{-01} & 1.13\e{-01} & 1.24\e{-01} & 1.95\e{-03} & 3 & 6 & 1 & 6.98\e{+04} & 2.26\e{+05} \\
5 & 3.12\e{-02} & 7.21\e{-02} & 4.99\e{-01} & 6.81\e{-02} & 5.69\e{-04} & 3 & 8 & 1 & 2.88\e{+05} & 1.06\e{+06} \\
6 & 1.56\e{-02} & 3.78\e{-02} & 1.57\e{+00} & 3.59\e{-02} & 1.40\e{-04} & 4 & 5 & 1 & 1.53\e{+06} & 6.21\e{+06} \\
7 & 7.81\e{-03} & 1.43\e{-02} & 8.70\e{+00} & 1.27\e{-02} & 4.28\e{-05} & 4 & 7 & 1 & 6.32\e{+06} & 3.02\e{+07} \\
8 & 3.91\e{-03} & 9.78\e{-03} & 3.63\e{+01} & 9.17\e{-03} & 1.15\e{-05} & 4 & 8 & 1 & 2.58\e{+07} & 1.31\e{+08} \\
9 & 1.95\e{-03} & 4.95\e{-03} & 1.68\e{+02} & 4.61\e{-03} & 3.21\e{-06} & 4 & 10 & 1 & 1.07\e{+08} & 6.06\e{+08} \\
\end{tabular}
\caption{Nested compound option ($\alpha = 1$, $\beta = 1$): Parameters and results of the \MLMC estimator.}
\label{tab:nested_MLMC}
\end{table}

Note on Figure~\ref{fig:nested} that \MLRR is faster than \MLMC as a function of the empirical RMSE by a factor approximately equal to $5$ within the range of our simulations.
\begin{figure}[ht!] 
	\begin{minipage}[b]{.49\linewidth}
        \centering \includegraphics[width=\linewidth]{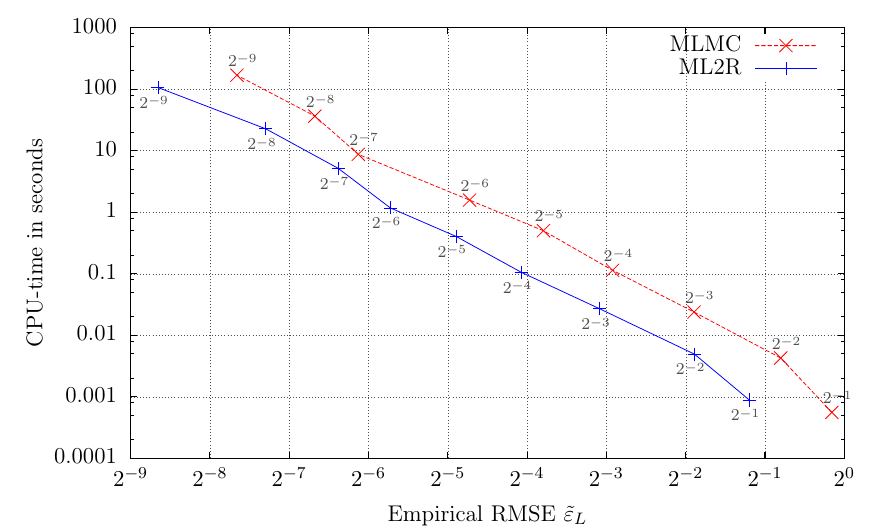}
        \subcaption{CPU--time ($y$--axis, $\log$ scale) as a function of $\tilde \varepsilon_L$ ($x$--axis, $\log_2$ scale).}
    \end{minipage} \hfill
	\begin{minipage}[b]{.49\linewidth}
		\centering \includegraphics[width=\linewidth]{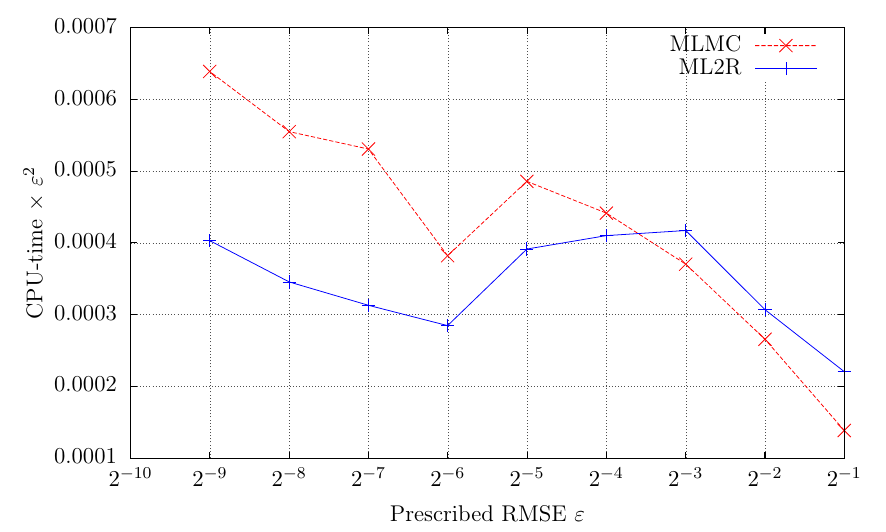}
        \subcaption{CPU--time $\times$ $\varepsilon^2$ ($y$--axis) as a function of $\varepsilon$ ($x$--axis, $\log_2$ scale).}
	\end{minipage}
    \caption{Nested compound option in a Black-Scholes model.}
    \label{fig:nested}
\end{figure}

\appendix
\section{Appendix}

\begin{Lemma}\label{lem:VdM} $(a)$
	The solution of the system $V \!\w = e_1$ where $V$ is a Vandermonde matrix 
	\begin{equation*}
		V = V(1,n_2^{-\alpha},\dots,n_{_R}^{-\alpha}) = \pa{\begin{array}{cccc}
		1 & 1 & \cdots & 1 \\
				1 & n_2^{-\alpha} & \cdots & n_{_R}^{-\alpha} \\
		\vdots & \vdots & \cdots & \vdots \\
		1 & n_2^{-\alpha (R-1)} & \cdots & n_{_R}^{-\alpha (R-1)} \\
		\end{array}},
	\end{equation*}
is given by $\ds \w_i = \frac{(-1)^{R-i} n_i^{\alpha(R-1)}}{\ds \prod_{1 \le j < i}(n^{\alpha} _i-n^{\alpha} _j)\prod_{i < j \le R}(n^{\alpha} _j-n^{\alpha} _i)}$.

\smallskip 
\noindent $(b)$ Furthermore 
\[
\widetilde \w_{_{R+1}}= \sum_{i=1}^R\frac{\w_i}{n_i^{\alpha R}}=\frac{(-1)^{R-1}}{\prod_{1\le i\le R} n^{\alpha}_i}.
\] 
\end{Lemma}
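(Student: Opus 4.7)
For part (a), my plan is to apply Cramer's rule after simplifying notation. Setting $x_i = n_i^{-\alpha}$ (so $x_1 = 1$), the matrix $V$ becomes the classical Vandermonde matrix with $V_{kj} = x_j^{k-1}$, whose determinant is $\prod_{1 \le l < m \le R} (x_m - x_l)$. By Cramer's rule, $\w_i = \det(V_i)/\det(V)$ where $V_i$ has its $i$-th column replaced by $e_1$. Expanding $\det(V_i)$ along that column leaves the $(1,i)$-cofactor, which is $(-1)^{1+i}$ times a sub-determinant that I can factor: pulling $x_j$ out of column $j$ (for $j \ne i$) turns it into $\prod_{j \ne i} x_j$ times a Vandermonde determinant in the variables $\{x_j : j \ne i\}$. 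Dividing by the full Vandermonde determinant collapses most of the product, leaving only the factors involving index $i$. Finally, I translate back using $x_i - x_l = -(n_i^{\alpha} - n_l^{\alpha})/(n_i^{\alpha} n_l^{\alpha})$, carefully tracking the signs: the $(i-1)$ minus signs from $l < i$ combine with the $(R-i)$ from $m > i$ and with the $(-1)^{1+i}$ cofactor sign to yield $(-1)^{R-i}$, while the $n_l^{\alpha}, n_m^{\alpha}$ denominators cancel against $\prod_{j \ne i} x_j = \prod_{j \ne i} n_j^{-\alpha}$, producing the announced factor $n_i^{\alpha(R-1)}$.

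For part (b), the cleanest route is to exploit the interpolation viewpoint rather than to sum the closed-form expressions. The system $V\w = e_1$ says precisely that the linear functional $Q \mapsto \sum_{i=1}^R \w_i Q(x_i)$ agrees with evaluation at $0$ on every monomial $x^k$, $k = 0,\dots,R-1$, hence on every polynomial of degree at most $R-1$. Now consider the monomial $x^R$, which is exactly one degree too high. Writing $x^R = P(x) + Q(x)$ with $P(x) = \prod_{j=1}^R(x - x_j)$ and $\deg Q \le R - 1$ (just Euclidean division; $P$ is monic of degree $R$), and evaluating at each $x_i$ kills $P(x_i) = 0$, so
\[
\sum_{i=1}^R \w_i x_i^R = \sum_{i=1}^R \w_i Q(x_i) = Q(0) = -P(0) = -(-1)^R \prod_{j=1}^R x_j = (-1)^{R-1} \prod_{j=1}^R n_j^{-\alpha}.
\]
Since $x_i^R = n_i^{-\alpha R}$, this is exactly $\widetilde\w_{R+1}$, and $\prod_j n_j^{-\alpha} = \underline n!^{-\alpha}$ gives the announced identity.

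I do not expect either part to be a genuine obstacle: part (a) is routine Vandermonde/Cramer bookkeeping, and the only thing that needs care is the sign count. Part (b) would be painful if attacked by brute summation, but the polynomial-division trick above avoids all of that and is the step I would highlight as the main idea.
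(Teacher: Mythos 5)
Your proof is correct. Part (a) is essentially the paper's argument: both proceed by Cramer's rule and the Vandermonde determinant formula; the paper recognizes the modified matrix $V_i$ as the Vandermonde matrix $V(1,\dots,a_{i-1},0,a_{i+1},\dots,a_R)$ with node $0$ in place of $a_i$, whereas you expand along the replaced column and factor $x_j$ out of the remaining columns -- the two computations produce the identical intermediate expression $(-1)^{i-1}\prod_{j\ne i}a_j \cdot \prod_{j<k,\,j,k\ne i}(a_k-a_j)$, and your sign bookkeeping checks out. Part (b), however, is genuinely different. The paper first rewrites $\w_i$ in the symmetric form $n_i^{\alpha(R-1)}/\prod_{j\neq i}(n_i^{\alpha}-n_j^{\alpha})$ using the closed form from (a), and then obtains the sum by setting $x=0$ in the partial fraction decomposition of $1/\prod_{i}(x-n_i^{\alpha})$. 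You instead read $V\w=e_1$ as saying that $Q\mapsto\sum_i\w_iQ(x_i)$ reproduces evaluation at $0$ on polynomials of degree at most $R-1$, and compute the defect on $x^R$ by Euclidean division by $\prod_j(x-x_j)$. Your route has the advantage of not using the closed form of the $\w_i$ at all, so (b) becomes independent of (a) and rests only on the defining linear system; the paper's route is a one-line identity once (a) is in hand. Both are complete and equally short.
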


\begin{proof} $(a)$ Let $a_i = n_i^{-\alpha}$.	Note that by Cramer's rule the solution of this linear system is given by $\w_i = \frac{\det(V_i)}{\det(V)}$ where $V_i$ is the matrix formed by replacing the $i$--th column of $V$ by the column vector $e_1$.   The first point is that $V_i$ is again a Vandermonde matrix of type $V_i = V(1,\dots,a_{i-1},0,a_{i+1},\dots,a_R)$. On the other hand, the determinant of a square Vandermonde matrix can be expressed as $\det(V) = \prod_{1 \le j < k \le n} \pa{a_k - a_j}$. We have for every $i \in \ac{1,\dots,R}$  
\begin{equation*}
	\w_i = \frac{\ds \prod_{1 \le j < k \le R; j,k \neq i} (a_k - a_j) \prod_{1 \le j < i} (-a_j) \prod_{i<k\le R} a_k} {\ds \prod_{1 \le j < k \le R} \pa{a_k - a_j}}
	= \frac{\ds \prod_{1 \le j < i} (-a_j) \prod_{i<k\le R} a_k} 
	{\ds \prod_{1 \le j < i} (a_i-a_j) \prod_{i<k\le R} (a_k-a_i)} 
\end{equation*}
Using that $a_i = n_i^{-\alpha}$, $i=1,\ldots,R$, we get 
\begin{equation*}
\frac{\prod_{1 \le j < i} (-a_j)}{\prod_{1 \le j < i}(a_i - a_j)} = \frac{n_i^{\alpha(i-1)}}{\prod_{1 \le j < i} (n^{\alpha} _i - n^{\alpha} _j)}
\end{equation*}
and 
\begin{equation*}
\frac{\prod_{i <k \le R} a_k}{\prod_{i < k \le R}(a_k - a_i)} 
= \frac{(-1)^{R-i} n_i^{\alpha(R-i)}}{\prod_{i < k \le R} (n^{\alpha} _k - n^{\alpha} _i)}
\end{equation*}
which completes the proof.

\smallskip 
\noindent $(b)$  follows by setting $x=0$  in the decomposition
\[
\frac{1}{\prod_{1\le i\le R} (x-n^{\alpha} _i) } = \sum_{i=1}^R \frac{1}{(x-n^{\alpha} _i)\prod_{j\neq i} (n^{\alpha} _i-n^{\alpha} _j) }.
\]
\end{proof}

\begin{Proposition} \label{asympalpha:2} When $n_i= M^{i-1}$, $i=1,\ldots,R$, the following holds true for the coefficients 
$\w_i= \w_i(R,M)
$.

\begin{enumerate}
\item Closed form for $\w_i$, $i=1,\ldots,R$: 
\[
\w_i =\w_i(R,M)= (-1)^{R-i} \frac{M^{-\frac \alpha 2 (R-i)(R-i+1)}}{\prod_{1\le j\le i-1} (1-M^{-j \alpha})\prod_{1\le j\le R-i}(1-M^{-j \alpha})},\; i=1,\ldots,R.
\]
\item Closed form for $\widetilde \w_{_{R+1}}$: 
\[
	\widetilde \w_{_{R+1}} = (-1)^{R}M^{-\frac{R(R-1)}{2}\alpha}.
\]
\item A useful upper bound:
\[
\sup_{R \in \N^*}\sum_{i=1}^{R-1} |\w_i(R,M)
|\le \frac{M^{-\alpha}}{\pi_{_{\alpha,M}}^2}\sum_{k\ge 0}M^{-\alpha\frac{k(k+3)}{2}}\quad\mbox{ and }\quad 1\le \w_{_R}(R,M)
 \le \frac{1}{\pi_{\alpha,M}}
\]
where $\pi_{_{\alpha, M}}= \prod_{k\ge 1}(1-M^{-\alpha k})$.
\item Asymptotics of the coefficients $\w_i$  when $M\to +\infty$:
\[
 \lim_{M\to +\infty}\sup_{R\in \N^*} \max_{1\le i\le R-1}|\w_i(R,M)
|=0\; \mbox{ and }\; \lim_{M\to +\infty} \sup_{R\in \N^*} |\w_R(R,M)
-1|=0.
\]

\item Asymptotics of the coefficients $\W_i= \W_i(R,M)$  when $M\to +\infty$:
the  coefficients $\W_i$ are defined in~\eqref{tempMLRR}. It follows from what precedes that they  satisfy $\W_1=1$, 
\begin{equation}\label{eq:W(M)}
  \max_{1\le i\le R}|\W_i(R,M)|\le \W_{\alpha}(M) :=  \frac{M^{-\alpha}}{\pi_{_{\alpha,M}}^2}\sum_{k\ge 0}M^{-\alpha\frac{k(k+3)}{2}}+ \frac{1}{\pi_{\alpha,M}}
\end{equation}
and
\[
  \max_{1\le i\le R}|\W_i(R,M)-1|\le \W_{\alpha}(M)-1 \sim M^{-\alpha}\to 0 \; \mbox{ as } M\to +\infty.
\]
In particular, the matrix $\mT=\mT(R,M)$ in~\eqref{tempMLRR} converges toward the matrix of the standard Multilevel Monte Carlo~\eqref{tempMLMC} at level $M$ when $M\to+\infty$.
\item One more useful inequality
\[
\forall\, R\!\in \N,\quad \frac{1}{  |\widetilde \w_{_{R+1}}|}\sum_{r=1}^R\frac{|\w_r(R,M)|}{n_r^{\alpha R}  } \le B_{\alpha}(M)\frac{1}{\pi_{\alpha, M}^2} \sum_{k\ge 0}M^{-\frac{\alpha}{2}k(k+1)}.
\]
\end{enumerate}
\end{Proposition}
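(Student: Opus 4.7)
The plan is to derive all six claims from the two identities of Lemma \ref{lem:VdM} by substituting the geometric refiners $n_i = M^{i-1}$, and then to bound the resulting finite products by the Euler-type infinite product $\pi_{_{\alpha,M}} = \prod_{k\ge 1}(1-M^{-k\alpha})$.

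For claim (1), I start from the explicit formula of Lemma \ref{lem:VdM}$(a)$, substitute $n_i^{\alpha} = M^{\alpha(i-1)}$, and factorise each difference as $n_i^{\alpha} - n_j^{\alpha} = M^{\alpha(i-1)}(1 - M^{-\alpha(i-j)})$ for $j < i$ and symmetrically $n_j^{\alpha} - n_i^{\alpha} = M^{\alpha(j-1)}(1 - M^{-\alpha(j-i)})$ for $j > i$. Collecting all $M$-prefactors, the net exponent of $M$ in $\w_i$ is $\alpha[(i-1)(R-1) - (i-1)^2 - \sum_{j=i+1}^R(j-1)]$; a short calculation (easy to verify on small cases like $R=3$) shows this equals $-\frac{\alpha}{2}(R-i)(R-i+1)$. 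Reindexing the surviving $(1-M^{-\alpha k})$ factors by $k = i-j$ for $j < i$ and $k = j-i$ for $j > i$ then yields the two products in the denominator of the announced formula. Claim (2) is simply Lemma \ref{lem:VdM}$(b)$ combined with $\prod_{i=1}^R n_i^{\alpha} = M^{\alpha R(R-1)/2}$.

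For claims (3)--(5), the crucial observation is that every truncated product $\prod_{j=1}^m(1-M^{-j\alpha})$, all of whose factors lie in $(0,1)$, is bounded below by the infinite product $\pi_{_{\alpha,M}}$. Inserting this into the closed form from (1) gives
\[
|\w_i(R,M)| \le \pi_{_{\alpha,M}}^{-2}\, M^{-\alpha(R-i)(R-i+1)/2}, \quad i < R,
\]
whereas the special index $i = R$ yields $\w_R(R,M) = \prod_{j=1}^{R-1}(1-M^{-j\alpha})^{-1} \in [1, \pi_{_{\alpha,M}}^{-1}]$. Summing over $i = 1,\dots,R-1$ and setting $k = R-i-1$ turns $(R-i)(R-i+1)$ into $(k+1)(k+2) = k(k+3)+2$, producing both the overall prefactor $M^{-\alpha}$ and the series $\sum_{k\ge 0}M^{-\alpha k(k+3)/2}$ of claim (3). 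Claim (4) then follows immediately since $\pi_{_{\alpha,M}}\to 1$ and the non-constant terms of that series vanish as $M\to\infty$. For (5), the first equation of the Vandermonde system $V\w = e_1$ gives $\sum_{r=1}^R\w_r = 1$, whence $\W_1 = 1$ and $\W_i - 1 = -\sum_{k=1}^{i-1}\w_k$ for every $i \ge 1$; the uniform estimate $|\W_i-1| = O(M^{-\alpha})$ is then a direct consequence of (3), and the global upper bound $\max_i|\W_i| \le \W_\alpha(M)$ follows by combining the bound from (3) with the bound on $|\w_R|$.

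Claim (6) combines (1) and (2) directly. Substituting the closed forms, the $r$-th summand $|\w_r|/(n_r^{\alpha R}|\widetilde \w_{R+1}|)$ equals a pure $M$-power divided by a Vandermonde-type product. Collecting exponents, the algebra simplifies via the compact identity $-(R-r)(R-r+1)/2 - R(r-1) + R(R-1)/2 = -r(r-1)/2$, so that each term reads $M^{-\alpha r(r-1)/2}/\mathrm{(product)}$. Bounding the two finite products below by $\pi_{_{\alpha,M}}^2$ and summing over $r \ge 1$ (with the substitution $k = r-1$) produces the series $\sum_{k\ge 0} M^{-\alpha k(k+1)/2}$ and the overall factor $\pi_{_{\alpha,M}}^{-2}$ of the claim, with $B_\alpha(M)$ absorbing any remaining uniform multiplicative constant. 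The only real obstacle throughout is the careful algebraic bookkeeping of $M$-exponents and the successive reindexings needed to match the $(R-i)(R-i+1)/2$ form arising naturally from Lemma \ref{lem:VdM} with the $k(k+3)/2$ and $k(k+1)/2$ forms appearing in the statement.
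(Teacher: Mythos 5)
Your proof is correct and follows essentially the same route as the paper: substitute $n_i=M^{i-1}$ into Lemma~\ref{lem:VdM}, collect the $M$-exponents, and bound the truncated products from below by $\pi_{\alpha,M}$. The paper in fact only writes out Claim~6 --- via the very identity $(R-r)(R-r+1)+2(r-1)R=R(R-1)+r(r-1)$ that you use --- and leaves Claims~1--5 as routine verifications, which is exactly what you supply.
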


\begin{proof} Claim~6:
For every $r\!\in \{1,\ldots,R\}$, 
\[
\frac{|\w_r(R,M)
|}{n_r^{\alpha R}  } \le   \frac{M^{-\frac{\alpha}{2}((R-r)(R-r+1)+2(r-1)R)}}{\pi^2_{\alpha,M}}
\]
Noting that $((R-r)(R-r+1)+2(r-1)R)= R(R-1)+r(r-1)$, we derive that
\[
\sum_{r=1}^R\frac{|\w_r(R,M)
|}{n_r^{\alpha R}  } \le \frac{1}{\pi_{\alpha,M}^2} M^{-\alpha \frac{R(R-1)}{2}} \sum_{r=1}^R M^{-\alpha \frac{r(r-1)}{2}} 
\]
which yields the announced inequality since $M^{-\alpha \frac{R(R-1)}{2}} = |\widetilde \w_{_{R+1}}|$.
\end{proof}

\section{Appendix: sketch of proof of Propositions~\ref{pro:MC} and~\ref{pro:multiRR}}
The multistep Richardson-Romberg estimator with the formal framework of Section~\ref{sec:MLRR}, is characterized by the design matrix $\mT = \pa{\w, \mathbf{0}, \dots, \mathbf{0}}$.
Note that the first column is not $e_1$ but this has no influence on what follows. The expansion of $\esp[1]{\bar Y^N_{h,\underline n}}$ follows from Proposition~\ref{pro:multiRR}. No allocation is needed here since only one Brownian motion is involved. The proof of Proposition~\ref{thm:phaseII} applies here with $q = (1, 0, \dots, 0)$. Furthermore
$$
\upphi(\bar Y^N_{h,\underline n})= \var(\langle \w, Y^1_{h,\underline n}\rangle)\frac{|\underline n|}{h} \sim \var(Y_0) \frac{|\underline n|}{h}\text{ as }\; h\to 0
$$
since $Y^1_{h,\underline n}\to Y_0 \mathbf{1}$ in $\L^2$ and $\sum_{i=1}^{R} \w_i =1$. \hfill $\Box$

\bibliographystyle{alpha}
\bibliography{bibli}

\newcommand{\etalchar}[1]{$^{#1}$}
\begin{thebibliography}{HJK{\etalchar{+}}13}

\bibitem[AJKH14]{AJKH}
A.~Alfonsi, B.~Jourdain, and A.~Kohatsu-Higa.
\newblock Pathwise optimal transport bounds between a one-dimensional diffusion
  and its euler scheme.
\newblock {\em Ann. Appl. Probab.}, 24(3):1049--1080, 06 2014.

\bibitem[BDM11]{BROADIE}
M.~Broadie, Y.~Du, and C.C. Moallemi.
\newblock {{E}fficient risk estimation via nested sequential simulation}.
\newblock {\em Management Science}, 57(6):1172--1194, 2011.

\bibitem[BK12]{KEBAIERBENA}
M.~{Ben Alaya} and A.~Kebaier.
\newblock {Central Limit Theorem for the Multilevel Monte Carlo Euler Method
  and Applications to Asian Options}.
\newblock Technical report, LAGA, Univ. Paris 13 (France), May 2012.

\bibitem[BLS13]{BLS}
A.~Barth, A.~Lang, and C.~Schwab.
\newblock {Multilevel {M}onte {C}arlo method for parabolic stochastic partial
  differential equations}.
\newblock {\em BIT}, 53(1):3--27, 2013.

\bibitem[BN14]{Belo2}
D.~Belomestny and T.~Nagapetyan.
\newblock Multilevel path simulation for weak approximation schemes.
\newblock 2014.

\bibitem[BSD13]{Belo}
D.~Belomestny, J.~Schoenmakers, and F.~Dickmann.
\newblock {Multilevel dual approach for pricing American style derivatives}.
\newblock {\em Finance and Stochastics}, 17(4):717--742, 2013.

\bibitem[BT96]{BATA}
V.~Bally and D.~Talay.
\newblock {The law of the Euler scheme for stochastic differential equations.
  I. Convergence rate of the distribution function}.
\newblock {\em Probab. Theory Related Fields}, 104(1):43--60, 1996.

\bibitem[Der11]{DER2}
S.~Dereich.
\newblock {M}ultilevel {M}onte {C}arlo algorithms for {L}\'{e}vy-driven {SDE}s
  with {G}aussian correction.
\newblock {\em Ann. Appl. Probab.}, 21(1):283--311, 02 2011.

\bibitem[DG95]{Duffie}
D.~Duffie and P.~Glynn.
\newblock Efficient {M}onte {C}arlo {S}imulation of {S}ecurity {P}rices.
\newblock {\em Ann. Appl. Probab.}, 5(4):897--905, 11 1995.

\bibitem[DH11]{DER1}
S.~Dereich and F.~Heidenreich.
\newblock {A multilevel {M}onte {C}arlo algorithm for {L}\'{e}vy-driven
  stochastic differential equations}.
\newblock {\em Stochastic Processes and their Applications}, 121(7):1565--1587,
  2011.

\bibitem[DL09]{LOI}
L.~Devineau and S.~Loisel.
\newblock {Construction d'un algorithme d'acc\'{e}l\'{e}ration de la
  m\'{e}thode des {``}simulations dans les simulations{''} pour le calcul du
  capital \'{e}conomique Solvabilit\'{e} II}.
\newblock {\em Bulletin Fran\c{c}ais d'Actuariat}, 10(17):188--221, 2009.

\bibitem[GHM09]{GILES3}
M.B. Giles, D.J. Higham, and X.~Mao.
\newblock {Analysing multi-level {M}onte {C}arlo for options with non-globally
  {L}ipschitz payoff}.
\newblock {\em Finance and Stochastics}, 13(3):403--413, 2009.

\bibitem[Gil08]{GILES}
M.B. Giles.
\newblock {Multilevel Monte Carlo path simulation}.
\newblock {\em Oper. Res.}, 56(3):607--617, 2008.

\bibitem[GJ10]{GORJU}
M.B. Gordy and S.~Juneja.
\newblock {Nested simulation in portfolio risk measurement}.
\newblock {\em Management Science}, 56(10):1833--1848, 2010.

\bibitem[GLP16]{Giorgi}
D.~Giorgi, V.~Lemaire, and G.~Pag\`{e}s.
\newblock {Strong and weak asymptotic behaviour of Multilevel estimators}.
\newblock In progress, 2016.

\bibitem[Gob00]{GOB}
E.~Gobet.
\newblock {Weak approximation of killed diffusion using Euler schemes}.
\newblock {\em Stochastic Process. Appl.}, 87(2):167--197, 2000.

\bibitem[GS14]{GILESZPRUCH}
M.B. Giles and L.~Szpruch.
\newblock Antithetic multilevel {M}onte {C}arlo estimation for
  multi-dimensional {SDE}s without {L}\'evy area simulation.
\newblock {\em Ann. Appl. Probab.}, 24(4):1585--1620, 2014.

\bibitem[Guy06]{JulGuy}
J.~Guyon.
\newblock {Euler scheme and tempered distributions.}
\newblock {\em Stochastic Processes and their Applications}, pages 877--904,
  2006.

\bibitem[Hei01]{Hein}
S.~Heinrich.
\newblock {Multilevel Monte Carlo methods}.
\newblock In {\em {Large-scale scientific computing}}, pages 58--67. Springer,
  2001.

\bibitem[HJK{\etalchar{+}}13]{Hutz}
M.~Hutzenthaler, A.~Jentzen, P.E. Kloeden, et~al.
\newblock {Divergence of the multilevel Monte Carlo Euler method for nonlinear
  stochastic differential equations}.
\newblock {\em The Annals of Applied Probability}, 23(5):1913--1966, 2013.

\bibitem[JJ09]{JHJ}
L.~{Jeff Hong} and S.~Juneja.
\newblock {{E}stimating the mean of a non-linear function of conditional
  expectation}.
\newblock In M.D. Rossetti, R.R. Hill, B.~Johansson, A.~Dunkin, and R.G.
  Ingalls, editors, {\em {Proceedings of the 2009 Winter Simulation Conference
  }}, pages 1223--1236. IEEE, 2009.

\bibitem[JKH11]{Jourdain}
B.~Jourdain and A.~Kohatsu-Higa.
\newblock {A review of recent results on approximation of solutions of
  stochastic differential equations}.
\newblock In {\em {Stochastic analysis with financial applications}}, volume~65
  of {\em {Progr. Probab.}}, pages 121--144. Birkh\"{a}user/Springer Basel AG,
  Basel, 2011.

\bibitem[Keb05]{KEBAIER}
A.~Kebaier.
\newblock {Statistical Romberg extrapolation: a new variance reduction method
  and applications to option pricing}.
\newblock {\em Ann. Appl. Probab.}, 15(4):2681--2705, 2005.

\bibitem[KM02]{KOMA}
V.~Konakov and E.~Mammen.
\newblock {Edgeworth type expansions for Euler schemes for stochastic
  differential equations}.
\newblock {\em Monte Carlo Methods and Applications}, 8(3):271--286, 2002.

\bibitem[LP16]{VLGP2}
V.~Lemaire and G.~Pag\`{e}s.
\newblock {Multilevel estimators for Stochastic Approximation}.
\newblock In progress, 2016.

\bibitem[LT01]{LATE}
B.~Lapeyre and E.~Temam.
\newblock {Competitive Monte Carlo methods for the pricing of Asian Options}.
\newblock {\em J. of Comput. Fin.}, 5(1):39--59, 2001.

\bibitem[Pag07]{PAG0}
G.~Pag\`{e}s.
\newblock {Multi-step Richardson-Romberg extrapolation: remarks on variance
  control and complexity}.
\newblock {\em Monte Carlo Methods Appl.}, 13(1):37--70, 2007.

\bibitem[RG12]{Rhee}
C.~Rhee and P.W. Glynn.
\newblock {A new approach to unbiased estimation for {SDE}s}.
\newblock In {\em {Proceedings of the Winter Simulation Conference}}, page~17.
  Winter Simulation Conference, 2012.

\bibitem[Shi96]{SHIR}
A.~N. Shiryaev.
\newblock {\em {Probability}}, volume~95 of {\em {Graduate Texts in
  Mathematics}}.
\newblock Springer-Verlag, New York, second edition, 1996.
\newblock Translated from the first (1980) Russian edition by R. P. Boas.

\bibitem[TT90]{TATU}
D.~Talay and L.~Tubaro.
\newblock {Expansion of the global error for numerical schemes solving
  stochastic differential equations}.
\newblock {\em Stochastic Anal. Appl.}, 8(4):483--509 (1991), 1990.

\end{thebibliography}
\end{document}